\newtheorem{theo}{Theorem}
\newtheorem{lemm}[theo]{Lemma}
\newtheorem{coro}[theo]{Corollary}
\newtheorem{prop}[theo]{Proposition}
\newtheorem*{them}{Theorem}
\title{On cobrackets on the Wilson loops associated with flat $\mathrm{GL}(1, \mathbb{R})$-bundles over surfaces}
\author{MOEKA NOBUTA}
\date{}
\begin{document}
\maketitle
\begin{abstract}
Let $S$ be a closed connected oriented surface of genus $g>0$. We study a Poisson subalgebra $W_1(g)$ of $C^{\infty}(\mathrm{Hom}(\pi_1(S), \mathrm{GL}(1, \mathbb{R}))/\mathrm{GL}(1, \mathbb{R}))$, the smooth functions on the moduli space of flat $\mathrm{GL}(1, \mathbb{R})$-bundles over $S$. There is a surjective Lie algebra homomorphism from the Goldman Lie algebra onto $W_1(g)$.
We classify all cobrackets on $W_1(g)$ up to coboundary, that is, we compute $H^1(W_1(g), W_1(g)\wedge W_1(g))\cong \mathrm{Hom}(\mathbb{Z}^{2g}, \mathbb{R}).$
As a result, there is no cohomology class corresponding to the Turaev cobracket on $W_1(g)$.
\end{abstract}
\begin{section}{Introduction}

Atiyah and Bott \cite{AB83} constructed a symplectic structure on $\mathrm{Hom}(\pi_1(S), G)/G$, the moduli space of flat $G$-bundles over a closed connected oriented surface $S$, where $G$ is a compact quadratic Lie group, that is, a compact Lie group together with a non-degenerate $\mathrm{Ad}(G)$-invariant symmetric bilinear form on the corresponding Lie algebra.
Goldman \cite{Gol84} extended this construction to any non-compact quadratic Lie group $G$.
By using the symplectic structure, we define the structure of a Poisson algebra on the smooth functions $C^{\infty}(\mathrm{Hom}(\pi_1(S), G)/G)$.
Let $\hat\pi$ be the set of free homotopy classes of free loops on $S$ and $\mathbb{Z}[\hat\pi]$ the free $\mathbb{Z}$-module generated by $\hat\pi$.
Goldman \cite{Gol86} constructed a Lie algebra structure on $\mathbb{Z}[\hat\pi]$ and established formulas which connect the Lie algebra $\mathbb{Z}[\hat\pi]$ and the Poisson algebra $C^{\infty}(\mathrm{Hom}(\pi_1(S), G)/G)$. 

Since the constant loop $1$ is in the center of $\mathbb{Z}[\hat\pi]$, the quotient $\mathbb{Z}[\hat\pi]_0:=\mathbb{Z}[\hat\pi]/\mathbb{Z}1$ has a Lie algebra structure.
Turaev \cite{Tur91} constructed a cobracket on $\mathbb{Z}[\hat\pi]_0$ which is compatible with the Lie bracket.
As an application of Turaev cobracket, Chas and Krongold \cite{CK15} gives a characterization of simple closed curves. 
Alekseev, Kawazumi, Kuno and Naef \cite{AKKN} introduced higher genus Kashiwara-Vergne problems in view of the formality problem of the Turaev cobracket.

In the case $G=\mathrm{GL}(n, \mathbb{R})$, we consider a non-degenerate $\mathrm{Ad}(G)$-invariant symmetric bilinear form
\begin{align*}
\mathrm{Tr}:\mathfrak{gl}(n, \mathbb{R})\times\mathfrak{gl}(n, \mathbb{R})\rightarrow\mathbb{R},
(A, B)\mapsto\mathrm{Tr}(AB).
\end{align*}
Let $W_n(g)$ be the Lie subalgebra of $C^{\infty}(\mathrm{Hom}(\pi_1(S), \mathrm{GL}(n, \mathbb{R}))/\mathrm{GL}(n, \mathbb{R}))$ generated by all Wilson loops, where $g$ is the genus of the surface $S$.
For $[\gamma]\in\hat\pi$ represented by $\gamma\in\pi_1(S)$, the Wilson loop associated with $[\gamma]$ is given by
\begin{align*}
\omega_{[\gamma]}:\mathrm{Hom}(\pi_1(S), \mathrm{GL}(n, \mathbb{R}))\rightarrow\mathbb{R},\
\rho\mapsto\omega_{[\gamma]}:=\mathrm{Tr}(\rho(\gamma)).
\end{align*}
This induces a Lie algebra homomorphism \cite{Gol86}
\begin{align*}
\mathbb{Z}[\hat\pi]_0\subset\mathbb{Z}[\hat\pi]\rightarrow W_n(g)\subset C^{\infty}(\mathrm{Hom}(\pi_1(S), \mathrm{GL}(n, \mathbb{R}))/\mathrm{GL}(n, \mathbb{R})),
[\gamma]\mapsto\omega_{[\gamma]}.
\end{align*}
So it is natural to ask whether there exists a cohomology class corresponding to the Turaev cobeacket on $W_n(g)$ and $C^{\infty}(\mathrm{Hom}(\pi_1(S), \mathrm{GL}(n, \mathbb{R}))/\mathrm{GL}(n, \mathbb{R}))$.

In this paper we discuss about cobrackets on $W_1(g)$ which are compatible with the bracket and classify them up to coboundaries.
When $n=1$, $W_1(g)$ is identified with polynomials $\mathbb{R}[x_1, y_1, \cdots, x_g, y_g, x_1^{-1}, y_1^{-1}, \cdots, x_g^{-1}, y_g^{-1}]$, where $(x_1, y_1, \cdots, x_g, y_g)$ is a symplectic generators of $\pi_1(S, *)$.
So $W_1(g)$ is easier than other $W_n(g)$'s but there are non-trivial cobrackets on $W_1(g)$.

We obtain following isomorphism.
\begin{them}
\begin{align*}
\mathrm{Hom}(\mathbb{Z}^{2g}, \mathbb{R})\cong H^1(W_1(g), W_1(g)\wedge W_1(g)),\
k\mapsto [\Delta_k],
\end{align*}
where
$\Delta_k:W_1(g)\rightarrow W_1(g)\wedge W_1(g)$ is given by
$x_1^{a_1}y_1^{b_1}\cdots x_g^{a_g}y_g^{b_g}\in\pi_1(S)^{\mathrm{Ab}}\mapsto k(a_1, b_1, \cdots, a_g, b_g)x_1^{a_1}y_1^{b_1}\cdots x_g^{a_g}y_g^{b_g}\wedge 1$.
\end{them}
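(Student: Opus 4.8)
The plan is to compute directly in the explicit model $W_{1}(g)\cong\mathbb{R}[\mathbb{Z}^{2g}]$ recalled above. Write $X^{\alpha}$ for the monomial $x_{1}^{a_{1}}y_{1}^{b_{1}}\cdots x_{g}^{a_{g}}y_{g}^{b_{g}}$ attached to $\alpha=(a_{1},b_{1},\dots,a_{g},b_{g})\in\mathbb{Z}^{2g}\cong\pi_{1}(S)^{\mathrm{Ab}}$; in this model the Poisson bracket has the form $[X^{\alpha},X^{\beta}]=\langle\alpha,\beta\rangle\,X^{\alpha+\beta}$ (as follows from Goldman's formulas), where $\langle\alpha,\beta\rangle=\sum_{i}(a_{i}d_{i}-b_{i}c_{i})$ is the intersection form and $X^{0}=1$ is central. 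Put $L:=W_{1}(g)$ and let $L$ act on $L\wedge L$ by $\xi\cdot(u\wedge v)=[\xi,u]\wedge v+u\wedge[\xi,v]$, so $H^{1}(L,L\wedge L)=Z^{1}/B^{1}$ with $Z^{1}=\{\delta\colon L\to L\wedge L\ \text{linear}\mid\delta([\xi,\eta])=\xi\cdot\delta(\eta)-\eta\cdot\delta(\xi)\}$ and $B^{1}=\{\delta_{v}\mid v\in L\wedge L\}$, $\delta_{v}(\xi)=\xi\cdot v$. A one-line check gives $\Delta_{k}\in Z^{1}$ and shows $k\mapsto\Delta_{k}$ is $\mathbb{R}$-linear, so the task is to prove that $k\mapsto[\Delta_{k}]$ is a bijection $\mathrm{Hom}(\mathbb{Z}^{2g},\mathbb{R})\to H^{1}(L,L\wedge L)$.

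The engine of the proof is the $\mathbb{Z}^{2g}$-weight grading $L=\bigoplus_{\alpha}\mathbb{R}X^{\alpha}$ and $L\wedge L=\bigoplus_{\gamma}(L\wedge L)_{\gamma}$, with $(L\wedge L)_{\gamma}$ spanned by the $X^{\beta}\wedge X^{\gamma-\beta}$; the action and the Chevalley--Eilenberg differential both preserve weights. A cochain then has weight-homogeneous components $\delta_{w}$, $\delta_{w}(X^{\alpha})\in(L\wedge L)_{\alpha+w}$, and I will reduce the computation of $H^{1}$ to the weight-$w$ cohomologies $H^{1}_{w}$ (some care is needed in reassembling, since coboundaries of distinct weights can coexist; this is handled by tracking supports together with the vanishing in (i) below). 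Two preliminaries come first: (a) $(L\wedge L)^{L}=0$ in every weight, by a maximal-norm/support argument; and (b) reading the cocycle identity with $\eta=1$ forces $\delta(1)$ to be $L$-invariant, so (a) gives $\delta(1)=0$ for every cocycle, which anchors the inductions. It then remains to show (i) $H^{1}_{w}=0$ for $w\neq0$, and (ii) $H^{1}_{0}\cong\mathrm{Hom}(\mathbb{Z}^{2g},\mathbb{R})$ via $k\mapsto[\Delta_{k}]$ (note the $\Delta_{k}$ have weight $0$, since $X^{\alpha}\wedge1$ has weight $\alpha$).

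For (i), fix $w\neq0$ and a weight-$w$ cocycle $\delta$, and set $\phi_{\alpha}:=\delta(X^{\alpha})\in(L\wedge L)_{\alpha+w}$, so $\langle\alpha,\beta\rangle\,\phi_{\alpha+\beta}=X^{\alpha}\cdot\phi_{\beta}-X^{\beta}\cdot\phi_{\alpha}$ with $\phi_{0}=0$. Reading this with $\beta$ a basis direction $e$ having $\langle\alpha,e\rangle\neq0$ expresses $\phi_{\alpha+e}$ through $\phi_{\alpha}$ and $\phi_{e}$; iterating over enough such shifts (adding a few ``diagonal'' directions to the standard symplectic basis, since the standard planes are mutually $\langle\cdot,\cdot\rangle$-orthogonal) reconstructs all $\phi_{\alpha}$ from finitely many $\phi_{e}$, and comparing with the coboundary $\delta_{v}$ assembled from that same finite data yields $\delta=\delta_{v}$. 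The \textbf{main obstacle} is exactly this bookkeeping: one must accommodate the degenerate directions where a symplectic pairing vanishes, the indices $\beta=\gamma-\beta$ at which $(L\wedge L)_{\gamma}$ loses a basis vector, and the path-independence of the reconstruction. I expect to organize it as an induction on $|\alpha|=\sum_{i}(|a_{i}|+|b_{i}|)$, first subtracting an explicit coboundary to normalize $\delta$ and then showing the normalized cocycle vanishes.

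For (ii), take $w=0$ and write $\phi_{\alpha}=\lambda(\alpha)\,X^{\alpha}\wedge1+\psi_{\alpha}$, where $\psi_{\alpha}$ is a combination of terms $X^{\beta}\wedge X^{\alpha-\beta}$ with $\beta\neq0\neq\alpha-\beta$ and $\lambda\colon\mathbb{Z}^{2g}\to\mathbb{R}$. Subtracting a coboundary, chosen by the same norm induction (with base point $\phi_{0}=0$) to cancel the $\psi$-part, reduces to $\psi_{\alpha}\equiv0$; the $X^{\alpha+\beta}\wedge1$-component of the cocycle identity then becomes $\langle\alpha,\beta\rangle\,\lambda(\alpha+\beta)=\langle\alpha,\beta\rangle\big(\lambda(\alpha)+\lambda(\beta)\big)$, so $\lambda$ is additive on every pair with $\langle\alpha,\beta\rangle\neq0$; as such pairs generate $\mathbb{Z}^{2g}$, $\lambda\in\mathrm{Hom}(\mathbb{Z}^{2g},\mathbb{R})$ and $\delta=\Delta_{\lambda}$. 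Finally $k\mapsto[\Delta_{k}]$ is injective by a support/degree argument: if $\Delta_{k}=\delta_{v}$ for a finite $v\in(L\wedge L)_{0}$, then for all but finitely many $\alpha$ the bivector $X^{\alpha}\cdot v$ has no $X^{\alpha}\wedge1$-component, forcing $k(\alpha)=0$ there and hence $k=0$. Combining (i), (ii) and the reassembly step gives $H^{1}(W_{1}(g),W_{1}(g)\wedge W_{1}(g))\cong\mathrm{Hom}(\mathbb{Z}^{2g},\mathbb{R})$, $k\mapsto[\Delta_{k}]$.
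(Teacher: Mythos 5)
Your setup is sound and in fact organizes the computation more transparently than the paper does: the bracket $[X^{\alpha},X^{\beta}]=\langle\alpha,\beta\rangle X^{\alpha+\beta}$ is $\mathbb{Z}^{2g}$-graded, cocycles split into weight components, $(L\wedge L)^{L}=0$ forces $\delta(1)=0$, and in weight $0$ the $X^{\alpha+\beta}\wedge 1$-component of the cocycle identity gives additivity of $\lambda$ on non-orthogonal pairs. (Note that even this last step is not a one-liner: passing from ``additive whenever $\langle\alpha,\beta\rangle\neq 0$'' to ``a homomorphism'' is the paper's Lemma~\ref{lem:k0hom}, proved by an induction that chains through carefully chosen intermediate lattice points; ``such pairs generate $\mathbb{Z}^{2g}$'' is not by itself a proof.) The injectivity argument via supports is fine and matches the paper's.

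The genuine gap is that the entire analytic content of the theorem sits inside the two steps you defer: (i) $H^{1}_{w}=0$ for $w\neq 0$, and the claim in (ii) that a coboundary can be subtracted to kill the $\psi$-part. What you call ``bookkeeping'' is not bookkeeping. The relation $\langle\alpha,e\rangle\phi_{\alpha+e}=X^{\alpha}\cdot\phi_{e}-X^{e}\cdot\phi_{\alpha}$ shows that a cocycle is \emph{determined} by the finitely many values $\phi_{e}$ on generators, but it does not produce a single element $v\in(L\wedge L)_{w}$ with $X^{e}\cdot v=\phi_{e}$ for all $2g$ generators simultaneously; that existence is a nontrivial compatibility statement among the $\phi_{e}$, and proving it is exactly where the work lies. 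In the paper this occupies Proposition~\ref{thm0} and Lemmas~\ref{lem:y0}--\ref{lem:12}: one introduces lexicographic orders on monomials $u\otimes v$, analyzes the maximal term of $\Delta(x^{a}y^{b})$, rules out the cases $|i(u_{0},y)|>1$, $i(u_{0},y)=0$, $i(u_{0},y)=-1$ by asymptotic comparison of the two sides of the cocycle identity for $b'=\lambda b$, $b\to\infty$, and then runs a multi-stage induction over the genus. None of this is routine, and your outline gives no mechanism that would replace it. Until step (i) and the normalization in (ii) are actually carried out, the proposal establishes only the easy half of the isomorphism (well-definedness and injectivity of $k\mapsto[\Delta_{k}]$), not surjectivity.
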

$\Delta_k$ is a cobracket compatible with the bracket. 
If we consider a compact surface with non-empty boundary instead of the closed surface $S$, then there is a framed Turaev cobracket $\delta^f$ on the compact surface associated with a framing $f$ of the tangent bundle \cite{AKKN}.
Since $\delta^{f+\chi}(\alpha)=\delta^f(\alpha)+\chi(\alpha)1\wedge\alpha$ for any first cohomology class $\chi$ of the surface, $\Delta_k$ corresponds to the change of framing of Turaev cobracket. But there is no cohomology class corresponding to the framed Turaev cobrackets on $W_1(g)$ which represents a non-trivial cohomology class in $H^1(\mathbb{Z}[\hat\pi]_0, \mathbb{Z}[\hat\pi]_0\wedge\mathbb{Z}[\hat\pi]_0)$.
We show the non-triviality in Section 3.

\subsection*{Acknowledgements}
The author would like to thank her advisor, Nariya Kawazumi for his dedicated support and continuous encouragement.

\end{section}

\begin{section}{Cobrackets on $W_1(g)$}
As was pointed out by Drinfel'd \cite{Dri}, the compatible condition for cobrackets is equivalent to the cocycle condition for $1$-cochains.
So we may consider cobrackets as $1$-cocycles and are led to compute the first cohomology group.
We show the map
\[
\mathrm{Hom}(\mathbb{Z}^{2g}, \mathbb{R})\rightarrow H^1(W_1(g), W_1(g)\wedge W_1(g)), k\mapsto [\Delta_k]
\]
is injective in section \ref{section1} and surjective in section \ref{section2}. 

Let $(\mathfrak{g}, [\ ,\ ])$ be a Lie algebra  over the field of real numbers $\mathbb{R}$, and $M$ a left $\mathfrak{g}$-module.
$Z^1(\mathfrak{g}, M)$ denotes the set of all linear functions $f:\mathfrak{g}\rightarrow M$ satisfying
$\gamma_0\cdot f(\gamma_1)-\gamma_1\cdot f(\gamma_0)-f([\gamma_0, \gamma_1])=0$
and $B^1(\mathfrak{g}, M)$ the image of $M$ under $d:M\rightarrow Z^1(\mathfrak{g}, M)$, where $d(m)(\gamma)=\gamma\cdot m$ for $m\in M$ and $\gamma\in\mathfrak{g}$.
The first cohomology group of the Lie algebra $\mathfrak{g}$ with coefficients in $M$ is defined by 
$H^1(\mathfrak{g}, M)=Z^1(\mathfrak{g}, M)/B^1(\mathfrak{g}, M)$.

The products $W_1(g)\wedge W_1(g)$ and $W_1(g)\otimes W_1(g)$ are $W_1(g)$-modules given by
$Z_0\cdot (Z_1\wedge Z_2)= \{Z_0, Z_1\}\wedge Z_2+Z_1\wedge \{Z_0, Z_2\}$ and
$Z_0\cdot (Z_1\otimes Z_2)= \{Z_0, Z_1\}\otimes Z_2+Z_1\otimes \{Z_0, Z_2\}$.

If $\Delta:W_1(g)\rightarrow W_1(g)\wedge W_1(g)$ is compatible with the bracket $\{\ ,\ \}$, then $\Delta\in Z^1(W_1(g), W_1(g)\wedge W_1(g))$, that is,
\begin{align*}
&Z_0\cdot \Delta(Z_1)-Z_1\cdot \Delta(Z_0)-\Delta(\{Z_0, Z_1\})\\
&=\{Z_0, \Delta(Z_1)\}+\{\Delta(Z_0), Z_1\}-\Delta(\{Z_0, Z_1\})=0,
\end{align*}
where
$\{Z, u\wedge v\}=\{Z, u\}\wedge v+u\wedge\{Z, v\}$ and $\{u\wedge v, Z\}=-\{Z, u\wedge v\}$ for $u, v\in W_1(g)$. 
We denote by $[\Delta]$ the cohomology class of $\Delta$.

If $n=1$, the Wilson loop associated with $[\gamma]\in\hat\pi$ can be identified with the image of $\gamma$ under the natural surjection $\pi_1(S)\rightarrow\pi_1(S)^{\mathrm{Ab}}$.
Fix symplectic generators $(x_1, y_1, \cdots ,x_g, y_g)$ of $\pi_1(S)=\pi_1(S, *)$ as shown in Figure \ref{fig:closed_surface}.

\begin{figure}
  \begin{center}
    \includegraphics[width=8cm]{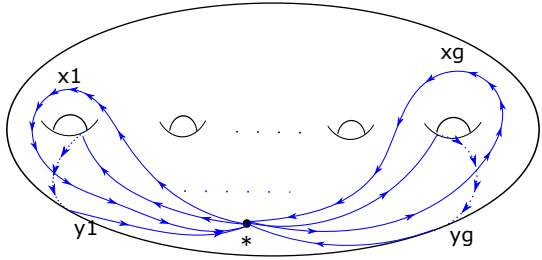}
    \caption{A surface $S$ with symplectic generators of $\pi_1(S)$.}
    \label{fig:closed_surface}
 \end{center}
\end{figure}

Then we have a $\mathbb{R}$-module isomorphism $W_1(g)\cong\mathbb{R}\pi_1(S)^{\mathrm{Ab}}=\mathbb{R}[x_1, y_1, \cdots, x_g, y_g, x_1^{-1}, y_1^{-1}, \cdots, x_g^{-1}, y_g^{-1}]$.

The bracket on $W_1(g)$ is defined by
$\{Z_1, Z_2\}=i(Z_1, Z_2)Z_1Z_2$
for $Z_1$ and $Z_2\in\pi_1(S)^{\mathrm{Ab}}$, where $i$ is the algebraic intersection form, that is,\\ 
$i(x_1^{a_1}y_1^{y_1}\cdots x_g^{a_g}y_g^{b_g}, x_1^{a'_1}y_1^{b'_1}\cdots x_g^{a'_g}y_g^{b'_g})=a_1b'_1-a'_1b_1+\cdots +a_gb'_g-a'_gb_g.$

\begin{subsection}{Compatibility on $W_1(g)$}\label{section1}
In this section we show the map $\mathrm{Hom}(\mathbb{Z}^{2g}, \mathbb{R})\rightarrow H^1(W_1(g), W_1(g)\wedge W_1(g))$ is injective and for $\Delta\in Z^1(W_1(g), W_1(g)\wedge W_1(g))$, there exist some homomorphism $k\in\mathrm{Hom}(\mathbb{Z}^{2g}, \mathbb{R})$ and some element $\alpha\in W_1(g)\wedge 1$ such that $(\Delta-(\Delta_k+d\alpha))(Z)\in W_1(g)'\wedge W_1(g)'$ for all $Z\in W_1(g)$ where 
\[
W_1(g)':=\mathbb{R}\{\omega_{\gamma}\ |\ \gamma\neq 1\in\pi_1(S)^{\mathrm{Ab}}\}\subset W_1(g),
\] 
the submodule generated by all Wilson loops except for the Wilson loop associated with trivial loop.
We have decompositions of $W_1(g)$-modules
$W_1(g)\wedge W_1(g) =(W_1(g)'\wedge 1)\oplus(W_1(g)'\wedge W_1(g)') $ and $W_1(g)\otimes W_1(g)=\mathbb{R}(1\otimes 1)\oplus(W_1(g)'\otimes 1)\oplus(1\otimes W_1(g)')\oplus(W_1(g)'\otimes W_1(g)')$.

For a map $k:\mathbb{Z}^{2g}\rightarrow\mathbb{R}$, define a linear map
$\Delta_k:W_1(g)\rightarrow W_1(g)\wedge W_1(g)$ by
\begin{align*}
\Delta_k(x_1^{a_1}y_1^{b_1}\cdots x_g^{a_g}y_g^{b_g})=k(a_1, b_1, \cdots, a_g, b_g)x_1^{a_1}y_1^{b_1}\cdots x_g^{a_g}y_g^{b_g}\wedge 1.
\end{align*}
If $\Delta_k$ is compatible with the bracket $\{\ ,\ \}$, then $\Delta_k$ can be seen as a $1$-cocycle of $W_1(g)$ with values in $W_1(g)\wedge W_1(g)$.

\begin{lemm}\label{lem:k0hom}
Suppose a map $k:\mathbb{Z}^{2g}\rightarrow \mathbb{R}$ satisfies the condition $k(0, \cdots , 0)=0$ and
$k(a_1+a'_1, b_1+b'_1, \cdots , a_g+a'_g, b_g+b'_g)=k(a_1, b_1, \cdots , a_g, b_g)+k(a'_1, b'_1, \cdots , a'_g, b'_g)$
for $a_1b'_1-b_1a'_1+\cdots +a_g b'_g-b_g a'_g \neq 0$. Then $k$ is a homomorphism.
\end{lemm}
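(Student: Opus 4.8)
The plan is to upgrade the partial additivity hypothesis — which only gives $k(u+v)=k(u)+k(v)$ when the symplectic form $\langle u,v\rangle := a_1b_1'-b_1a_1'+\cdots+a_gb_g'-b_ga_g'$ is nonzero — to full additivity on all of $\mathbb{Z}^{2g}$, and then invoke $k(0)=0$ together with additivity to conclude that $k$ is a group homomorphism. Write $Q$ for the symplectic form on $\mathbb{Z}^{2g}$. The difficulty is concentrated entirely in the \emph{degenerate} pairs, i.e.\ pairs $u,v$ with $Q(u,v)=0$; for those the hypothesis says nothing directly and we must deduce additivity by a detour through vectors that pair nontrivially.

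First I would record the easy structural facts. Taking $v$ with $Q(u,v)\neq 0$ and applying the hypothesis to $u$ and $v$, to $u$ and $-v$ (note $Q(u,-v)\neq 0$), and to $v$ and $-v$ is not available since $Q(v,-v)=0$; instead I would get $k(u-v)=k(u)+k(-v)$ from $Q(u,-v)\ne 0$, and separately I want $k(-v)=-k(v)$. To obtain $k(-w)=-k(w)$ for \emph{every} $w$: if $w$ is not isotropic-obstructed, pick $v$ with $Q(w,v)\ne 0$; then also $Q(-w,v)\ne 0$ and $Q(w-(-w)\cdot 0,\dots)$ — more cleanly, apply the hypothesis to the pair $(w, v)$ and to $(-w, v)$ and to $(w, v-2w)$? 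This is getting ad hoc; the clean route is: for any $w\neq 0$ there exists $v$ with $Q(w,v)\ne 0$, and then $Q(w, v)\ne 0$ and $Q(-w,v)\ne 0$, so $k(w+v)=k(w)+k(v)$ and, applying the hypothesis to $-w$ and $w+v$ (whose pairing is $Q(-w,w+v)=-Q(w,v)\ne 0$), $k(v)=k(-w+(w+v))=k(-w)+k(w+v)=k(-w)+k(w)+k(v)$, hence $k(-w)=-k(w)$; for $w=0$ this is $k(0)=0$. So $k$ is odd.

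Next, additivity for a degenerate pair $u,v$ with $Q(u,v)=0$ and $u,v\neq 0$: choose an auxiliary $t\in\mathbb{Z}^{2g}$ with $Q(u,t)\neq 0$ and $Q(v,t)\neq 0$ and also $Q(u+v,t)=Q(u,t)+Q(v,t)\neq 0$ — such a $t$ exists because each of these three conditions excludes $t$ from a proper sublattice (a union of three proper subgroups of $\mathbb{Z}^{2g}$, $g\ge 1$, cannot be everything, since over $\mathbb{Q}$ a vector space is not a finite union of proper subspaces). Then compute $k(u+v+t)$ two ways: on one hand $Q(u, v+t)=Q(u,t)\ne 0$ gives $k(u+v+t)=k(u)+k(v+t)$, and $Q(v,t)\ne 0$ gives $k(v+t)=k(v)+k(t)$, so $k(u+v+t)=k(u)+k(v)+k(t)$. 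On the other hand $Q(u+v,t)\ne 0$ gives $k(u+v+t)=k(u+v)+k(t)$. Cancelling $k(t)$ yields $k(u+v)=k(u)+k(v)$. Combined with the hypothesis for nondegenerate pairs and the cases where $u$ or $v$ is $0$ (trivial, using $k(0)=0$), this establishes $k(u+v)=k(u)+k(v)$ for all $u,v\in\mathbb{Z}^{2g}$, so $k$ is a homomorphism.

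The main obstacle is the existence of the auxiliary vector $t$ in the degenerate case: one must be sure that the symplectic form on $\mathbb{Z}^{2g}$ with $g\geq 1$ admits, for any two nonzero vectors $u,v$, a single $t$ pairing nontrivially with $u$, with $v$, and with $u+v$ simultaneously. I expect this to follow from the non-degeneracy of the form (so $\{t: Q(u,t)=0\}$, $\{t: Q(v,t)=0\}$, $\{t: Q(u+v,t)=0\}$ are each proper subgroups of finite corank) plus the elementary fact that a lattice (equivalently, a $\mathbb{Q}$-vector space) is not the union of finitely many proper subgroups; the only case needing a moment's care is when $u+v=0$, but then the claim $k(u+v)=k(u)+k(v)$ is just $0=k(u)+k(-u)$, already handled by oddness. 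Everything else is bookkeeping with the cocycle-free elementary identity.
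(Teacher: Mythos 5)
Your proof is correct, and it takes a genuinely different route from the paper. The paper proves the lemma by induction on $g$: for $g=1$ it telescopes $k(a,b)$ down to $ak(1,0)+bk(0,1)$ through a chain of explicitly chosen nondegenerate pairs, and for $g\geq 2$ it splits off the last symplectic pair of coordinates and runs a further explicit computation to identify the cross term $k(0,\dots,0,1,0,1)$. You instead use the classical ``auxiliary vector'' perturbation: for a degenerate pair $(u,v)$ with $u$, $v$, $u+v$ all nonzero, you pick $t$ pairing nontrivially with all three (possible because the three conditions each cut out a proper subgroup --- the kernel of a nonzero functional, hence of infinite index --- and $\mathbb{Z}^{2g}$, or $\mathbb{Q}^{2g}$ after tensoring and rescaling, is not a finite union of such subgroups), evaluate $k(u+v+t)$ two ways, and cancel $k(t)$; the case $u+v=0$ is handled by the oddness $k(-w)=-k(w)$, which you derive correctly from $Q(-w,w+v)=-Q(w,v)\neq 0$. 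All cases (one argument zero, nondegenerate pair, degenerate pair with nonzero sum, degenerate pair with zero sum) are covered, so full additivity and hence the homomorphism property follow. Your argument is shorter, coordinate-free, avoids the induction on $g$ entirely, and would work verbatim for any nondegenerate bilinear pairing on a finite-rank lattice; the paper's computation, by contrast, is fully explicit and exhibits the formula $k=\sum a_ik(e_{2i-1})+b_ik(e_{2i})$ along the way, which it reuses in the surjectivity argument of Theorem 3. The only cosmetic issue is the false start in your first paragraph before you settle on the clean derivation of oddness; the final version of that step is sound.
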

\begin{proof}

We will show the statement by induction on $g>0$.
In the case $g=1$, suppose $k(0, 0)=0$ and $k(a+a', b+b')=k(a, b)+k(a', b')$ for $ab'-ba'\neq 0$.
For $a>0, b>0$, we have
\begin{eqnarray*}
k(a, b)&=& k(a, b-1)+k(0, 1)= \cdots  = k(a, 1)+(b-1)k(0, 1)\\
&=& k(a-1, 1)+k(1, 0)+(b-1)k(0, 1)\\
&=& \cdots =k(0, 1)+ak(1, 0)+(b-1)k(0, 1)= ak(1, 0)+bk(0, 1).
\end{eqnarray*}
Since $k(a, 0)+k(0, 1)=k(a, 1)=ak(1, 0)+k(0, 1)$, we obtain $k(a, 0)=ak(1, 0)$.
Similarly, we have $k(0, b)=bk(0, 1)$.
Since $k(a, b)+k(a, -b)=k(2a, 0)$,
\begin{eqnarray*}
k(a, -b)&=&k(2a, 0)-k(a, b)=2ak(1, 0)-(ak(1, 0)+bk(0, 1))\\
&=&ak(1, 0)-bk(0, 1).
\end{eqnarray*} 
Similarly we have
$k(-a, b)=-ak(1, 0)+bk(0, 1)$, $k(-a, 0)=-ak(1, 0)$, 
$k(0, -b)=-bk(0, 1)$, and $k(-a, -b)= -ak(1, 0)-bk(0, 1)$.
By the condition $k(0, 0)=0$, $k$ is a homomorphism.

For $g\geq 2$, suppose the above statement holds true for $g-1$, $k(0, \cdots, 0)=0$ and 
$k(a_1+a'_1, b_1+b'_1, \cdots , a_g+a'_g, b_g+b'_g)=k(a_1, b_1, \cdots , a_g, b_g)+k(a'_1, b'_1, \cdots , a'_g, b'_g)$
for $a_1b'_1-b_1a'_1+\cdots +a_g b'_g-b_g a'_g \neq 0$.
Set
\begin{align*}k_{g-1}(a_1, b_1, \cdots , a_{g-1}, b_{g-1})&:=k(a_1, b_1, \cdots , a_{g-1}, b_{g-1}, 0, 0),\\
k_1(a_g, b_g)&:=k(0, \cdots , 0, a_g, b_g).
\end{align*}
From the inductive assumption, $k_{g-1}$ and $k_1$ are homomorphisms.
So if $(a_1, b_1, \cdots , a_{g-1}, b_{g-1})=(0, \cdots , 0)$ or $(a_g, b_g)=(0, 0)$, we have 
$k(a_1, b_1, \cdots , a_g, b_g)=a_1k(1, 0, \cdots , 0)+b_1k(0, 1, 0, \cdots , 0)+\cdots +a_gk(0, \cdots , 0, 1, 0)+b_gk(0, \cdots , 0, 1)$.
We can assume $(a_1, b_1, \cdots , a_{g-1}, b_{g-1})\neq(0, \cdots , 0)$ and $(a_g, b_g)\neq(0, 0)$.
Suppose $a_{g-1}\neq 0$ and $a_g\neq 0$.
By the assumption, we have
\begin{eqnarray}
&&k(a_1, b_1, \cdots , a_g, b_g) = k(a_1, b_1, \cdots , a_{g-1}, b_{g-1}-1, 0, 0)+k(0, \cdots , 0, 1, a_g, b_g) \nonumber\\
&=& k(a_1, b_1, \cdots, a_{g-1}, b_{g-1}-1, 0, 0)+k(0, \cdots , 0, 1, 0, 1)+k(0, \cdots , 0, a_g, b_g-1)\nonumber\\
&=& k_{g-1}(a_1, b_1, \cdots , a_{g-1}, b_{g-1}-1)+k(0, \cdots , 0, 1, 0, 1)+k_1(a_g, b_g-1).\label{eq:kg-1k1}
\end{eqnarray}
Take $(c_1, \cdots , c_g, d_1, \cdots , d_g, c'_1, \cdots , c'_g, d'_1, \cdots , d'_g)$ satisfying $c_1d'_1-d_1c'_1+\cdots +c_gd'_g-d_gc'_g \neq 0$,
$c_{g-1} \neq 0$, $c_g \neq 0$, $c'_{g-1} \neq 0$, $c'_g \neq 0$,
$c_{g-1}+c'_{g-1} \neq 0$, and $c_g+c'_g \neq 0$.
For example we can take $(c_1, d_1, \cdots, c_g, d_g)=(0, \cdots, 0, 1, 0, 1, 0)$ and $(c'_1, d'_1, \cdots, c'_g, d'_g)=(0, \cdots, 0, 1, 0, 1, 1)$ as in Figure\ref{fig:example1}.
\begin{figure}[htbp]
  \begin{center}
    \includegraphics[width=10cm,clip]{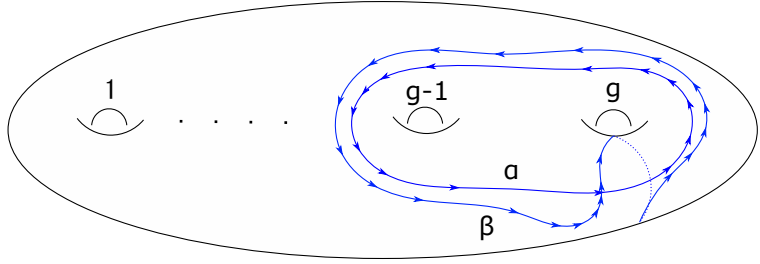}
    \caption{$\alpha=x_{g-1}x_g, \beta =x_{g-1}x_gy_g$.}
    \label{fig:example1}
  \end{center}
\end{figure}

Applying (\ref{eq:kg-1k1}) to $(c_1, \cdots , c_g, d_1, \cdots , d_g, c'_1, \cdots , c'_g, d'_1, \cdots , d'_g)$, we obtain
\begin{eqnarray*}
&&k_{g-1}(c_1+c'_1, d_1+d'_1, \cdots , c_{g-1}+c'_{g-1}, d_{g-1}+d'_{g-1}-1)\\&&+k(0, \cdots , 0, 1, 0, 1)+k_1(c_g+c'_g, d_g+d'_g-1) \\
&=& k(c_1+c'_1, d_1+d'_1, \cdots , c_g+c'_g, d_g+d'_g)\\
&=& k(c_1, d_1, \cdots , c_g, d_g)+k(c'_1, d'_1, \cdots , c'_g, d'_g)\\
&=& k_{g-1}(c_1, d_1, \cdots , c_{g-1}, d_{g-1}-1)+k(0, \cdots , 0, 1, 0, 1)+k_1(c_g, d_g-1)\\
&&+k_{g-1}(c'_1, d'_1, \cdots , c'_{g-1}, d'_{g-1}-1)+k(0, \cdots , 0, 1, 0, 1)+k_1(c'_g, d'_g-1).
\end{eqnarray*}
Since $k_{g-1}$ and $k_1$ are homomorphisms, we have
$k(0, \cdots , 0, 1, 0, 1)=k(0, \cdots , 0, 1, 0, 0)+k(0, \cdots , 0, 1)$.
Therefore if $a_{g-1}\neq 0$ and $a_g\neq 0$, then
$k(a_1, b_1, \cdots , a_g, b_g)=a_1k(1, 0, \cdots , 0)+b_1k(0, 1, 0, \cdots , 0)+\cdots +a_gk(0, \cdots , 0, 1, 0)+b_gk(0, \cdots , 0, 1)$.
Similar argument holds for other cases.
\end{proof}

\begin{theo}\label{thm:hom}
For any map $k:\mathbb{Z}^{2g}\rightarrow \mathbb{R}$,
$\Delta_k\in Z^1(W_1(g), W_1(g)\wedge W_1(g))$ if and only if there exists a homomorphism $k':\mathbb{Z}^{2g}\rightarrow \mathbb{R}$ such that $k'|_{\mathbb{Z}^{2g}\backslash(0, \cdots, 0)}=k|_{\mathbb{Z}^{2g}\backslash(0, \cdots, 0)}$.
\end{theo}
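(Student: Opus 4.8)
The plan is to make the cocycle condition for $\Delta_k$ completely explicit on basis elements and then recognize it as the hypothesis of Lemma~\ref{lem:k0hom} for a suitable modification of $k$. Since the expression $Z_0\cdot\Delta_k(Z_1)-Z_1\cdot\Delta_k(Z_0)-\Delta_k(\{Z_0,Z_1\})$ is bilinear in $(Z_0,Z_1)$, it vanishes identically if and only if it vanishes on all pairs of basis elements $Z_0,Z_1\in\pi_1(S)^{\mathrm{Ab}}$, so I would work with such monomials throughout.

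First I would compute each of the three terms. Using $\{Z_0,Z_1\}=i(Z_0,Z_1)Z_0Z_1$, the module structure on $W_1(g)\wedge W_1(g)$, and the key vanishing $\{Z_0,1\}=i(Z_0,1)Z_0=0$ (the trivial loop meets everything with intersection $0$), one gets $Z_0\cdot\Delta_k(Z_1)=i(Z_0,Z_1)k(Z_1)\,(Z_0Z_1)\wedge 1$, $Z_1\cdot\Delta_k(Z_0)=-i(Z_0,Z_1)k(Z_0)\,(Z_0Z_1)\wedge 1$, and $\Delta_k(\{Z_0,Z_1\})=i(Z_0,Z_1)k(Z_0Z_1)\,(Z_0Z_1)\wedge 1$, where I abbreviate $k(Z):=k(a_1,b_1,\dots,a_g,b_g)$ for $Z=x_1^{a_1}y_1^{b_1}\cdots x_g^{a_g}y_g^{b_g}$. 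Hence the cocycle condition at $(Z_0,Z_1)$ reads $i(Z_0,Z_1)\bigl(k(Z_0)+k(Z_1)-k(Z_0Z_1)\bigr)(Z_0Z_1)\wedge 1=0$.

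Next I would note that when $i(Z_0,Z_1)\neq 0$ none of $Z_0,Z_1,Z_0Z_1$ is trivial: $Z_0=1$ or $Z_1=1$ would force $i(Z_0,Z_1)=0$, and $Z_0Z_1=1$ means $Z_1=Z_0^{-1}$, so $i(Z_0,Z_1)=i(Z_0,Z_0^{-1})=0$ since $i$ is alternating. Thus $(Z_0Z_1)\wedge 1$ is a nonzero basis vector of $W_1(g)\wedge W_1(g)$, and $\Delta_k\in Z^1(W_1(g),W_1(g)\wedge W_1(g))$ if and only if $k(a_1+a'_1,b_1+b'_1,\dots,a_g+a'_g,b_g+b'_g)=k(a_1,b_1,\dots,a_g,b_g)+k(a'_1,b'_1,\dots,a'_g,b'_g)$ whenever $\sum_i(a_ib'_i-b_ia'_i)\neq 0$.

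Finally I would deduce both implications. For the forward implication, set $k'(v):=k(v)$ for $v\neq 0$ and $k'(0,\dots,0):=0$; by the previous step the additivity relation for $k$ on the locus $i\neq 0$ only ever involves vectors that are all nonzero, so $k'$ satisfies the same relation and $k'(0,\dots,0)=0$, whence Lemma~\ref{lem:k0hom} shows $k'$ is a homomorphism, and it agrees with $k$ off the origin by construction. For the converse, given a homomorphism $k'$ with $k'|_{\mathbb{Z}^{2g}\setminus\{0\}}=k|_{\mathbb{Z}^{2g}\setminus\{0\}}$ and any $Z_0,Z_1$ with $i(Z_0,Z_1)\neq 0$, the three elements $Z_0,Z_1,Z_0Z_1$ are nontrivial, so $k(Z_0)+k(Z_1)=k'(Z_0)+k'(Z_1)=k'(Z_0Z_1)=k(Z_0Z_1)$ and the cocycle identity from the first step holds. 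The real content is Lemma~\ref{lem:k0hom}; the only thing to watch here is the degeneracy $1\wedge 1=0$ together with the vanishing $\{Z_0,1\}=0$ in the computation of the three terms, after which the theorem is essentially a reformulation of the lemma, so I do not anticipate a genuine obstacle.
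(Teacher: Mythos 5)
Your proof is correct and follows essentially the same route as the paper: both reduce the cocycle condition for $\Delta_k$ to the conditional additivity $k(Z_0Z_1)=k(Z_0)+k(Z_1)$ whenever $i(Z_0,Z_1)\neq 0$ (the paper phrases the computation via the Goldman bracket on $\mathbb{Z}[\hat\pi]$, you via the formula $\{Z_0,Z_1\}=i(Z_0,Z_1)Z_0Z_1$ directly, which is the same content for $n=1$), and both then invoke Lemma~\ref{lem:k0hom}. Your explicit observations that $i(Z_0,Z_1)\neq 0$ forces $Z_0,Z_1,Z_0Z_1$ all nontrivial and that $\{Z,1\}=0$ are exactly the points that make the reduction work.
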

\begin{proof}
The Goldman Lie bracket 
$[\ ,\ ]$ on the Goldman Lie algebra $\mathbb{Z}[\hat\pi]$ is defined as follows.
For $\alpha=[a]$ and $\beta=[b]\in\hat\pi$,
$[\alpha, \beta]=\sum_{p\in a\cap b}\epsilon(p; a, b)[a_pb_p]$,
where $a$ and $b$ are generic immersions, $a_pb_p$ denote the product of $a$ and $b$ as based loops in $\pi_1(S, p)$ and $\epsilon(p; a, b)=1$ if the orientation given by the pair of vectors $\{a'(s), b'(t)\}$ agrees with the orientation of $S$ where $a(s)=b(t)=p$, $\epsilon(p; a, b)=-1$ otherwise.

Let $k$ be a map from $\mathbb{Z}^{2g}$ to $\mathbb{R}$.
Since the map $\mathbb{Z}[\hat\pi]\rightarrow W_1(g), [\gamma]\mapsto \omega_{[\gamma]}$ is a Lie algebra homomorphism \cite{Gol86}, we have
\begin{eqnarray*}
\Delta_k \{\omega_{\alpha}, \omega_{\beta}\}
&=&\Delta_k(\sum_{p\in a\cap b}\epsilon(p; \alpha, \beta)\omega_{[a_p b_p]})= \sum_{p\in a\cap b} k([a_p b_p])\epsilon(p; \alpha, \beta)\omega_{[a_p b_p]} \wedge 1\\
&=& k([a_p b_p])\{\omega_{\alpha}, \omega_{\beta}\}\wedge 1,
\end{eqnarray*}
\begin{eqnarray*}
\{\Delta_k(\omega_{\alpha}), \omega_{\beta}\}+\{\omega_{\alpha}, \Delta_k(\omega_{\beta})\} &=& k(\alpha)\{\omega_{\alpha}\wedge 1, \omega_{\beta}\}+k(\beta)\{\omega_{\alpha}, \omega_{\beta}\wedge 1\}\\
&=& (k(\alpha)+k(\beta))\{\omega_{\alpha}, \omega_{\beta}\}\wedge 1,
\end{eqnarray*}
where $k(\gamma):=k(a_1, b_1, \cdots a_g, b_g)$ for $\gamma\in\hat\pi \mapsto x_1^{a_1}y_1^{b_1}\cdots x_g^{1_g}y_g^{b_g}\in\pi_1(S)^\mathrm{Ab}$.
So if there exists a homomorphism $k':\mathbb{Z}^{2g}\rightarrow\mathbb{R}$ such that $k'|_{\mathbb{Z}^{2g}\backslash(0, \cdots, 0)}=k|_{\mathbb{Z}^{2g}\backslash(0, \cdots, 0)}$, then $\Delta_k$ is compatible with the bracket.

Suppose $\Delta_k$ is compatible with the bracket for some $k:\mathbb{Z}^{2g}\rightarrow\mathbb{R}$.
Then $k$ satisfies the latter condition in Lemma \ref{lem:k0hom}.
Hence we have a homomorphism $k':\mathbb{Z}^{2g}\rightarrow\mathbb{R}$ such that $k'|_{\mathbb{Z}^{2g}\backslash (0, \cdots, 0)}=k|_{\mathbb{Z}^{2g}\backslash (0, \cdots, 0)}$.
\end{proof}

\begin{theo}\label{thm:wedge1}
\begin{enumerate}
\item The map $\mathrm{Hom}(\mathbb{Z}^{2g}, \mathbb{R})\rightarrow H^1(W_1(g), W_1(g)\wedge 1), k\mapsto [\Delta_k]$ is bijective.
\item The map $H^1(W_1(g), W_1(g)\wedge 1)\rightarrow H^1(W_1(g), W_1(g)\wedge W_1(g))$ induced by the inclusion homomorphism $1\rightarrow W_1(g)$ is injective.
\end{enumerate}
\end{theo}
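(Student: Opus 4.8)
The plan is to establish the two parts separately. Throughout, for $v=(a_1,b_1,\dots,a_g,b_g)\in\mathbb{Z}^{2g}=\pi_1(S)^{\mathrm{Ab}}$ write $e_v:=x_1^{a_1}y_1^{b_1}\cdots x_g^{a_g}y_g^{b_g}$, so that $\{e_u,e_v\}=i(u,v)e_{u+v}$. Part (2) is formal. Since $e_0=1$ is central, $\mathbb{R}1$ is a trivial $W_1(g)$-submodule of $W_1(g)$, and $W_1(g)'$ is a complementary submodule: for $v\neq 0$ and any $u$ one has $\{e_u,e_v\}=i(u,v)e_{u+v}\in W_1(g)'$, since $u+v=0$ forces $v=-u$ and $i(u,-u)=0$. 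Hence $W_1(g)=\mathbb{R}1\oplus W_1(g)'$ as $W_1(g)$-modules, and therefore
\[
W_1(g)\wedge W_1(g)=\bigl(W_1(g)'\wedge 1\bigr)\oplus\bigl(W_1(g)'\wedge W_1(g)'\bigr)
\]
is a direct sum of $W_1(g)$-submodules, which is precisely the decomposition recorded before the statement. As cohomology is additive in the coefficient module, $H^1(W_1(g),W_1(g)\wedge W_1(g))$ splits accordingly, and the map of part (2) is exactly the inclusion of the summand $H^1(W_1(g),W_1(g)'\wedge 1)=H^1(W_1(g),W_1(g)\wedge 1)$; in particular it is injective. (One may also argue via the long exact sequence of $0\to W_1(g)\wedge 1\to W_1(g)\wedge W_1(g)\to W_1(g)'\wedge W_1(g)'\to 0$: the connecting map $H^0(W_1(g),W_1(g)'\wedge W_1(g)')\to H^1(W_1(g),W_1(g)\wedge 1)$ vanishes because $H^0(W_1(g),W_1(g)\wedge W_1(g))\to H^0(W_1(g),W_1(g)'\wedge W_1(g)')$ is onto.)

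For part (1), identify $W_1(g)\wedge 1$ with $W_1(g)'$; both $W_1(g)$ and $W_1(g)'$ are $\mathbb{Z}^{2g}$-graded with one-dimensional homogeneous pieces, and cochains, cocycles and coboundaries decompose along this grading. Injectivity of $k\mapsto[\Delta_k]$ is then immediate: $\Delta_k$ is homogeneous of degree $0$, while the degree-$0$ part of $W_1(g)\wedge 1$ is zero, so $\Delta_k=d\alpha$ forces $\Delta_k=0$ and hence $k=0$. For surjectivity, take $f\in Z^1(W_1(g),W_1(g)\wedge 1)$. As $[e_0,\,\cdot\,]=0$ and $e_0=1$ is central, $f(e_0)$ is $W_1(g)$-invariant; but $W_1(g)'$ has no nonzero invariant vector (an invariant $\sum_{v}c_v e_v$ satisfies $c_v\,i(u,v)=0$ for all $u$), so $f(e_0)=0$. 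Writing $f(e_v)=\sum_{p}c_{v,p}\,e_p\wedge 1$, the $e_{v+w}\wedge 1$-component of the cocycle identity reads $i(v,w)\bigl(c_{v+w,v+w}-c_{v,v}-c_{w,w}\bigr)=0$; putting $k(v):=c_{v,v}$, with $k(0):=0$, Lemma~\ref{lem:k0hom} shows $k\in\mathrm{Hom}(\mathbb{Z}^{2g},\mathbb{R})$, so $\Delta_k$ is a cocycle (Theorem~\ref{thm:hom}) and $f-\Delta_k$ is a cocycle with vanishing ``diagonal'' $c_{v,v}$.

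It remains to prove that a diagonal-free cocycle $\tilde f$ is a coboundary. Write $\tilde f=\sum_{\sigma\neq 0}\tilde f_\sigma$ in homogeneous components, $\tilde f_\sigma(e_v)=\psi_\sigma(v)\,e_{v+\sigma}\wedge 1$, and note that the coboundaries of degree $\sigma$ are precisely the $\psi_\sigma(v)=\mu\,i(v,\sigma)$, $\mu\in\mathbb{R}$. The crux is that $H^1(W_1(g),W_1(g)\wedge 1)$ vanishes in every nonzero degree $\sigma$. To see this, fix $v_0$ with $i(v_0,\sigma)\neq 0$, subtract the coboundary with $\mu=\psi_\sigma(v_0)/i(v_0,\sigma)$ so that $\psi_\sigma(v_0)=0$, and extract successively from the cocycle identity: $\psi_\sigma$ vanishes on $\ker i(v_0,\,\cdot\,)$; $\psi_\sigma(\sigma)=0$; the recursion $i(v_0,w)\psi_\sigma(w+v_0)=\psi_\sigma(w)\,i(v_0,w+\sigma)$; the reflection $\psi_\sigma(\sigma-v)=-\psi_\sigma(v)$ when $i(v,\sigma)\neq 0$; and the relation $i(v,w)\psi_\sigma(v+w)=\psi_\sigma(w)\,i(v,w+\sigma)$ whenever $\psi_\sigma(v)=0$. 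Propagating the vanishing from $\ker i(v_0,\,\cdot\,)$ through the cosets of $\ker i(v_0,\,\cdot\,)$ with these relations forces $\psi_\sigma\equiv 0$. Granting this, each $\tilde f_\sigma$ is a coboundary $d(\mu_\sigma e_\sigma\wedge 1)$, and a column-finiteness argument — each $\tilde f(e_v)$ is a polynomial, and no nonzero $\sigma$ is orthogonal to all the standard generators of $\mathbb{Z}^{2g}$ — shows $\mu_\sigma\neq 0$ for only finitely many $\sigma$, so that $\alpha:=\sum_\sigma\mu_\sigma e_\sigma\wedge 1\in W_1(g)\wedge 1$ and $\tilde f=d\alpha$. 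Hence $[f]=[\Delta_k]$ with $k$ a homomorphism, which gives surjectivity.

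The step I expect to be the genuine obstacle is the degree-$\sigma$ vanishing for $\sigma\neq 0$. For $v$ lying outside the rank-$2$ sublattice spanned by $v_0$ and $\sigma$, the relation $i(v,w)\psi_\sigma(v+w)=\psi_\sigma(w)\,i(v,w+\sigma)$ kills $\psi_\sigma(v)$ after a good choice of $w$; but the residual rank-$2$ locus — in particular the whole of $\mathbb{Z}^{2}$ when $g=1$, and the case of a non-primitive $\sigma$, where the coset labels supplied by $v_0$ are confined to a proper subgroup of $\mathbb{Z}$ — has to be treated by a more careful bootstrap along the $v_0$-lines, using the recursion together with the reflection identity in the spirit of the proof of Lemma~\ref{lem:k0hom}.
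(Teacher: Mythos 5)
Your overall architecture is sound and is essentially a repackaging of the paper's proof: part (2) via the module decomposition $W_1(g)\wedge W_1(g)=(W_1(g)'\wedge 1)\oplus(W_1(g)'\wedge W_1(g)')$ is exactly the paper's (your direct-summand phrasing is cleaner than the paper's explicit coboundary computation, and correct); the injectivity of $k\mapsto[\Delta_k]$ via the $\mathbb{Z}^{2g}$-grading is a tidy version of the paper's coefficient comparison; and the extraction of the diagonal $k(v)=c_{v,v}$ together with Lemma~\ref{lem:k0hom} is precisely what the paper does with the $u=1$ case of its equation for the coefficients $C^Z_u$. Your homogeneous component $\psi_\sigma(v)=c_{v,v+\sigma}$ is the paper's $C_Z:=C^{Z}_{Zu}$ with $u=e_\sigma$, so up to notation you have reproduced the paper's reduction faithfully.

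The genuine gap is exactly where you flag it: the claim that every homogeneous cocycle of degree $\sigma\neq 0$ is a multiple of $d(e_\sigma\wedge 1)$, i.e.\ that any $\psi$ satisfying $\psi(w)\,i(v,w+\sigma)-\psi(v)\,i(w,v+\sigma)=i(v,w)\,\psi(v+w)$ must equal $\mu\,i(\cdot,\sigma)$. Listing the recursion, the reflection identity, and the vanishing on $\ker i(v_0,\cdot)$ does not yet close the argument, and the cases you defer (all of $\mathbb{Z}^2$ when $g=1$; non-primitive $\sigma$; the coset bootstrap when denominators $i(v_0,w)$ vanish) are not edge cases but the substance of the theorem. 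This is precisely where the paper spends its effort: for $g=1$ it first handles $c\neq 0$, $d\neq 0$ by substituting $b=b'=0$ and $b=0,a'=0$ into its relation~(\ref{eq:g1}), then must choose special test vectors (e.g.\ $(a_0,b_0,a_0',b_0')=(1,d,2,d)$) to kill the residual combination $\tfrac{c}{d}C_x+C_y$, and reduces the remaining sign patterns of $(c,d)$ by an explicit $\mathrm{SL}(2,\mathbb{Z})$ change of variable; for $g\geq 2$ it runs an induction requiring the further identities $d_iC_{x_j}=d_jC_{x_i}$ etc.\ and a coefficient-of-$a_1^2$ comparison in a long polynomial identity. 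None of these steps is routine propagation, so as written your proof of surjectivity in part (1) is incomplete at its central point; everything surrounding that point is correct.
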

\begin{proof}
We begin by showing the map
$\mathrm{Hom}(\mathbb{Z}^{2g}, \mathbb{R})\rightarrow H^1(W_1(g), W_1(g)\wedge 1)$
is injective.
Suppose $\Delta_k$ is a coboundary, that is,
$\Delta_k=\sum_{u\in\pi_1 (S)^{\mathrm{Ab}}}C_{u} d(u\wedge 1)$, 
for some $C_{u}\in\mathbb{R}$.
Therefore
$k(Z)Z\wedge 1=\Delta_k(Z)=\sum_{u\in\pi_1 (S)^{\mathrm{Ab}}}C_{u}\{Z, u\}\wedge 1$
for every $Z\in\pi_1(S)^{\mathrm{Ab}}$.
We look at the coefficient of $Z\wedge 1$ in the right hand side.
Since, for any $u\in\pi_1(S)^{\mathrm{Ab}}$, $\{Z, u\}=i(Z, u)Zu\notin\mathbb{R}Z-0$ where $\mathbb{R}Z\subset W_1(g)$ is the $\mathbb{R}$-submodule generated by $Z$,
we obtain
$k(Z)=0$
for all $Z\neq 1\in\pi_1(S)^{\mathrm{Ab}}$.
Hence $[\Delta_k]=0\in H^1(W_1(g), W_1(g)\wedge 1)$ if and only if $k=0$.
Therefore the map $\mathrm{Hom}(\mathbb{Z}^{2g}, \mathbb{R})\rightarrow H^1(W_1(g), W_1(g)\wedge 1)$ is injective.

Next we will show the map
$\mathrm{Hom}(\mathbb{Z}^{2g}, \mathbb{R})\rightarrow H^1(W_1(g), W_1(g)\wedge 1)$
is surjective.
Let $[\Delta]\in H^1(W_1(g), W_1(g)\wedge 1)$ be represented by
$\Delta(Z)=\sum_{u\in\pi_1 (S)^{\mathrm{Ab}}}C_{u}^Z u\wedge 1$
for $Z\in\pi_1(S)^{\mathrm{Ab}}$.
Since $\Delta$ is compatible with the bracket, we have
\begin{eqnarray*}
(a_1b'_1-b_1a'_1+\cdots +a_gb'_g-b_ga'_g)\Delta(ZZ')
=\{\Delta(Z), Z'\}+\{Z, \Delta(Z')\},
\end{eqnarray*}
for $Z=x_1^{a_1}y_1^{b_1}\cdots x_g^{a_g}y_g^{b_g}$ and $Z'=x_1^{a'_1}y_1^{b'_1}\cdots x_g^{a'_g}y_g^{b'_g}\in \pi_1(S)^{\mathrm{Ab}}$.
Considering the coefficient of $ZZ'u\otimes 1$ for $u=x_1^{c_1}y_1^{d_1}\cdots x_g^{c_g}y_g^{d_g}$, we obtain
\begin{eqnarray}
&&(a_1b'_1-b_1a'_1+\cdots +a_gb'_g-b_ga'_g)C_{ZZ'u}^{ZZ'} \label{eq:g}\\
&=&((c_1+a_1)b'_1-(d_1+b_1)a'_1+\cdots +(c_g+a_g)b'_g-(d_g+b_g)a'_g)C_{Zu}^Z \nonumber \\
&&+(a_1(d_1+b'_1)-b_1(c_1+a'_1)+\cdots +a_g(d_g+b'_g)-b_g(c_g+a'_g))C_{Z'u}^{Z'}.\nonumber
\end{eqnarray}

If $u=1$, the map
$C:\mathbb{Z}^{2g}\rightarrow \mathbb{R},
(a_1, b_1, \cdots , a_g, b_g)\mapsto C_{x_1^{a_1}y_1^{b_1}\cdots x_g^{a_g}y_g^{b_g}}^{x_1^{a_1}y_1^{b_1}\cdots x_g^{a_g}y_g^{b_g}}$
is a homomorphism by Lemma \ref{lem:k0hom}. 
Hence
$(\Delta -\Delta_C)(Z)=\sum_{u\neq Z\in\pi_1(S)^{\mathrm{Ab}}} C_{u}^Z u\wedge 1$
for all $Z\in\pi_1(S)^{\mathrm{Ab}}$.

So we can assume $u\neq 1$.
Set
$C_Z:=C_{Zu}^Z$.
We will show
\begin{eqnarray}
C_{x_1^{a_1}y_1^{b_1}\cdots x_g^{a_g}y_g^{b_g}} = a_1C_{x_1}+b_1C_{y_1}+\cdots +a_g C_{x_g}+b_g C_{y_g}
\label{statement1}
\end{eqnarray}
for all $(a_1, b_1, \cdots, a_g, b_g)\in\mathbb{Z}^{2g}$
and $d_iC_{x_j}=d_jC_{x_i}$, $c_iC_{x_j}=-d_jC_{y_i}$, and $c_iC_{y_j}=c_jC_{y_i}$
for all $i,j\in\{1, \cdots , g\}$, by the induction on $g>0$.
In other words, $C_Z Zu\wedge 1=\frac{C_{x_1}}{d_1}d(u\wedge 1)(Z)$ for all $Z\in\pi_1(S)^{\mathrm{Ab}}$ when $d_1\neq 0$.
In the case $g=1$,
\begin{eqnarray}
(ab'-ba')C_{x^{a+a'}y^{b+b'}}=((c+a)b'-(d+b)a')C_{x^ay^b}+(a(d+b')-b(c+a'))C_{x^{a'}y^{b'}}.
\label{eq:g1}
\end{eqnarray}
It suffices to show (\ref{statement1}) in the case $c=c_1\neq 0$ and $d=d_1\neq 0$.
In fact, suppose the statement holds true for $c\neq 0, d\neq 0$.
For $A\in\mathrm{SL}(2, \mathbb{Z})$,
$(ab'-ba')C_{AZZ'}=((a+c')b'-(b+d')a')C_{AZ}+(a(b'+d')-b(a'+c'))C_{AZ'}$
where $A \left( \begin{array}{c} c' \\ d' \end{array} \right)=\left( \begin{array}{c} c \\ d \end{array} \right)$.
Set $C'_{Z}:=C_{AZ}$ and if $c'\neq 0$ and $d'\neq 0$, we obtain
$C'_{x^ay^b}=aC'_x+bC'_y$ and
$c'C'_x=-d'C'_y$.
If $c\neq 0, d=0$, by substituting $A=\left(\begin{array}{cc} 1 & 0  \\ -1 & 1 \end{array}\right)$, we have $\left( \begin{array}{c} c' \\ d' \end{array}\right)=\left(\begin{array}{c} c\\ c\end{array}\right)$.
By the assumption, we obtain
$C'_{x^ay^b}=aC'_x+bC'_y$ and
$cC'_x=-cC'_y$.
Therefore
\begin{eqnarray*}
C_{x^ay^b}
&=&C'_{x^ay^{a+b}}=aC'_x+(a+b)C'_y=aC'_{xy}+bC'_y\\
&=&aC_x+bC_y,
\end{eqnarray*}
and since $C_x=C'_{xy}=C'_x+C'_y=0, d=0$, we get
$cC_x=-dC_y$.
Similar argument is valid for the case $c=0$ and $d\neq 0$.

Suppose $c\neq 0$ and $d\neq 0$.
Substituting $b=b'=0$ into (\ref{eq:g1}), we have
$0=-da'C_{x^a}+daC_{x^{a'}}$
for all $a, a'\in\mathbb{Z}$.
Since $d\neq 0$, we obtain $C_{x^a}=aC_x$
Similarly, we have
$C_{y^b}=bC_y$.
Substituting $b=0, a'=0$ into (\ref{eq:g1}), we have
\begin{eqnarray*}
ab'C_{x^ay^{b'}}&=&(c+a)b'C_{x^a}+a(d+b')C_{y^{b'}}\\
&=&ab'((c+a)C_x+(d+b')C_y).
\end{eqnarray*}
So for $a, b\in\mathbb{Z}$ satisfying $ab\neq 0$,
\begin{eqnarray*}
C_{x^ay^b}&=&(c+a)C_x+(d+b)C_y\\
&=& \frac{C_x}{d}(ad-bc)-\frac{C_x}{d}(ad-bc)+(c+a)C_x+(d+b)C_y\\
&=& \frac{C_x}{d}(ad-bc)+(d+b)(\frac{c}{d}C_x+C_y).
\end{eqnarray*}
For $a, b, a', b'\in\mathbb{Z}$ satisfying $ab\neq 0, a'b'\neq 0$ and $(a+a')(b+b')\neq 0$,
applying the above equation to (\ref{eq:g1}), we have
\begin{eqnarray*}
&&(ab'-ba')(d+b+b')(\frac{c}{d}C_x+C_y)\\
&=&((c+a)b'-(d+b)a')(d+b)(\frac{c}{d}C_x+C_y)+(a(d+b')-b(c+a'))(d+b')(\frac{c}{d}C_x+C_y).
\end{eqnarray*}
Hence 
$d(2(ab'-ba')+(a-a')d-(b-b')c)(\frac{c}{d}C_x+C_y)=0$.
Now we choose $(a_0, b_0, a'_0, b'_0)$ satisfying $2(a_0b'_0-b_0a'_0)+(a_0-a'_0)d-(b_0-b'_0)c\neq 0$, for example $a_0=1, b_0=d, a'_0=2, b'_0=d$ as in Figure \ref{fig:example_xy_x^2y}. Then we obtain
$\frac{c}{d}C_x+C_y=0$.
Hence
$C_{x^ay^b}=\frac{C_x}{d}(ad-bc)=aC_x+bC_y$.
This proves (\ref{statement1}) for $g=1$.

\begin{figure}[htbp]
  \begin{center}
    \includegraphics[clip, width=6cm]{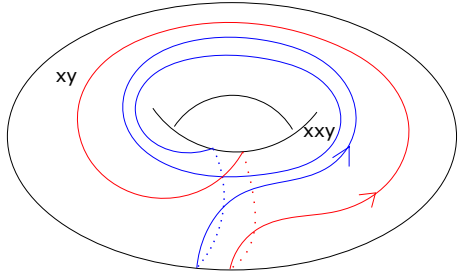}
    \caption{$xy^d$ and $x^2y^d$ when $d=1$.}
    \label{fig:example_xy_x^2y}
  \end{center}
\end{figure}

Consider the case $g\geq 2 $.
For $i\neq j$, substituting $(a_1, b_1, \cdots , a_g, b_g)=(0, \cdots, 0, \stackrel{2i-1}{\breve{1}}, 0, \cdots, 0)$ and $(a'_1, b'_1, \cdots, a'_g, b'_g)=(0, \cdots, 0, \stackrel{2j-1}{\breve{1}}, 0, \cdots, 0)$ into (\ref{eq:g}), we obtain
$0=-d_jC_{x_i}+d_iC_{x_j}$.
Similarly we have $0=c_jC_{x_i}+d_iC_{x_j}$ and $0=c_jC_{x_i}+d_iC_{y_j}$.
When $i=j$, from the calculation for $g=1$, we have $0=c_iC_{x_j}+d_jC_{y_i}$.

Now we assume (\ref{statement1}) for $g-1$.
Recall $(c_1, d_1, \cdots, c_g, d_g)\neq (0, \cdots, 0)$. Hence we assume $d_1\neq 0$. Similar argument holds for other cases.
By the inductive assumption, we have
$C_{x_1^{a_1}y_1^{b_1}\cdots x_{g-1}^{a_{g-1}}y_{g-1}^{b_{g-1}}} = \frac{C_{x_1}}{d_1}(a_1d_1-b_1c_1+\cdots +a_{g-1}d_{g-1}-b_{g-1}c_{g-1})$ and
$C_{x_g^{a_g}y_g^{b_g}}=a_gC_{x_g}+b_gC_{y_g}
= \frac{C_{x_1}}{d_1}(a_gd_g-b_gc_g)$.

If $(a_1, b_1, \cdots , a_{g-1}, b_{g-1})=(0, \cdots , 0)$ or $(a_g, b_g)=(0, 0)$, then (\ref{statement1}) holds true.
When $a_{g-1}\neq 0$ and $a_g\neq 0$, we have
\begin{eqnarray*}
&&a_{g-1}C_{x_1^{a_1}y_1^{b_1}\cdots x_g^{a_g}y_g^{b_g}}=(c_{g-1}+a_{g-1}+c_gb_g-d_ga_g)C_{x_1^{a_1}y_1^{b_1}\cdots x_{g-1}^{a_{g-1}}y_{g-1}^{b_{g-1}-1}}\\
&&\ +(a_1d_1-b_1c_1+\cdots+a_{g-2}d_{g-2}-b_{g-2}c_{g-2} \\
&&\ \ +a_{g-1}(d_{g-1}+1)-(b_{g-1}-1)c_{g-1})C_{y_{g-1}x_g^{a_g}y_g^{b_g}},\\
&&a_gC_{y_{g-1}x_g^{a_g}y_g^{b_g}} = (c_{g-1}+c_g+a_g)C_{x_g^{a_g}y_g^{b_g-1}}+(a_g(d_g+1)-(b_g-1)c_g)C_{y_{g-1}y_g}.
\end{eqnarray*}
From straight-forward calculation, we have
\begin{eqnarray}
&&a_{g-1}a_gC_{x_1^{a_1}y_1^{b_1}\cdots x_g^{a_g}y_g^{b_g}}\nonumber\\
&=&a_{g-1}a_g \frac{C_{x_1}}{d_1}(a_1d_1-b_1c_1+\cdots +a_gd_g-b_gc_g)\nonumber\\
&&+(a_1d_1-b_1c_1+\cdots +a_{g-2}d_{g-2}-b_{g-2}c_{g-2}+a_{g-1}(d_{g-1}+1)-(b_{g-1}-1)c_{g-1})\nonumber\\
&&\cdot(c_gb_g-d_ga_g-c_g-a_g)(C_{y_{g-1}y_g}-\frac{C_{x_1}}{d_1}(-C_{g-1}-C_g)).\label{eq:ag-1ag}
\end{eqnarray}
If $c_g=0$ and $d_g=-1$, we obtain
$C_{x_1^{a_1}y_1^{b_1}\cdots x_g^{a_g}y_g^{b_g}}=\frac{C_{x_1}}{d_1}(a_1d_1-b_1c_1+\cdots +a_gd_g-b_gc_g)$.
So we can assume $c_g\neq 0$ or $d_g\neq -1$.
By applying (\ref{eq:ag-1ag}) to (\ref{eq:g}),
\begin{eqnarray}
&&(\sum_{i=1}^g a_ib'_i-b_ia'_i)\cdot\frac{C_{y_{g-1}y_g}-\frac{C_{x_1}}{d_1}(-C_{g-1}-C_g)}{(a_{g-1}+a'_{g-1})(a_g+a'_g)}((a_1+a'_1)d_1-(b_1+b'_1)c_1+\cdots \nonumber\\
&&+(a_{g-1}+a'_{g-1})d_{g-1}-(b_{g-1}+b'_{g-1}-1)c_{g-1}+a_{g-1}+a'_{g-1})\nonumber\\&&\cdot (c_g(b_g+b'_g)-d_g(a_g+a'_g)-c_g-(a_g+a'_g))\nonumber\\
&=&(\sum_{i=1}^g (c_i+a_i)b'_i-(d_i+b_i)a'_i)\cdot\frac{C_{y_{g-1}y_g}-\frac{C_{x_1}}{d_1}(-C_{g-1}-C_g)}{a_{g-1}a_g}(a_1d_1-b_1c_1+\cdots\nonumber\\
&&+a_{g-1}d_{g-1}-(b_{g-1}-1)c_{g-1}+a_{g-1})(c_gb_g-d_ga_g-c_g-a_g)\nonumber\\
&&+(\sum_{i=1}^g a_i(d_i+b'_i)-b_i(c_i+a'_i))\frac{C_{y_{g-1}y_g}-\frac{C_{x_1}}{d_1}(-C_{g-1}-C_g)}{a'_{g-1}a'_g}(a'_1d_1-b'_1c_1+\cdots\nonumber\\
&&+a'_{g-1}d_{g-1}-(b'_{g-1}-1)c_{g-1}+a'_{g-1})(c_gb'_g-d_ga'_g-c_g-a'_g).
\label{eq:long_equation}
\end{eqnarray}
The coefficient of $a_1^2$ in (\ref{eq:long_equation}) as a polynomial of $a_1$ is given by
\begin{eqnarray*}
&&b_1'\cdot\frac{C_{y_{g-1}y_g}-\frac{C_{x_1}}{d_1}(-C_{g-1}-C_g)}{(a_{g-1}+a'_{g-1})(a_g+a'_g)}d_1(c_g(b_g+b'_g)-d_g(a_g+a'_g)-c_g-(a_g+a'_g))\\
&=&b_1'\cdot\frac{C_{y_{g-1}y_g}-\frac{C_{x_1}}{d_1}(-C_{g-1}-C_g)}{a_{g-1}a_g}d_1(c_gb_g-d_ga_g-c_g-a_g).
\end{eqnarray*}
This equation holds for all $(a_1, b_1, \cdots, a_g, b_g)$ and $(a'_1, b'_1, \cdots, a'_g, b'_g)\in\mathbb{Z}^{2g}$ satisfying $a_{g-1}\neq 0, a_g\neq 0, a'_{g-1}\neq 0, a'_g\neq 0, a_{g-1}+a'_{g-1}\neq 0$ and $a_g+a'_g\neq 0$.
Since $d_1\neq 0$ and $(c_g, d_g)\neq (0, -1)$, we obtain
$C_{y_{g-1}y_g}-\frac{C_{x_1}}{d_1}(-C_{g-1}-C_g)=0$.
Therefore
$C_{x_1^{a_1}y_1^{b_1}\cdots x_g^{a_g}y_g^{b_g}}=\frac{C_{x_1}}{d_1}(a_1d_1-b_1c_1+\cdots +a_gd_g-b_gc_g)$.
This proves the map $\mathrm{Hom}(\mathbb{Z}^{2g}, \mathbb{R})\rightarrow H^1(W_1(g), W_1(g)\wedge 1)$ is injective.

We show the map $H^1(W_1(g), W_1(g)\wedge 1)\rightarrow H^1(W_1(g), W_1(g)\wedge W_1(g))$ is injective.
Let $\Delta\in Z^1(W_1(g), W_1(g)\wedge 1)$ be a coboundary, that is,
$\Delta=\sum_{u, v\in\pi_1(S)^{\mathrm{Ab}}}C_{u, v}d(u\wedge v)$
for some $C_{u, v}\in\mathbb{R}$.
For any $Z\in\pi_1(S)^{\mathrm{Ab}}$, 
$\sum_{u, v\in\pi_1(S)^{\mathrm{Ab}}}C_{u, v}(\{Z, u\}\wedge v+u\wedge\{Z, v\})=\Delta(Z)\in W_1(g)\wedge 1$. 
If $\{Z, u\}\wedge v\neq 0$ and $Zu \wedge v\in W_1(g)\wedge 1$, then we obtain $v=1$.
Therefore we can take $w\in W_1(g)$ such that
$\Delta=d(w\wedge 1)$.
This proves Theorem \ref{thm:wedge1}.
\end{proof}

\end{subsection}
\begin{subsection}{Computation of $H^1(W_1(g), W_1(g)\otimes W_1(g))$}\label{section2}
In this section we show the fact $\Delta\in Z^1(W_1(g), W_1(g)\otimes W_1(g))$ is determined by the values $\Delta(1), \Delta(x_1), \Delta(y_1), \cdots, \Delta(x_g)$ and $\Delta(y_g)$.
By the decomposition of $W_1(g)$-modules  $W_1(g)\otimes W_1(g)=\mathbb{R}(1\otimes 1)\oplus(W_1(g)'\otimes 1)\oplus(1\otimes W_1(g)')\oplus(W_1(g)'\otimes W_1(g)')$, we have a decomposition of $\mathbb{R}$-modules
\begin{align*} H^1(W_1(g), W_1(g)\otimes W_1(g))&\cong H^1(W_1(g), \mathbb{R})\oplus H^1(W_1(g), W_1(g)')\\ \ &\oplus H^1(W_1(g), W_1(g)')\oplus H^1(W_1(g), W_1(g)'\otimes W_1(g)').\end{align*}
We will show $H^1(W_1(g), \mathbb{R})\cong\mathbb{R}$, $H^1(W_1(g), W_1(g)')\cong\mathrm{Hom}(\mathbb{Z}^{2g}, \mathbb{R})$ and
$H^1(W_1(g), W_1(g)'\otimes W_1(g)')\cong 0$.
This implies there is an isomorphism $H^1(W_1(g), W_1(g)\otimes W_1(g))\cong \mathbb{R}\times\mathrm{Hom}(\mathbb{Z}^{2g}, \mathbb{R})^2$.
As a corollary, we can show the map $\mathrm{Hom}(\mathbb{Z}^{2g}, \mathbb{R})\rightarrow H^1(W_1(g), W_1(g)\wedge W_1(g))$ is surjective.

Let $\Delta\in Z^1(W_1(g), W_1(g)\otimes W_1(g))$ be given by
\begin{align}
\Delta(Z)=\sum_{u, v\in\pi_1(S)^{\mathrm{Ab}}}C^Z_{u, v}u\otimes v\label{eq:coefz}
\end{align}
for $Z\in\pi_1(S)^{\mathrm{Ab}}$.
Since $\Delta$ is compatible with the bracket, we have
$\Delta\{Z_1, Z_2\}=\{\Delta(Z_1), Z_2\}+\{Z_1, \Delta(Z_2)\}$.
Considering the coefficients of $u\otimes v$, we obtain
\begin{align}
i(Z_1, Z_2)C^{Z_1Z_2}_{u, v}&=i(u, Z_2)C^{Z_1}_{Z_2^{-1}u, v}+i(v, Z_2)C^{Z_1}_{u, Z_2^{-1}v}\nonumber\\
&+i(Z_1, u)C^{Z_2}_{Z_1^{-1}u, v}+i(Z_1, v)C^{Z_2}_{u, Z_1^{-1}v}
\label{compatible_condition}
\end{align}
for all $Z_1, Z_2\in\pi_1(S)^{\mathrm{Ab}}$ and $u, v\in\pi_1(S)^{\mathrm{Ab}}$.

\begin{prop}\label{thm0}
There is an injective homomorphism
\begin{align*}
Z^1(W_1(g), W_1(g)\otimes W_1(g))\rightarrow (W_1(g)\otimes W_1(g))^{1+2g},\\
\Delta \mapsto (\Delta(1), \Delta(x_1), \Delta(y_1), \cdots, \Delta(x_g), \Delta(y_g)).
\end{align*}
\end{prop}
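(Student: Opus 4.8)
The map is plainly $\mathbb{R}$-linear, so the whole point is injectivity: I must show that if $\Delta\in Z^1(W_1(g), W_1(g)\otimes W_1(g))$ vanishes on $\{1, x_1, y_1, \dots, x_g, y_g\}$ then $\Delta = 0$. Writing $\Delta(Z) = \sum_{u,v} C^Z_{u,v}\, u\otimes v$, the plan is to exploit the coefficient form (\ref{compatible_condition}) of the cocycle identity, whose effect is that $i(Z_1, Z_2)\Delta(Z_1 Z_2) = \{\Delta(Z_1), Z_2\} + \{Z_1, \Delta(Z_2)\}$ recovers $\Delta(Z_1 Z_2)$ from $\Delta(Z_1), \Delta(Z_2)$ as soon as $i(Z_1, Z_2)\neq 0$; so one wants to propagate the vanishing of $\Delta$ outward from the seeds. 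Since $W_1(g)\otimes W_1(g)$ is $\mathbb{Z}^{2g}$-graded and each homogeneous component of a cocycle is again a cocycle — vanishing on the (homogeneous) seeds — one may moreover assume $\Delta$ homogeneous, which keeps the supports of the elements $\Delta(Z)$ under control.

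Naive propagation is blocked on two counts: $i(Z_1,Z_2)$ vanishes very often (for instance $i(x_j, x_j) = 0$, and $i$ is block-diagonal across the $g$ handles), and the seeds do not include the inverse generators. I would supplement it with three elementary observations. First, $Z\mapsto C^Z_{1,1}$ is a $1$-cocycle valued in the trivial submodule $\mathbb{R}(1\otimes 1)$ (trivial because $\{Z,1\}=0$), hence a linear functional killing $[W_1(g),W_1(g)]$; as every $Z\neq 1$ is a nonzero multiple of $\{Z x_j^{-1}, x_j\}$ or of $\{Z y_j^{-1}, y_j\}$ for a suitable $j$, one gets $[W_1(g),W_1(g)] = W_1(g)'$, so $C^Z_{1,1}=0$ for $Z\neq 1$, and $C^1_{1,1}=0$ by hypothesis; thus $C^Z_{1,1}=0$ for all $Z$. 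Second, for a monomial $W$ the kernel of $\{W,-\}$ is spanned by the $p\otimes q$ with $i(W,p)=i(W,q)=0$, so if monomials $W_1,\dots,W_m$ have exponent vectors spanning $\mathbb{R}^{2g}$ then $\bigcap_\ell \ker\{W_\ell,-\} = \mathbb{R}(1\otimes 1)$. Third, if $\Delta(W)=0$ then $\{W,\Delta(Z)\} = i(W,Z)\Delta(WZ)$, which vanishes as soon as $i(W,Z)=0$ or $\Delta(WZ)=0$. Put $\mathcal{K} := \{Z : \Delta(Z)=0\}$. These facts give: $\mathcal{K}$ is closed under products with nonzero intersection number, and $Z\in\mathcal{K}$ whenever $Z\neq 1$ and there are monomials $W_\ell\in\mathcal{K}$ with exponent vectors spanning $\mathbb{R}^{2g}$ such that for each $\ell$ either $i(W_\ell, Z)=0$ or $W_\ell Z\in\mathcal{K}$. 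Simple closure on the seeds already puts all $x_j^a y_j^b$ with $a,b\geq 1$ into $\mathcal{K}$; the second closure then yields $x_j^{-1}, y_j^{-1}\in\mathcal{K}$ (for $x_1^{-1}$, take $W = x_1^a y_1^b$ with $a\geq 2$, $b\geq 1$, where $W$ and $Wx_1^{-1}=x_1^{a-1}y_1^b$ lie in $\mathcal{K}$, together with $W = x_k^a y_k^b$ for $k\geq 2$, where $i(W,x_1^{-1})=0$ automatically; these exponent vectors span $\mathbb{R}^{2g}$), after which simple closure from the $4g$ elements $x_j^{\pm1}, y_j^{\pm1}$ produces every single-handle monomial.

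The main obstacle is the remaining step, bringing the genuinely multi-handle monomials into $\mathcal{K}$: exactly because $i$ is block-diagonal, products of single-handle elements never leave a single handle, so no iteration of simple closure can manufacture a first multi-handle element, and the same is true of (\ref{compatible_condition}) applied with two already-known factors. Here one must go back to (\ref{compatible_condition}), substitute well-chosen loops for $Z_1, Z_2$, and then choose the auxiliary indices $u,v$ so as to knock out the offending coefficients $C^Z_{u,v}$ one family at a time — the same kind of explicit index bookkeeping as in the proof of Theorem \ref{thm:wedge1}, together with the Jacobi identity to close up the otherwise infinite chains of relations. Once enough multi-handle monomials are known to lie in $\mathcal{K}$, an induction on the number of handles involved and on total degree, alternating the two closure operations, gives $\mathcal{K}=\pi_1(S)^{\mathrm{Ab}}$, i.e. $\Delta=0$.
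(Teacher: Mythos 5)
Your reduction to the single--handle case is correct and in places cleaner than the paper's: the observation that $W_1(g)'=[W_1(g),W_1(g)]$ (every basis element $Z\neq 1$ being a nonzero multiple of $\{Zx_j^{-1},x_j\}$ or $\{Zy_j^{-1},y_j\}$), so that the $\mathbb{R}(1\otimes 1)$--component of any cocycle vanishing at $1$ vanishes identically, disposes of all the coefficients $C^Z_{1,1}$ at once, where the paper kills them one substitution at a time. Your two closure operations on $\mathcal{K}=\{Z:\Delta(Z)=0\}$ are sound (the description of $\ker\{W,-\}$ does itself require the finite--support geometric--chain argument along the orbits $(W^np, W^{-n}q)$, but that is the same argument the paper uses throughout), and they do reach all of $x_j^{\pm1}, y_j^{\pm1}$ and hence every monomial supported on a single handle.

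The gap is the multi--handle step, and it is not a small one: it is the bulk of the paper's proof of this proposition. You correctly diagnose that neither closure operation can produce a first genuinely multi--handle element (for $Z$ involving $m\geq 2$ handles, the single--handle monomials $W$ with $i(W,Z)=0$ or $WZ$ single--handle span only a $(2g-m)$--dimensional subspace, so the second closure never fires), but the sentence ``here one must go back to (\ref{compatible_condition}), substitute well-chosen loops\dots and knock out the offending coefficients one family at a time'' is a statement of intent, not an argument. What is actually needed, and what the paper supplies, is finer than membership in $\mathcal{K}$: one must use $\{W,\Delta(Z)\}=i(W,Z)\Delta(WZ)$ as a coefficient--by--coefficient relation between \emph{nonzero} values of $\Delta$. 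Concretely, for $Z=z_iw_j$: (i) the generators $\zeta$ with $i(z_iw_j,\zeta)=0$ confine the support of $\Delta(z_iw_j)$ to $\langle z_i,w_j\rangle\otimes\langle z_i,w_j\rangle$ (this much your framework gives); (ii) computing $\Delta(z_i\overline z_iw_j)$ from the two factorizations $(z_iw_j)\cdot\overline z_i$ and $z_i\cdot(w_j\overline z_i)$ ties $C^{z_iw_j}_{u,v}$ to $C^{w_j\overline z_i}_{\cdot,\cdot}$ and forces the $z_i$--exponents of $(u,v)$ into $\{(1,0),(0,1),(0,0)\}$; (iii) the elements $\overline z_i^{\pm1}\overline w_j^{\pm1}$ with $i(z_iw_j,\cdot)=0$ force $c_i=d_j$, $c_i'=d_j'$, leaving only $(u,v)\in\{(z_iw_j,1),(1,z_iw_j),(1,1)\}$, which are then killed separately; and only after that does an induction on the number of handles (via $\Delta(x_1^{a_1}\cdots y_{k+1}^{b_{k+1}})=a_k^{-1}a_{k+1}^{-1}\{\cdots\}$) finish the proof. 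None of this is routine bookkeeping that can be waved at, and your proposal as written does not establish $\Delta(x_1x_2)=0$.
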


\begin{proof}

Let $\Delta\in Z^1(W_1(g), W_1(g)\otimes W_1(g))$ be the 1-cocycle given by (\ref{eq:coefz}).
Suppose $\Delta(1)=0$ and $\Delta(x_i)=\Delta(y_i)=0$ for all $i=1, \cdots, g$.
Substituting $Z_1=x_i, Z_2=y_i$ into (\ref{compatible_condition}), we have $C^{x_iy_i}_{u, v}=0$ for all $u, v\in\pi_1(S)^{\mathrm{Ab}}$. Hence $\Delta(x_iy_i)=0$.
Since $\Delta$ is compatible with the bracket,
we have
\begin{align*}
\Delta(x_i^{a_i}y_i)&=\{x_i, \Delta(x_i^{a_i-1}y_i)\}=\cdots=\{x_i, \cdots \{x_i, \Delta(x_iy_i)\}\cdots\}=0,\\
\Delta(x_i^{a_i}y_i^{b_i})&=a_i^{-1}\{\Delta(x_i^{a_i}y_i^{b_i-1}), y_i\}=\cdots \\& =a_i^{-(b_i-1)}\{\cdots\{\{\Delta(x_i^{a_i}y_i), y_i\}, y_i\} \cdots, y_i\}=0
\end{align*}
for all $a_i, b_i>0$ and $i=1, \cdots, g$.

Next we show $\Delta(x_i^{-1})=0$.
Substituting $Z_1=x_i y_i, Z_2=x_i^{-1}$ into (\ref{compatible_condition}), we have
$0=i(x_iy_i, u)C^{x_i^{-1}}_{(x_iy_i)^{-1}u, v}+i(x_iy_i,v)C^{x_i^{-1}}_{u, (x_iy_i)^{-1}v}$.
Consider the following equations
\begin{eqnarray*}
0=i(x_iy_i, u)C^{x_i^{-1}}_{(x_iy_i)^{n-1}u, (x_iy_i)^{-n}v}+i(x_iy_i,v)C^{x_i^{-1}}_{(x_iy_i)^{n}u, (x_iy_i)^{-n-1}v}
\end{eqnarray*}
for all $n\in\mathbb{Z}$.
If 
\begin{itemize}
\item$i(x_iy_i, u)\neq 0\ \text{and}\ i(x_iy_i, v)=0,\ \text{or}$ 
\item$i(x_iy_i, u)=0\ \text{and}\ i(x_iy_i, v)\neq 0$,
\end{itemize}
then
$C^{x_i^{-1}}_{(x_iy_i)^{n-1}u, (x_iy_i)^{-n}v}=0$
for all $n\in\mathbb{Z}$.
Hence we get
$C^{x_i^{-1}}_{u', v'}=0$
if
\begin{itemize}
\item$i(x_iy_i, u')\neq 0\ \text{and}\ i(x_iy_i, v')=0,\ \text{or}$ 
\item$i(x_iy_i, u')=0\ \text{and}\ i(x_iy_i, v')\neq 0$,
\end{itemize}
by substituting $u=x_iy_iu', v=v', n=0$ into the above equation.

If $i(x_iy_i, u)\neq 0$ and $i(x_iy_i, v)\neq 0$, then there exists $N>0$ such that $C^{x_i^{-1}}_{(x_iy_i)^{n-1}u, (x_iy_i)^{-n}v}=0$ for all $n>N$.
Hence
\begin{eqnarray*}
C^{x_i^{-1}}_{(x_iy_i)^{N-1}u, (x_iy_i)^{-N}v}&=&-\frac{i(x_iy_i, v)}{i(x_iy_i, u)}C^{x_i^{-1}}_{(x_iy_i)^Nu, (x_iy_i)^{-(N+1)}v} =0,\\
C^{x_i^{-1}}_{(x_iy_i)^{N-2}u, (x_iy_i)^{-(N-1)}v}&=&-\frac{i(x_iy_i, v)}{i(x_iy_i, u)}C^{x_i^{-1}}_{(x_iy_i)^{N-1}u, (x_iy_i)^{-N}v} =0
\end{eqnarray*}
and we obtain
$C^{x_i^{-1}}_{(x_iy_i)^{n-1}u, (x_iy_i)^{-n}v}=0$ for all $n\in\mathbb{Z}$ by induction.  Hence $C^{x_i^{-1}}_{u, v}=0$ when $i(x_iy_i, u)\neq 0$ or $i(x_iy_i, v)\neq 0$.

Substituting $(Z_1, Z_2)=(x_i^{-1}, x_j), (x_i^{-1}, y_j)$ into (\ref{compatible_condition}), where $x_j\in\{x_1, \cdots, x_g\}$ and $y_j\in\{y_1, \cdots, \hat y_i, \cdots, y_g\}$, we have
\begin{align*}
0=i(u, x_j)C^{x_i^{-1}}_{x_j^{-1}u, v}+i(v, x_j)C^{x_i^{-1}}_{u, x_j^{-1}v},\\
0=i(u, y_j)C^{x_i^{-1}}_{y_j^{-1}u, v}+i(v, y_j)C^{x_i^{-1}}_{u, y_j^{-1}v}.
\end{align*}
By a similar argument, we have $C^{x_i^{-1}}_{u, v}=0$ if there exists
\begin{itemize}\item $j\in\{1, \cdots, g\}$ such that $i(u, x_j)\neq 0$ or $i(v, x_j)\neq 0$ or
\item$j\in\{1, \cdots\hat i, \cdots, g\}$ such that $i(u, y_j)\neq 0$ or $i(v, y_j)\neq 0$.
\end{itemize}
Substituting $Z_1=y_i^{-1}, Z_2=x_iy_i$ and $u=v=1$ into (\ref{compatible_condition}), we have
\begin{eqnarray*}
C^{x_i}_{1, 1}&=&i(1, x_iy_i)C^{y_i^{-1}}_{(x_iy_i)^{-1}, 1}+i(1, x_iy_i)C^{y_i^{-1}}_{1, (x_iy_i)^{-1}}+i(y_i^{-1}, 1)C^{x_iy_i}_{y_i, 1}\\&&\ +i(y_i^{-1}, 1)C^{x_iy_i}_{1, y_i}=0.
\end{eqnarray*}
Hence $\Delta(x_i^{-1})=0$.
Similarly, we have $\Delta(y_i^{-1})=0$.
Since $\Delta(1)=0$, we obtain
$\Delta(x_i^{a_i}y_i^{b_i})=0$
for all $a_i, b_i\in\mathbb{Z}$.

Now it suffices to show
$\Delta(z_iw_j)=0$
for all $i\neq j, z_i\in\{x_i, y_i\}$ and $w_j\in\{x_j, y_j\}$.
Suppose $\Delta(z_iw_j)=0$ for all $i\neq j, z_i\in\{x_i, y_i\}$ and $w_j\in\{x_j, y_j\}$.
We can show $\Delta$=0 by induction on $g$.
If $g=1$, we have $\Delta=0$ from the above argument.
Suppose $\Delta(x_1^{a_1}y_1^{b_1}\cdots x_{k}^{a_{k}}y_{k}^{b_{k}})=0$ for all $(a_1, b_1, \cdots a_{k}, b_{k})\in\mathbb{Z}^{2k}$ for some $1\leq k\leq g$.
If $(a_1, b_1, \cdots, a_{k}, b_{k})=(0, \cdots, 0)$ or $(a_{k+1}, b_{k+1})=(0, 0)$, we have
$\Delta(x_1^{a_1}y_1^{b_1}\cdots x_{k+1}^{a_{k+1}}y_{k+1}^{b_{k+1}})=0$.
So we may assume $(a_1, b_1, \cdots, a_{k}, b_{k})\neq(0, \cdots, 0)$ and $(a_{k+1}, b_{k+1})\neq(0, 0)$
In that case there exist $i\in\{1, \cdots , k\}, c_i\in\{a_i, b_i\}$ and $d_{k+1}\in\{a_{k+1}, b_{k+1}\}$ such that $c_i\neq 0$ and $d_{k+1}\neq0$. Suppose $c_i=a_{k}$ and $d_{k+1}=a_{k+1}$. Similar argument is valid for other cases.
\begin{eqnarray*}
&&\Delta(x_1^{a_1}y_1^{b_1}\cdots x_g^{a_{k+1}}y_g^{b_{k+1}})\\
&=&a_k^{-1}(\{\Delta(x_1^{a_1}y_1^{b_1}\cdots x_{k}^{a_{k}}y_{k}^{b_{k}-1}), y_{k}x_{k+1}^{a_{k+1}}y_{k+1}^{b_{k+1}}\}
+\{x_1^{a_1}y_1^{b_1}\cdots x_{k}^{a_{k}}y_{k}^{b_{k}-1}, \Delta(y_{k}x_{k+1}^{a_{k+1}}y_{k+1}^{b_{k+1}})\})\\
&=&a_k^{-1}\{x_1^{a_1}y_1^{b_1}\cdots x_{k}^{a_{k}}y_{k}^{b_{k}-1}, \Delta(y_{k}x_{k+1}^{a_{k+1}}y_{k+1}^{b_{k+1}})\}\\
&=&a_k^{-1}a_{k+1}^{-1}(\{x_1^{a_1}y_1^{b_1}\cdots x_{k}^{a_{k}}y_{k}^{b_{k}-1},
\{\Delta(x_{k+1}^{a_{k+1}}y_{k+1}^{b_{k+1}-1}), y_{k}y_{k+1}\}+\{x_{k+1}^{a_{k+1}}y_{k+1}^{b_{k+1}-1}, \Delta(y_{k}y_{k+1})\}\})\\
&=&0.
\end{eqnarray*}
We will show $\Delta(z_iw_j)=0$ for all $i\neq j, z_i\in\{x_i, y_i\}$ and $w_j\in\{x_j, y_j\}$.
Substituting $Z_1=z_i^{a_i}w_j^{b_j}$ and $Z_2=\zeta$ into (\ref{compatible_condition}), where $\zeta\in\{x_1, y_1, \cdots, x_g, y_g\}$ and $i(z_i^{a_i}w_j^{b_j}, \zeta)=0$, we have
$0=i(u, \zeta)C^{z_i^{a_i}w_j^{b_j}}_{\zeta^{-1}u, v}+i(v,\zeta)C^{z_i^{a_i}w_j^{b_j}}_{u, \zeta^{-1}v}$.
So if $i(u, \zeta)\neq 0$ or $i(v, \zeta)\neq 0$, then we have $C^{z_i^{a_i}w_j^{b_j}}_{u, v}=0$.
Therefore if $C^{z_i^{a_i}w_j^{b_j}}_{u, v}\neq 0$ for some $a_i$ and $b_j$, then $u, v\in \langle z_i, w_j\rangle\subset \pi_1(S)^{\mathrm{Ab}}$.
Let us define $\overline z_i$ by
\[\overline z_i=\left\{ \begin{array}{ll} y_i & \text{if}\ z_i=x_i \\ x_i & \text{if}\ z_i=y_i\end{array}\right. .\] 
Substituting $(Z_1, Z_2)=(z_iw_j, \overline z_i)$ and $(z_i, w_j\overline z_i)$ into (\ref{compatible_condition}),
we have
\begin{eqnarray}
i(u, \overline z_i)C^{z_iw_j}_{\overline z_i^{-1}u, v}+i(v, \overline z_i)C^{z_iw_j}_{u, \overline z_i^{-1}v} =i(z_i, \overline z_i)C^{z_i\overline z_iw_j}_{u, v}=i(z_i, u)C^{w_j\overline z_i}_{z_i^{-1}u, v}+i(z_i, v)C^{w_j\overline z_i}_{u, z_i^{-1}v}.\label{eq:zw0}
\end{eqnarray}
Substituting $(\overline z_i^{-1}u, v)=(z_i^{c_i}w_j^{d_j}, z_i^{c'_i}w_j^{d'_j})$ and $(u, \overline z_i^{-1}v)=(z_i^{c_i}w_j^{d_j}, z_i^{c'_i}w_j^{d'_j})$ into (\ref{eq:zw0}),
we have 
\begin{align*}
c_i i(z_i, \overline z_i)C^{z_iw_j}_{z_i^{c_i}w_j^{d_j}, z_i^{c'_i}w_j^{d'_j}}=i(z_i, \overline z_i)C^{w_j\overline z_i}_{\overline z_i z_i^{c_i-1}w_j^{d_j}, z_i^{c'_i}w_j^{d'_j}},\\
c'_i i(z_i, \overline z_i)C^{z_iw_j}_{z_i^{c_i}w_j^{d_j}, z_i^{c'_i}w_j^{d'_j}}=i(z_i, \overline z_i)C^{w_j\overline z_i}_{z_i^{c_i}w_j^{d_j}, \overline z_iz_i^{c'_i-1}w_j^{d'_j}}.
\end{align*}
If $(c_i, c'_i)\neq (1, 0), (0, 1), (0, 0)$, then
\begin{itemize}
\item $c_i\neq 0$ and $C^{w_j\overline z_i}_{\overline z_i z_i^{c_i-1}w_j^{d_j}, z_i^{c'_i}w_j^{d'_j}}=0$, or 
\item $c'_i\neq 0$ and $C^{w_j\overline z_i}_{z_i^{c_i}w_j^{d_j}, \overline z_iz_i^{c'_i-1}w_j^{d'_j}}=0$. 
\end{itemize}
Therefore $C^{z_iw_j}_{z_i^{c_i}w_j^{d_j}, z_i^{c'_i}w_j^{d'_j}}=0$ for $(c_i, c'_i)\neq (1, 0), (0, 1), (0, 0)$. 

Substituting $Z_1=z_iw_j$ and $Z_2=\overline z_i^{\epsilon_i}\overline w_j^{\epsilon_j}$ into (\ref{compatible_condition}), where $\epsilon_i, \epsilon_j\in\{1, -1\}$, $i(z_i, \overline z_i^{\epsilon_i})=1$ and $i(w_j, \overline w_j^{\epsilon_j})=-1$, we have
\begin{eqnarray}
0&=&i(u, \overline z_i^{\epsilon_i}\overline w_j^{\epsilon_j})C^{z_iw_j}_{(\overline z_i^{\epsilon_i}\overline w_j^{\epsilon_j})^{-1}u, v}+i(v, \overline z_i^{\epsilon_i}\overline w_j^{\epsilon_j})C^{z_iw_j}_{u, (\overline z_i^{\epsilon_i}\overline w_j^{\epsilon_j})^{-1}v}\nonumber\\&&+i(z_iw_j, u)C^{\overline z_i^{\epsilon_i}\overline w_j^{\epsilon_j}}_{(z_iw_j)^{-1}u, v}+i(z_iw_j, v)C^{\overline z_i^{\epsilon_i}\overline w_j^{\epsilon_j}}_{u, (z_iw_j)^{-1}v}.\label{eq:zw}
\end{eqnarray}
Substituting $((\overline z_i^{\epsilon_i}\overline w_j^{\epsilon_j})^{-1}u, v)=(z_i^{c_i}w_j^{d_j}, z_i^{c'_i}w_j^{d'_j})$ and $(u, (\overline z_i^{\epsilon_i}\overline w_j^{\epsilon_j})^{-1}v)=(z_i^{c_i}w_j^{d_j}, z_i^{c'_i}w_j^{d'_j})$ into (\ref{eq:zw}), we obtain
\begin{align*}
0=(c_i-d_j)C^{z_iw_j}_{z_i^{c_i}w_j^{d_j}, z_i^{c'_i}w_j^{d'_j}},\\
0=(c'_i-d'_j)C^{z_iw_j}_{z_i^{c_i}w_j^{d_j}, z_i^{c'_i}w_j^{d'_j}}.
\end{align*}
So if $c_i-d_j\neq 0$ or $c'_i-d'_j\neq 0$, then $C^{z_iw_j}_{z_i^{c_i}w_j^{d_j}, z_i^{c'_i}w_j^{d'_j}}=0$.

Therefore, if there exists $(u, v)$ such that $C^{z_iw_j}_{u, v}\neq 0$, then $(u, v)=(z_iw_j, 1), (1, z_iw_j), (1, 1)$.
Substituting $Z_1=z_iw_j\overline w_j, Z_2=\overline w_j^{-1}, u=1$ and $v=1$ into (\ref{compatible_condition}),
\[C^{z_iw_j}_{1, 1}=\frac{1}{i(w_j, \overline w_j^{-1})}(i(1, \overline w_j^{-1})C^{z_iw_j\overline w_j}_{\overline w_j, 1}+i(1, \overline w_j^{-1})C^{z_iw_j\overline w_j}_{1, \overline w_j })=0.\]
By a similar argument to the proof of Theorem \ref{thm:hom}, we obtain
$C^{z_iw_j}_{z_iw_j, 1}=C^{z_i}_{z_i, 1}+C^{w_j}_{w_j, 1}=0$ and
$C^{z_iw_j}_{1, z_iw_j}=C^{z_i}_{1, z_i}+C^{w_j}_{1, w_j}=0$.
This implies $\Delta(z_iw_j)=0$.
This proves Proposition \ref{thm0}.
\end{proof}

\begin{lemm}\label{lem:injective}
There is an isomorphism $\mathbb{R}\rightarrow H^1(W_1(g), \mathbb{R}(1\otimes 1)), r\mapsto [\delta_0(r)]$, where \[\delta_0(r)(Z)=\begin{cases}r\cdot 1\otimes 1\ \text{if}\ Z=1\\ 0\ \text{otherwise}\end{cases}\] for $Z\in\pi_1(S)^{\mathrm{Ab}}$.
\end{lemm}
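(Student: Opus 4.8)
I want to show that the map $\mathbb{R}\to H^1(W_1(g),\mathbb{R}(1\otimes 1))$ sending $r\mapsto[\delta_0(r)]$ is a well-defined isomorphism. The first thing I would check is that $\delta_0(r)$ is indeed a $1$-cocycle with values in the trivial module $\mathbb{R}(1\otimes 1)$. Since $W_1(g)$ acts trivially on $\mathbb{R}(1\otimes 1)$ (because $\{Z,1\otimes 1\}=\{Z,1\}\otimes 1+1\otimes\{Z,1\}=0$), the cocycle condition $Z_0\cdot f(Z_1)-Z_1\cdot f(Z_0)-f(\{Z_0,Z_1\})=0$ collapses to $f(\{Z_0,Z_1\})=0$, i.e. $f$ must vanish on the image of the bracket. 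So I need $\delta_0(r)(\{Z_0,Z_1\})=0$ for all $Z_0,Z_1$; since $\{Z_0,Z_1\}=i(Z_0,Z_1)Z_0Z_1$ and this lies in $\mathbb{R}1$ only when $Z_0Z_1=1$ in $\pi_1(S)^{\mathrm{Ab}}$, in which case $i(Z_0,Z_1)=i(Z_0,Z_0^{-1})=0$, the bracket never produces a nonzero multiple of $1$. Hence $\delta_0(r)$ vanishes on all brackets and is a cocycle, and linearity in $r$ is clear, so the map $\mathbb{R}\to Z^1(W_1(g),\mathbb{R}(1\otimes 1))$ is a well-defined linear map.

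\emph{Injectivity and the coboundary computation.} Next I would show the induced map to cohomology is injective, equivalently that no nonzero $\delta_0(r)$ is a coboundary. A coboundary in $B^1(W_1(g),\mathbb{R}(1\otimes 1))$ has the form $d(m)(Z)=Z\cdot m$ for $m\in\mathbb{R}(1\otimes 1)$; but the action is trivial, so $d(m)=0$ identically. Therefore $B^1(W_1(g),\mathbb{R}(1\otimes 1))=0$ and $H^1=Z^1$, which makes the injectivity immediate once I know $\delta_0(r)\neq0$ for $r\neq0$ — and indeed $\delta_0(r)(1)=r\cdot1\otimes1\neq0$.

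\emph{Surjectivity.} The substantive step is showing every cocycle $f\in Z^1(W_1(g),\mathbb{R}(1\otimes 1))$ equals $\delta_0(f(1))$. As observed, a cocycle is exactly a linear map $f:W_1(g)\to\mathbb{R}(1\otimes1)$ vanishing on all brackets $\{Z_0,Z_1\}=i(Z_0,Z_1)Z_0Z_1$. I would argue that the span of such brackets is precisely $W_1(g)'=\mathbb{R}\{\omega_\gamma\mid\gamma\neq1\}$: given any $Z=x_1^{a_1}y_1^{b_1}\cdots x_g^{a_g}y_g^{b_g}\neq1$, I can write $Z$ as a bracket (up to nonzero scalar) by picking $Z_0,Z_1$ with $Z_0Z_1=Z$ and $i(Z_0,Z_1)\neq0$ — for instance, since $Z\neq1$ some exponent is nonzero, say $a_1\neq0$, and then take $Z_0=x_1 Z$, $Z_1=x_1^{-1}$, for which $Z_0Z_1=Z$ and $i(Z_0,Z_1)=i(x_1Z,x_1^{-1})=-b_1'$ where I adjust to guarantee nonvanishing; more robustly, if $Z\neq 1$ one can always split $Z=Z_0Z_1$ with $i(Z_0,Z_1)\neq 0$ by a standard argument (when $Z$ itself has $i(x_j,Z)\neq0$ for some $j$ take $Z_0=x_j^{\pm1},Z_1=x_j^{\mp1}Z$; otherwise $Z$ is a power of a primitive class and one splits within its symplectic pair, or across two pairs as in Figure~\ref{fig:example1}). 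Hence $f$ vanishes on all of $W_1(g)'$, so $f$ is determined by $f(1)\in\mathbb{R}(1\otimes1)$, giving $f=\delta_0(f(1))$. Combining with injectivity yields the claimed isomorphism.

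\emph{Main obstacle.} The only nontrivial point is the claim that the Goldman bracket on $W_1(g)$ spans $W_1(g)'$, i.e. that every nontrivial abelianized class is a scalar multiple of some $i(Z_0,Z_1)Z_0Z_1$ with nonzero coefficient; this requires a short but genuine case analysis over how $Z$ distributes among the $g$ handles (essentially the same splitting bookkeeping used in Lemma~\ref{lem:k0hom} and Theorem~\ref{thm:hom}). Everything else is formal, resting on the triviality of the $W_1(g)$-action on $\mathbb{R}(1\otimes1)$, which forces $B^1=0$ and reduces the cocycle condition to vanishing on brackets.
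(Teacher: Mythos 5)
Your proof is correct and follows essentially the same route as the paper: triviality of the action on $\mathbb{R}(1\otimes 1)$ reduces the cocycle condition to vanishing on brackets and kills $B^1$, and surjectivity comes from the fact that every $Z\neq 1$ is a bracket up to a nonzero scalar, which the paper realizes by choosing $Z'$ with $i(Z,Z')\neq 0$ and substituting $Z_1=Z'^{-1}$, $Z_2=ZZ'$, $u=v=1$ into its compatibility relation. Your "main obstacle" is really just nondegeneracy of the intersection form (for $Z\neq 1$ pick a symplectic generator $\zeta$ with $i(\zeta,Z)\neq 0$ and split $Z=\zeta\cdot(\zeta^{-1}Z)$), so the case analysis you sketch is shorter than you suggest.
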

\begin{proof}
$\delta_0(r)\in Z^1(W_1(g), \mathbb{R}(1\otimes 1))$ is clear.
Since $(d\alpha)(1)=0$ for all $\alpha\in W_1(g)$, the map is injective.

Let $\Delta\in Z^1(W_1(g), \mathbb{R}(1\otimes 1))$ be given by $\Delta(Z)=C^Z_{1, 1}1\otimes 1$ for $Z\in\pi_1(S)^{\mathrm{Ab}}$.
For all $Z\neq 1\in\pi_1(S)^{\mathrm{Ab}}$, there exists $Z'\in\pi_1(S)^{\mathrm{Ab}}$ such that $i(Z, Z')\neq 0$.
Substituting $Z_1=Z'^{-1}$, $Z_2=ZZ'$ and $u=v=1$ into (\ref{compatible_condition}), we have
\[ i(Z, Z')C^Z_{1, 1}=i(1, ZZ')C^{Z'^{-1}}_{(ZZ')^{-1}, 1}+i(1, ZZ')C^{Z'^{-1}}_{1, (ZZ')^{-1}}+i(Z'^{-1}, 1)C^{ZZ'}_{Z', 1}+i(Z'^{-1}, 1)C^{ZZ'}_{1, Z'}=0.\]
Hence $\Delta=\delta_0(C^1_{1, 1})$.
\end{proof}

By a similar argument to the proof of Theorem \ref{thm:hom} and Theorem \ref{thm:wedge1}, we obtain following Lemma.
\begin{lemm}\label{lem:otimes1}
For a map $k:\mathbb{Z}^{2g}\cong\pi_1(S)^{\mathrm{Ab}}\rightarrow \mathbb{R}$, we can define two maps
\begin{align*}
\Delta_k^l&: W_1(g) \rightarrow W_1(g) \otimes 1,
\Delta_k^l(Z)=k(Z)Z\otimes 1,\\
\Delta_k^r&: W_1(g) \rightarrow 1 \otimes W_1(g),
\Delta_k^r(Z)=k(Z)1\otimes Z,
\end{align*}
for $Z\in\pi_1(S)^{\mathrm{Ab}}$.
Then
\begin{enumerate}\item $\Delta_k^l, \Delta_k^r\in Z^1(W_1(g), W_1(g)\otimes W_1(g))$ if and only if there exists a homomorphism $k':\mathbb{Z}^{2g}\rightarrow \mathbb{R}$ such that $k|_{\mathbb{Z}^{2g}\backslash(0, \cdots, 0)}=k'|_{\mathbb{Z}^{2g}\backslash(0, \cdots, 0)}$.
\item We have two isomorphisms $\mathrm{Hom}(\mathbb{Z}^{2g}, \mathbb{R})\rightarrow H^1(W_1(g), W_1(g)'\otimes 1), k\mapsto [\Delta_k^l]$ and
$\mathrm{Hom}(\mathbb{Z}^{2g}, \mathbb{R})\rightarrow H^1(W_1(g), 1\otimes W_1(g)'), k\mapsto [\Delta_k^r]$.
\end{enumerate}
\end{lemm}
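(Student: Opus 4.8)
The plan is to transcribe the proofs of Theorem \ref{thm:hom} and Theorem \ref{thm:wedge1}, using that the coefficient bookkeeping in those proofs never used the wedge symbol in an essential way. Concretely, I would first record the $W_1(g)$-module isomorphisms
\[
W_1(g)'\otimes 1\;\cong\;W_1(g)'\;\cong\;1\otimes W_1(g)'\;\cong\;W_1(g)'\wedge 1\;=\;W_1(g)\wedge 1,
\]
all induced by the identity on the nontrivial tensor/wedge slot, the last equality using $1\wedge 1=0$. Under the first and third of these, the cochains $\Delta_k^l$ and $\Delta_k^r$ go over to the cochain $\Delta_k$ of Section \ref{section1} (the value at $1\in\pi_1(S)^{\mathrm{Ab}}$ lands in the complementary summand $\mathbb{R}(1\otimes 1)$ of Lemma \ref{lem:injective} and is projected away). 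Granting the identifications, part (1) is precisely Theorem \ref{thm:hom} and part (2) is precisely Theorem \ref{thm:wedge1}, both applied to the module $W_1(g)'$.

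If one prefers a direct argument, part (1) is checked by pushing the cocycle condition forward along the Goldman Lie algebra homomorphism $\mathbb{Z}[\hat\pi]\to W_1(g)$ exactly as in Theorem \ref{thm:hom}: since all intersection loops $[a_pb_p]$ have the common image $\alpha\beta$ in $\pi_1(S)^{\mathrm{Ab}}$ one gets $\Delta_k^l\{\omega_\alpha,\omega_\beta\}=k(\alpha\beta)\{\omega_\alpha,\omega_\beta\}\otimes 1$ and $\{\Delta_k^l(\omega_\alpha),\omega_\beta\}+\{\omega_\alpha,\Delta_k^l(\omega_\beta)\}=(k(\alpha)+k(\beta))\{\omega_\alpha,\omega_\beta\}\otimes 1$, so the cocycle condition is equivalent to $k(\alpha\beta)=k(\alpha)+k(\beta)$ whenever $i(\alpha,\beta)\neq0$, and Lemma \ref{lem:k0hom} furnishes the homomorphism $k'$ (the converse being immediate); the argument for $\Delta_k^r$ is identical. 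For part (2), injectivity of $k\mapsto[\Delta_k^l]$ follows since for $\beta=\sum_{u\neq1}C_u\,u\otimes1$ the coboundary $d\beta(Z)=\sum_{u\neq1}C_u\,i(Z,u)\,Zu\otimes1$ has no $Z\otimes1$--component, forcing $k(Z)=0$ for $Z\neq1$; surjectivity is obtained by rerunning the induction on $g$ of Theorem \ref{thm:wedge1} with $W_1(g)\wedge1$ replaced by $W_1(g)'\otimes1$ (resp.\ $1\otimes W_1(g)'$), subtracting first $\Delta_C^l$ for the homomorphism $C(Z):=C^Z_{Z,1}$ and then checking $C^Z_{Zu,1}\,Zu\otimes1=\tfrac{C_{x_1}}{d_1}\,d(u\otimes1)(Z)$; that computation manipulates only the coefficients with one tensor slot fixed at $1$ and never uses antisymmetry, so it goes through verbatim.

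I do not expect a genuine conceptual obstacle here — the lemma is a transcription — but the one thing requiring care, and the only real work, is the bookkeeping: in the tensor setting $W_1(g)\otimes W_1(g)$ carries the three nontrivial summands $W_1(g)'\otimes1$, $1\otimes W_1(g)'$ and $W_1(g)'\otimes W_1(g)'$, so the argument must be run separately on each of the first two, and the scalar summand $\mathbb{R}(1\otimes1)$ (handled in Lemma \ref{lem:injective}) must be split off consistently at every step; it is also worth double-checking that the long induction in the proof of Theorem \ref{thm:wedge1} indeed nowhere invokes $u\wedge v=-v\wedge u$, which is what legitimizes the transcription.
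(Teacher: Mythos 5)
Your proof is correct and takes essentially the same route as the paper, which likewise disposes of part (1) by repeating the argument of Theorem \ref{thm:hom} and of part (2) by rerunning the proof of Theorem \ref{thm:wedge1} with the wedge replaced by the relevant tensor summand. The only step the paper makes explicit that you leave implicit is that the cocycle identity $\{Z,\Delta(1)\}=0$ forces $C^1_{u,v}=0$ for $(u,v)\neq(1,1)$, i.e.\ $\Delta(1)\in\mathbb{R}(1\otimes 1)$; this is exactly what your appeal to the module decomposition is absorbing.
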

\begin{proof}
By a similar argument to the proof of Theorem \ref{thm:hom}, we have Lemma \ref{lem:otimes1}.1.
Let $\Delta\in Z^1(W_1(g), W_1(g)\otimes W_1(g))$ be given by $\Delta(Z)=\sum_{u, v\in\pi_1(S)^{\mathrm{Ab}}}C^Z_{u, v}u\otimes v$ for $Z\in\pi_1(S)^{\mathrm{Ab}}$.
Since
$\Delta\{Z, 1\}=\{\Delta(Z), 1\}+\{Z, \Delta(1)\}$, we have
$\{Z, \Delta(1)\}=0$ for all $Z\in\pi_1(S)^{\mathrm{Ab}}$.
For $Z=x_1^{a_1}y_1^{b_1}\cdots x_g^{a_g}y_g^{b_g}$, we obtain
\[0=\{Z, \Delta(1)\}
=\sum_{u, v\in\pi_1(S)^{\mathrm{Ab}}}C^1_{u, v}(i(Z, u)Zu\otimes v+i(Z, v)u\otimes Zv).\]
If we take $a_1, b_1, \cdots, a_g, b_g>\mathrm{max}_{C^1_{u, v}\neq 0}\{\mathrm{deg}_{x_i}u, \mathrm{deg}_{x_i}v, \mathrm{deg}_{y_i}u, \mathrm{deg}_{y_i}v\ |\ i=1, \cdots, g\}-\mathrm{min}_{C^1_{u, v}\neq 0}\{\mathrm{deg}_{x_i}u, \mathrm{deg}_{x_i}v, \mathrm{deg}_{y_i}u, \mathrm{deg}_{y_i}v\ |\ i=1, \cdots, g\}$,
then the coefficient of $Zu\otimes v$ is
$C^1_{u, v}i(Z, u)=0$
and the coefficient of $u\otimes Zv$ is
$C^1_{u, v}i(Z, v)=0$.
Hence $C^1_{u, v}=0$ for $(u, v)\neq (1, 1)$.
By a similar argument to the proof of Theorem \ref{thm:wedge1}, we have Lemma \ref{lem:otimes1}.2.
\end{proof}

\begin{theo}\label{thm:iso_g}
We have an isomorphism
\begin{align*}
\mathbb{R}\times\mathrm{Hom}(\mathbb{Z}^{2g}, \mathbb{R})^2\cong H^1(W_1(g), W_1(g)\otimes W_1(g)),
(r, k_l, k_r)\mapsto [\delta_0(r)+\Delta^l_{k_l}+\Delta^r_{k_r}].
\end{align*}
\end{theo}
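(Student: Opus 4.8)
The plan is to compute $H^1(W_1(g),W_1(g)\otimes W_1(g))$ one piece at a time along the $W_1(g)$-module decomposition $W_1(g)\otimes W_1(g)=\mathbb{R}(1\otimes1)\oplus(W_1(g)'\otimes1)\oplus(1\otimes W_1(g)')\oplus(W_1(g)'\otimes W_1(g)')$. Since the Chevalley--Eilenberg cochain complex is additive in the coefficient module, this yields a direct sum decomposition of $H^1$, and by Lemma~\ref{lem:injective} and Lemma~\ref{lem:otimes1} the first three summands are $\mathbb{R}$, $\mathrm{Hom}(\mathbb{Z}^{2g},\mathbb{R})$ and $\mathrm{Hom}(\mathbb{Z}^{2g},\mathbb{R})$, with representatives $[\delta_0(r)]$, $[\Delta^l_{k_l}]$ and $[\Delta^r_{k_r}]$. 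Once one knows the fourth summand $H^1(W_1(g),W_1(g)'\otimes W_1(g)')$ vanishes, the displayed map $(r,k_l,k_r)\mapsto[\delta_0(r)+\Delta^l_{k_l}+\Delta^r_{k_r}]$ is the direct sum of these three isomorphisms, hence itself an isomorphism: it is injective because any coboundary $d\alpha$ has zero $\mathbb{R}(1\otimes1)$-component (as $d\beta(1)=0$ for every $\beta$) and respects the decomposition, so matching components against the three lemmas forces $r=k_l=k_r=0$; it is surjective because any cocycle splits into its four components, each one caught by a lemma or killed by the vanishing.

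So it remains to prove $H^1(W_1(g),W_1(g)'\otimes W_1(g)')=0$. Let $\Delta\in Z^1(W_1(g),W_1(g)'\otimes W_1(g)')$. Setting $Z_2=1$ in (\ref{compatible_condition}) gives $\{Z,\Delta(1)\}=0$ for all $Z$, and the computation already carried out in the proof of Lemma~\ref{lem:otimes1} (take $Z$ with exponents large compared with those occurring in $\Delta(1)$) shows that $\Delta(1)$ has no component outside $\mathbb{R}(1\otimes1)$; since $\Delta(1)\in W_1(g)'\otimes W_1(g)'$ this forces $\Delta(1)=0$. By Proposition~\ref{thm0} a $1$-cocycle is determined by $(\Delta(1),\Delta(x_1),\Delta(y_1),\dots,\Delta(x_g),\Delta(y_g))$, so it suffices to produce $\alpha\in W_1(g)'\otimes W_1(g)'$ with $\{x_i,\alpha\}=\Delta(x_i)$ and $\{y_i,\alpha\}=\Delta(y_i)$ for all $i$: then $\Delta-d\alpha$ is a cocycle vanishing on $1$ and on every generator, hence $\Delta-d\alpha=0$ and $[\Delta]=0$.

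To construct such an $\alpha$ I would grade $W_1(g)\otimes W_1(g)$ by $\pi_1(S)^{\mathrm{Ab}}$, the total degree of $m\otimes n$ being $mn$; since $\{Z,-\}$ raises total degree by $Z$, the homogeneous components $\Delta_c$ of $\Delta$ (where $\Delta_c(Z)$ is the part of $\Delta(Z)$ of total degree $Zc$) are again cocycles valued in $W_1(g)'\otimes W_1(g)'$, only finitely many $\Delta_c$ are nonzero (as $\Delta$ is recovered from the finitely supported $\Delta(x_i),\Delta(y_i)$), and one solves the problem for each $c$ separately with $\alpha_c$ homogeneous of total degree $c$. Writing $\alpha_c=\sum_{mn=c}\alpha_{m,n}\,m\otimes n$ and $\Delta_c(\zeta)=\sum C^\zeta_{m,n}\,m\otimes n$, the equations $\{\zeta,\alpha_c\}=\Delta_c(\zeta)$ become, for each generator $\zeta$, the linear recursions $C^\zeta_{m,n}=i(\zeta,m)\alpha_{\zeta^{-1}m,n}+i(\zeta,n)\alpha_{m,\zeta^{-1}n}$; the compatibility relations (\ref{compatible_condition}), applied with $Z_1,Z_2$ products of two generators, are exactly what makes this over-determined system consistent, and the hypothesis $m\neq1$, $n\neq1$ for every $m\otimes n$ occurring ensures that for each relevant index some generator $\zeta$ has $i(\zeta,m)\neq0$ or $i(\zeta,n)\neq0$, so that the recursions actually determine the $\alpha_{m,n}$ and force all but finitely many of them to vanish. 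As in the proofs of Proposition~\ref{thm0} and Theorem~\ref{thm:wedge1}, I would organise this as an induction on $g$, treating $g=1$ by the explicit recursion and reducing the genus-$g$ case to genus $g-1$ by splitting off the last handle.

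The main obstacle is precisely this last step: verifying that the system $\{x_i,\alpha\}=\Delta(x_i)$, $\{y_i,\alpha\}=\Delta(y_i)$ $(i=1,\dots,g)$ has a solution that is a genuine, finitely supported element of $W_1(g)'\otimes W_1(g)'$. Controlling finiteness and checking that the numerous recursions are mutually consistent via (\ref{compatible_condition}) is the real computational content, closely parallel to --- but more involved than --- the manipulations in the proof of Proposition~\ref{thm0}.
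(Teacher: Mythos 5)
Your overall architecture is exactly the paper's: decompose $W_1(g)\otimes W_1(g)$ into the four submodules, cite Lemma \ref{lem:injective} and Lemma \ref{lem:otimes1} for the first three summands, reduce the fourth to showing $H^1(W_1(g),W_1(g)'\otimes W_1(g)')=0$, and reduce that (via $\Delta(1)=0$ and Proposition \ref{thm0}) to producing $\alpha\in W_1(g)'\otimes W_1(g)'$ with $d\alpha$ agreeing with $\Delta$ on the $2g$ generators. All of that is correct and matches the paper.

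The genuine gap is the step you yourself flag as ``the main obstacle'': you never actually prove that the required $\alpha$ exists. Your homogeneous-decomposition scheme (grading $m\otimes n$ by the product $mn$ and writing the equations $C^{\zeta}_{m,n}=i(\zeta,m)\alpha_{\zeta^{-1}m,n}+i(\zeta,n)\alpha_{m,\zeta^{-1}n}$) is a reasonable framework, but the assertion that the cocycle identities make this over-determined infinite linear system consistent \emph{and} admit a finitely supported solution is precisely the theorem to be proved, not a verification one can wave at. This is where the paper spends most of its effort: Lemmas \ref{lem:y0}--\ref{lem:12} proceed by induction on the genus, first subtracting explicit coboundaries to normalize $\Delta(y_i)$ and $\Delta(x_i)$, then introducing lexicographic orders on monomials $u\otimes v$ and showing that a hypothetical nonzero maximal coefficient leads to a contradiction --- via growth-rate comparisons of the two sides of the cocycle identity in $b$ (the $\mathcal{O}(b\cdot i(u_0,y)^{(1+\lambda)b})$ versus $\mathcal{O}(b\cdot i(u_0,y)^{\lambda b})$ estimates), together with separate case analyses for $i(u_0,y)\in\{0,1,-1\}$ and parity tricks. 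In particular, the paper does not merely ``solve for $\alpha$'': it shows that after the normalizations certain coefficients of $\Delta$ on the generators are forced to vanish outright, which is a nontrivial consequence of the cocycle condition and is not visible from the recursion alone. Without some version of these arguments your proof of $H^1(W_1(g),W_1(g)'\otimes W_1(g)')=0$, and hence of the theorem, is incomplete.
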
 

As a corollary, we obtain following isomorphism.
\begin{coro}
$H^1(W_1(g), W_1(g)\wedge W_1(g))\cong \mathrm{Hom}(\mathbb{Z}^{2g}, \mathbb{R})$.
\end{coro}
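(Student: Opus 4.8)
The plan is to deduce the corollary from Theorem \ref{thm:iso_g} by keeping careful track of how the wedge product sits inside the tensor product. The decomposition of $W_1(g)$-modules $W_1(g)\otimes W_1(g)=\mathbb{R}(1\otimes 1)\oplus(W_1(g)'\otimes 1)\oplus(1\otimes W_1(g)')\oplus(W_1(g)'\otimes W_1(g)')$ is compatible with the flip $\tau(u\otimes v)=v\otimes u$: the flip preserves $\mathbb{R}(1\otimes 1)$ and $W_1(g)'\otimes W_1(g)'$ and exchanges $W_1(g)'\otimes 1$ with $1\otimes W_1(g)'$. Since $W_1(g)\wedge W_1(g)$ is the $(-1)$-eigenspace of $\tau$, we get $W_1(g)\wedge W_1(g)=(W_1(g)'\wedge 1)\oplus(W_1(g)'\wedge W_1(g)')$, exactly the decomposition already recorded in Section \ref{section1}, where $W_1(g)'\wedge 1$ is the antidiagonal in $(W_1(g)'\otimes 1)\oplus(1\otimes W_1(g)')$ and $W_1(g)'\wedge W_1(g)'$ is the $(-1)$-eigenspace of $\tau$ restricted to $W_1(g)'\otimes W_1(g)'$.

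First I would assemble the computation of $H^1(W_1(g), W_1(g)\wedge W_1(g))$ from the three summand computations obtained along the way. From Theorem \ref{thm:wedge1}.1 we have $H^1(W_1(g), W_1(g)'\wedge 1)\cong H^1(W_1(g), W_1(g)\wedge 1)\cong\mathrm{Hom}(\mathbb{Z}^{2g}, \mathbb{R})$, and from Theorem \ref{thm:wedge1}.2 the map into $H^1(W_1(g), W_1(g)\wedge W_1(g))$ induced by $1\hookrightarrow W_1(g)$ is injective. So it remains to show $H^1(W_1(g), W_1(g)'\wedge W_1(g)')=0$. For this I would use Proposition \ref{thm0} together with the vanishing statements established inside its proof: Proposition \ref{thm0} shows that a cocycle $\Delta\in Z^1(W_1(g), W_1(g)\otimes W_1(g))$ is determined by $\Delta(1),\Delta(x_1),\dots,\Delta(y_g)$, and Theorem \ref{thm:iso_g} identifies the whole cohomology. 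Restricting to the $\tau$-invariant summand $W_1(g)'\otimes W_1(g)'$, the proof of Theorem \ref{thm:iso_g} shows $H^1(W_1(g), W_1(g)'\otimes W_1(g)')=0$; intersecting with the $(-1)$-eigenspace of $\tau$ (which is an exact functor on $\mathbb{R}$-modules, hence commutes with taking cohomology of the cochain complex) gives $H^1(W_1(g), W_1(g)'\wedge W_1(g)')=0$ as well.

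Putting these together, $H^1(W_1(g), W_1(g)\wedge W_1(g))\cong H^1(W_1(g), W_1(g)'\wedge 1)\oplus H^1(W_1(g), W_1(g)'\wedge W_1(g)')\cong\mathrm{Hom}(\mathbb{Z}^{2g}, \mathbb{R})\oplus 0\cong\mathrm{Hom}(\mathbb{Z}^{2g}, \mathbb{R})$, and under this identification the class $[\Delta_k]$ on the left corresponds to $k$ on the right, so the map $k\mapsto[\Delta_k]$ is the asserted isomorphism. The main obstacle is the surjectivity half: given an arbitrary $1$-cocycle $\Delta$ with values in $W_1(g)\wedge W_1(g)$, one must show that after subtracting a suitable $\Delta_k$ and a coboundary $d\alpha$ with $\alpha\in W_1(g)\wedge 1$, the remainder takes values in $W_1(g)'\wedge W_1(g)'$ and is then itself a coboundary — this is precisely the content flagged at the start of Section \ref{section1}, and it is where one must invoke Proposition \ref{thm0} to reduce to finitely many values and then run the vanishing arguments of Theorem \ref{thm:iso_g}. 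The injectivity half is comparatively routine: it follows from the injectivity in Theorem \ref{thm:wedge1}.1 combined with Theorem \ref{thm:wedge1}.2, since $d\alpha$ with $\alpha\in W_1(g)$ restricted to the $W_1(g)'\wedge 1$ component already forces $k=0$ by the coefficient-of-$Z\wedge 1$ argument given there.
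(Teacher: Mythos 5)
Your argument is correct, but it assembles the ingredients differently from the paper. The paper's proof of the corollary is a purely formal deduction from the single statement of Theorem \ref{thm:iso_g}: it introduces the maps $\iota(k)=(0,k,-k)$, $s(u\wedge v)=u\otimes v-v\otimes u$ and $p(u\otimes v)=\tfrac12 u\wedge v$, observes that $\{(r,k,k)\}$ lies in $\ker(p_*\circ\phi)$ to get surjectivity of $p_*\circ\phi\circ\iota$, and uses $s_*\circ p_*\circ\phi\circ\iota=\phi\circ\iota$ together with the injectivity of $s_*$ to get injectivity; Theorem \ref{thm:wedge1} is never invoked there. You instead decompose the coefficient module as $W_1(g)\wedge W_1(g)=(W_1(g)'\wedge 1)\oplus(W_1(g)'\wedge W_1(g)')$, quote Theorem \ref{thm:wedge1}.1 for the first summand, and for the second summand realize $W_1(g)'\wedge W_1(g)'$ as the $(-1)$-eigenspace of the flip inside $W_1(g)'\otimes W_1(g)'$, so that $H^1(W_1(g),W_1(g)'\wedge W_1(g)')$ is a direct summand of $H^1(W_1(g),W_1(g)'\otimes W_1(g)')=0$. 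This is legitimate (the flip commutes with the module action, and in characteristic zero eigenspace projection is split, so cohomology of a summand is a summand of cohomology; note also that Theorem \ref{thm:wedge1}.2 is then redundant, since a module direct summand automatically induces a split injection on $H^1$). The trade-off is that your route must reach inside the proof of Theorem \ref{thm:iso_g} for the unnumbered intermediate fact $H^1(W_1(g),W_1(g)'\otimes W_1(g)')=0$ (Lemmas \ref{lem:y0}--\ref{lem:12} combined with Proposition \ref{thm0} and $\Delta(1)=0$), whereas the paper's version only needs the final statement of Theorem \ref{thm:iso_g}; in exchange, your version makes the location of the class $[\Delta_k]$ inside the answer completely explicit, which the paper's kernel computation leaves implicit.
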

\begin{proof}
Let $\phi$ be the isomorphism in Theorem \ref{thm:iso_g}.
Define three linear maps as following.
\begin{align*}
\iota:\mathrm{Hom}(\mathbb{Z}^{2g}, \mathbb{R})\rightarrow \mathbb{R}\times\mathrm{Hom}(\mathbb{Z}^{2g}, \mathbb{R})^2, k\mapsto (0, k, -k),\\
s:W_1(g)\wedge W_1(g)\rightarrow W_1(g)\otimes W_1(g), u\wedge v\mapsto u\otimes v-v\otimes u,\\
p:W_1(g)\otimes W_1(g)\rightarrow W_1(g)\wedge W_1(g), u\otimes v\mapsto \frac{1}{2}u\wedge v.
\end{align*}
Consider the composition $p_*\circ \phi\circ\iota$, where
$p_*$ denotes the induced map $H^1(W_1(g), W_1(g)\otimes W_1(g))\rightarrow H^1(W_1(g), W_1(g)\wedge W_1(g))$.
Since $\{(r, k, k)\ |\ r\in\mathbb{R}, k\in\mathrm{Hom}(\mathbb{Z}^{2g}, \mathbb{R})\}\subset\ker (p_*\circ\phi)$, the composition $p_*\circ\phi\circ\iota$ is surjective.
Moreover $p_*\circ\phi\circ\iota$ is injective from $s_*\circ p_*\circ\phi\circ\iota=\phi\circ\iota$ and the injectivity of $s_*$.
\end{proof}

We begin by proving Theorem \ref{thm:iso_g} for the case $g=1$.
After that we will prove Theorem \ref{thm:iso_g} for a general genus $g$, by almost the same way for $g=1$.

We show $H^1(W_1(g), W_1(g)'\otimes W_1(g)')\cong 0$ by showing, for all $\Delta\in Z^1(W_1(g), W_1(g)'\otimes W_1(g)')$, there exists $\alpha\in W_1(g)'\otimes W_1(g)'$ such that $\Delta(x_i)=d\alpha(x_i)$ and $\Delta(y_i)=d\alpha(y_i)$ for all $1\leq i\leq g$. 
For $\Delta \in Z^1(W_1(g), W_1(g)'\otimes W_1(g)')$, we have $\Delta(1)=0$ from $\{\Delta(1), Z\}=0$ for all $Z\in \pi_1(S)^{\mathrm{Ab}}$.

\begin{lemm}\label{lem:y0}
Suppose $g=1$ and $\Delta\in Z^1(W_1(1), W_1(1)'\otimes W_1(1)')$.
If $\Delta(y)=0$, then there exists some element $\alpha\in W_1(1)'\otimes W_1(1)'$ such that the difference $\Delta':=\Delta-d\alpha$ satisfies $\Delta'(x)=0$ and $\Delta'(y)=0$.
\end{lemm}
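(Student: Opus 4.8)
The plan is to reduce the lemma to a single explicit construction. For any $\beta\in W_1(1)'\otimes W_1(1)'$ we have $d\beta(1)=\{1,\beta\}=0$, $d\beta(y)=\{y,\beta\}$ and $d\beta(x)=\{x,\beta\}$; hence it suffices to produce $\alpha\in W_1(1)'\otimes W_1(1)'$ with $\{y,\alpha\}=0$ and $\{x,\alpha\}=\Delta(x)$, for then $\Delta':=\Delta-d\alpha$ satisfies $\Delta'(x)=\Delta'(y)=0$ (and in fact $\Delta'=0$ by Proposition \ref{thm0}, since $\Delta'(1)=\Delta(1)=0$ as well). So the argument splits into: (i) describing $\ker(\{y,-\})$ on $W_1(1)'\otimes W_1(1)'$; (ii) showing $\Delta(x)$ lies in the image of $\{x,-\}$ restricted to that kernel; and (iii) choosing $\alpha$ accordingly.

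For step (i) I would grade $W_1(1)\otimes W_1(1)$ by the pair of $x$-degrees of the two tensor factors: let $M_{t,p}$ be the span of the elements $x^{p}y^{q}\otimes x^{t-p}y^{s}$ of $W_1(1)'\otimes W_1(1)'$ with these prescribed $x$-degrees. The operator $\{y,-\}$ preserves each $M_{t,p}$, and after identifying $M_{t,p}$ with a subspace of the Laurent polynomial ring $\mathbb{R}[z_{1}^{\pm1},z_{2}^{\pm1}]$ via the $y$-exponents, it is exactly multiplication by $-(p\,z_{1}+(t-p)\,z_{2})$. This linear form is a unit when exactly one of $p,t-p$ vanishes, a nonzero nonunit when both are nonzero, and $0$ when $p=t-p=0$. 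Consequently $\ker(\{y,-\})\cap(W_1(1)'\otimes W_1(1)')=M_{0,0}=V_y\otimes V_y$ with $V_y:=\mathrm{span}_{\mathbb{R}}\{\,y^{n}\mid n\ne0\,\}$, and a direct computation gives $\{x,y^{q}\otimes y^{s}\}=q\,xy^{q}\otimes y^{s}+s\,y^{q}\otimes xy^{s}$, so $\{x,-\}$ maps $V_y\otimes V_y$ injectively into $M_{1,0}\oplus M_{1,1}$.

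Step (ii) is the heart of the proof. Using compatibility together with $\Delta(y)=0$ and the identity $\{xy^{-n},y\}=xy^{1-n}$, one obtains $\Delta(xy^{1-n})=-\{y,\Delta(xy^{-n})\}$, and hence $\Delta(x)\in\bigcap_{n\ge1}\mathrm{im}\bigl(\{y,-\}^{n}\bigr)$. Feeding in the block description of step (i): on a block $M_{t,p}$ where $\{y,-\}$ is multiplication by a nonunit, the intersection of the images of all powers is $0$ (specialize one $z_{i}$ to a nonzero real number and compare degrees); on a block where $\{y,-\}$ is a unit it is the whole block; and on $M_{0,0}$ it is $0$. Therefore $\Delta(x)\in\bigoplus_{t\ne0}(M_{t,0}\oplus M_{t,t})$, that is, in every monomial occurring in $\Delta(x)$ one of the two tensor factors is a pure nonzero power of $y$. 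To force the total $x$-degree down to $t=1$, to eliminate the leftover monomials $x\otimes y^{s}$ and $y^{q}\otimes x$, and to match the remaining coefficients, I would then plug a short explicit list of choices of $(Z_{1},Z_{2})$ — for instance $(x,y)$, $(x^{2}y^{-1},x^{-1}y)$, and similar variants with one negative exponent — into the coefficient form (\ref{compatible_condition}) of the cocycle condition, extracting scalar identities exactly as in the proofs of Theorem \ref{thm:hom} and Proposition \ref{thm0}. The conclusion is that $\Delta(x)=\sum_{q,s\ne0}c_{q,s}\bigl(q\,xy^{q}\otimes y^{s}+s\,y^{q}\otimes xy^{s}\bigr)$ for suitable $c_{q,s}\in\mathbb{R}$, i.e.\ $\Delta(x)=\{x,\alpha\}$ with $\alpha:=\sum_{q,s\ne0}c_{q,s}\,y^{q}\otimes y^{s}$.

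For step (iii), this $\alpha$ lies in $V_y\otimes V_y\subset W_1(1)'\otimes W_1(1)'$ and satisfies $\{y,\alpha\}=0$ by step (i) and $\{x,\alpha\}=\Delta(x)$ by construction; thus $\Delta':=\Delta-d\alpha$ kills $x$ and $y$, as required. I expect step (ii) to be the real obstacle: the ``finite support'' argument cleanly controls the coarse shape of $\Delta(x)$, but isolating the total-$x$-degree-one contribution and pinning down the exact coefficient relations is delicate precisely because $\{x,-\}$, unlike $\{y,-\}$, does not respect the factorwise $x$-grading on which the whole analysis rests.
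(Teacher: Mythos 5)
Your reduction of the lemma to the single claim $\Delta(x)\in\{x,-\}(V_y\otimes V_y)$ is correct (since $\Delta'(y)=-\{y,\alpha\}$ forces $\alpha\in\ker\{y,-\}$), and the first half of your step (ii) is a genuinely nice argument, cleaner than what the paper does at the corresponding point: from $\Delta(xy^{1-n})=-\{y,\Delta(xy^{-n})\}$ you get $\Delta(x)\in\bigcap_{n}\mathrm{im}\bigl(\{y,-\}^{n}\bigr)$, and since $\{y,-\}$ preserves the bigrading by $x$-degrees and acts on the block $M_{t,p}$ as multiplication by $-(pz_1+(t-p)z_2)$ (a prime non-unit when $p(t-p)\neq 0$, zero when $p=t-p=0$), every monomial of $\Delta(x)$ must have exactly one tensor factor equal to a pure nonzero power of $y$. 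This replaces the paper's lexicographic maximal-term analysis for the analogous structural statement and is sound as written.

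The second half of step (ii), however, is a genuine gap, and it is where essentially all of the difficulty of the lemma lives. After your structural reduction you must still (a) kill every block $M_{t,0}$ and $M_{t,t}$ with $t\neq 1$, e.g.\ terms $x^{2}y^{q}\otimes y^{s}$ or $x^{-1}y^{q}\otimes y^{s}$; (b) kill the monomials $x\otimes y^{s}$ and $y^{q}\otimes x$; and (c) prove the proportionality $s\cdot(\text{coefficient of }xy^{q}\otimes y^{s})=q\cdot(\text{coefficient of }y^{q}\otimes xy^{s})$ needed for $\Delta(x)$ to lie in $\{x,-\}(V_y\otimes V_y)$ rather than being an arbitrary element of $M_{1,0}\oplus M_{1,1}$. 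You propose to do all of this by substituting ``a short explicit list of $(Z_1,Z_2)$'' into (\ref{compatible_condition}), but no such list is exhibited, and it is not clear a finite one suffices: each instance of (\ref{compatible_condition}) couples the unknown coefficients of $\Delta(x)$ to the equally unknown coefficients of $\Delta(Z_1Z_2)$, $\Delta(Z_1)$ and $\Delta(Z_2)$, so the constraints must be propagated through infinitely many coefficients. That is precisely why the paper's proof of this lemma spends most of its length on asymptotic comparisons of the coefficient of the maximal term in $\Delta(x^{2}y^{b+b'})$ for large $b,b'$ (to exclude $|i(u_0,y)|>1$ and $i(u_0,y)\in\{0,-1\}$) and on a separate large-exponent argument to exclude the residual term $x\otimes y^{d}$. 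You correctly flag this as ``the real obstacle'', but naming the obstacle is not the same as overcoming it; as it stands the proof is incomplete at its central step.
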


\begin{proof}
Set
$\Delta(Z)=\sum_{u, v\in\pi_1(S)^{\mathrm{Ab}}}C^Z_{u, v}u\otimes v$
for $Z\in\pi_1(S)^{\mathrm{Ab}}$.
For $b>0$, we have
\begin{eqnarray}
&&\Delta(xy^b)=\{\Delta(xy^{b-1}), y\}=\{\{\Delta(xy^{b-2}), y\}, y\}=\cdots=\{\cdots\{\{\Delta(x), \overbrace{y\}, y\}, \cdots , y}^b\}\nonumber\\&=& \sum_{u, v\in\pi_1(S)^{\mathrm{Ab}}}C^x_{u, v}\sum_{k, l\geq 0, k+l=b}i(u, y)^ki(v, y)^l\begin{pmatrix}b\\k\end{pmatrix} uy^k\otimes vy^l,\label{deltaxyb}
\end{eqnarray}
\begin{eqnarray}
&&b\Delta(x^2y^b) =\{\Delta(x), xy^b\}+\{x, \Delta(xy^b)\}\nonumber\\
&=&\sum_{u, v\in\pi_1(S)^{\mathrm{Ab}}}C^x_{u, v}\{u\otimes v, xy^b\}\nonumber\\&&+\sum_{u, v\in\pi_1(S)^{\mathrm{Ab}}}C^x_{u, v}\sum_{k, l\geq 0, k+l=b}i(u, y)^ki(v, y)^l\begin{pmatrix}b\\k\end{pmatrix} \{x, uy^k\otimes vy^l\}.\label{deltax2y}
\end{eqnarray}
Define a lexicographic order on $\{x^ay^b\otimes x^cy^d\ |\ a, b, c, d\in\mathbb{Z}\}$ as follows.
$x^ay^b\otimes x^cy^d<x^{a'}y^{b'}\otimes x^{c'}y^{d'}$ if and only if
\begin{itemize}\item$a<a'\ \text{or}$
\item$a=a'\ \text{and}\ b<b',\ \text{or}$
\item$a=a'\ \text{and}\ b=b'\ \text{and}\ c<c',\ \text{or}$
\item$a=a'\ \text{and}\ b=b'\ \text{and}\ c=c'\ \text{and}\ d<d'$.\end{itemize}

Let $u_0\otimes v_0$ be the maximum element satisfying $C^x_{u_0, v_0}\neq 0$ in the order.
For $b, b'>0$ satisfying $b\neq b'$, the maximum term in
$i(xy^b, xy^{b'})\Delta(x^2y^{b+b'})=\{\Delta(xy^b), xy^{b'}\}+\{xy^b, \Delta(xy^{b'})\}$
is $xy^{b+b'}u_0\otimes v_0$.
The coefficient of $xy^{b+b'}u_0\otimes v_0$ is
\begin{eqnarray}
&&\frac{b'-b}{b+b'}C^x_{u_0, v_0}(i(u_0, xy^{b+b'})+i(u_0, y)^{b+b'}i(x, y^{b+b'}u_0))\nonumber\\
&=&C^x_{u_0, v_0}(i(u_0, y)^bi(y^bu_0, xy^{b'})+i(u_0, y)^{b'}i(xy^b, y^{b'}u_0)).\label{eq:maximal_term}
\end{eqnarray}
We can choose $\lambda\in\mathbb{Z}$ satisfying $\lambda> 1$ and $\lambda+i(y, u_0)\neq 0$.
If $|i(u_0, y)|>1$, substituting $b'=\lambda b$ into (\ref{eq:maximal_term}), for sufficiently large $b>0$,
\begin{eqnarray*}
&&\frac{b'-b}{b+b'}C^x_{u_0, v_0}(i(u_0, xy^{b+b'})+i(u_0, y)^{b+b'}i(x, y^{b+b'}u_0))\\&=& \mathcal{O}(b\cdot i(u_0, y)^{(1+\lambda)b}),\\
&&C^x_{u_0, v_0}(i(u_0, y)^bi(u_0y^b, xy^{b'})+i(u_0, y)^{b'}i(xy^b, y^{b'}u_0))\\
&=&\mathcal{O}(b\cdot i(u_0, y)^{\lambda b}),
\end{eqnarray*}
where $\mathcal{O}$ is the asymptotic notation, that is, $f(b)=\mathcal{O}(g(b))$ if and only if $\lim_{b\rightarrow +\infty}|\frac{f(b)}{g(b)}|$ exists and finite for functions $f, g$ on $\mathbb{R}$.
This contradicts (\ref{eq:maximal_term}).
If $i(u_0, y)=0$, then
$\frac{b'-b}{b+b'}C^x_{u_0, v_0}i(u_0, xy^{b+b'})=0$
for all $b, b'>0$ by (\ref{eq:maximal_term}). Hence $u_0=1$.
This contradicts the assumption $\Delta(x)\in W_1(1)'\otimes W_1(1)'$.
If $i(u_0, y)=-1$, when $b$ is odd and $b'$ is even,
\begin{eqnarray*}
&&\frac{b'-b}{b+b'}C^x_{u_0, v_0}(i(u_0, xy^{b+b'})+i(u_0, y)^{b+b'}i(x, y^{b+b'}u_0))\\
&=&\frac{b'-b}{b+b'}C^x_{u_0, v_0}\cdot 2(i(u_0, x)-(b+b')),\\
&&C^x_{u_0, v_0}(i(u_0, y)^bi(y^bu_0, xy^{b'})+i(u_0, y)^{b'}i(xy^b, y^{b'}u_0))\\
&=&C^x_{u_0, v_0}\cdot 2(-i(u_0, x)+b+b').
\end{eqnarray*}
This contradicts (\ref{eq:maximal_term}).
Hence we obtain $i(u_0, y)=1$. If $C^x_{u, v}\neq 0$, then
$\mathrm{deg}_x u\leq 1$.

Substituting $Z_1=y, Z_2=y^{-1}$ into (\ref{compatible_condition}), we have
$0=i(y, u)C^{y^{-1}}_{y^{-1}u, v}+i(y, v)C^{y^{-1}}_{u, y^{-1}v}$ from $\Delta(1)=\Delta(y)=0$.
So if $i(y, u)\neq 0$ or $i(y, v)\neq 0$, then $C^{y^{-1}}_{u, v}=0$.
Substituting $Z_1=y^{-1}, Z_2=xy$ into (\ref{compatible_condition}),
$C^x_{u, v}=i(u, xy)C^{y^{-1}}_{(xy)^{-1}u, v}+i(v, xy)C^{y^{-1}}_{u, (xy)^{-1}v}+i(y^{-1}, u)C^{xy}_{yu, v}+i(y^{-1}, v)C^{xy}_{u, yv}$.
If $i(y, u)\neq 0$ and $i(y, v)\neq 0$, then $C^{y^{-1}}_{(xy)^{-1}u, v}=0$ and $C^{y^{-1}}_{u, (xy)^{-1}v}=0$. From (\ref{compatible_condition}) and $\Delta(y)=0$, we have
\begin{eqnarray*}
C^x_{u, v}&=&i(y^{-1}, u)C^{xy}_{yu, v}+i(y^{-1}, v)C^{xy}_{u, yu}\\
&=&i(y^{-1}, u)(i(u, y)C^x_{u, v}+i(v, y)C^x_{yu, y^{-1}v})\\&&+i(y^{-1}, v)(i(u, y)C^x_{y^{-1}u, yv}+i(v, y)C^x_{u, v}).
\end{eqnarray*}
Hence we have
\begin{align*}
i(u, y)i(v, y)C^x_{y^{n-1}u, y^{-n+1}v}+(i(u, y)^2+i(v, y)^2-1)C^x_{y^n u, y^{-n}v}\\+i(u, y)i(v, y)C^x_{y^{n+1}u, y^{-n-1}v}=0
\end{align*}
for all $n\in\mathbb{Z}$.
There exists $N\in\mathbb{Z}$ such that $C^x_{y^nu, y^{-n}v}=0$ for all $n>N$.
By induction on $n\leq N$, since $i(u, y)i(v, y)\neq 0$ and $i(u, y)^2+i(v, y)^2-1>0$, we obtain $C^x_{y^nu, y^{-n}v}=0$ if $i(y, u)\neq 0$ and $i(y, v)\neq 0$.
So if $C^x_{u, v}\neq 0$ and $i(u, y)=1$, we can write $(u, v)=(xy^b, y^d)$ for some $b$ and $d\in\mathbb{Z}$.

We introduce
$\Delta':=\Delta-\sum_{u, v\in\pi_1(S)^{\mathrm{Ab}}, i(u, y)=1, i(x, u)\neq 0}\frac{C^x_{u, v}}{i(x, x^{-1}u)}d(x^{-1}u\otimes v)$,
and set
$\Delta'(Z)=\sum_{u, v\in\pi_1(S)^{\mathrm{Ab}}}C'^Z_{u, v}u\otimes v$
for every $Z\in\pi_1(S)^{\mathrm{Ab}}$.
The maximum element satisfying $C'^x_{u, v}\neq 0$ in the lexicographic order can be written as $x\otimes y^d$ for some $d\neq 0$.
For $b, b'>0$, 
\begin{align}\label{eq:coefxxy}
i(xy^b, xy^{b'})\Delta'(x^2y^{b+b'})=\{\Delta'(xy^b), xy^{b'}\}+\{xy^b, \Delta'(xy^{b'})\}.
\end{align}
By (\ref{deltax2y}), the coefficient of $x\otimes xy^{b+b'+d}$ in $(b+b')\Delta'(x^2y^{b+b'})$ is
\begin{align*}C'^x_{x, y^d}i(y^d, xy^{b+b'})+C'^x_{y^{-(b+b')}, xy^{b+b'+d}}i(y^{-(b+b')}, xy^{b+b'})\\
+C'^x_{xy^{-(b+b')}, y^{b+b'+d}}i(xy^{-(b+b')}, y)^{b+b'}i(x, y^{b+b'+d}).\end{align*}
If we take $b$ and $b'>0$ sufficiently large so that $-(b+b')<\mathrm{min}_{C'^x_{u, v}\neq 0}\mathrm{deg}_yu$, we obtain $C'^x_{y^{-(b+b')}, xy^{b+b'+d}}=0$ and $C'^x_{xy^{-(b+b')}, y^{b+b'+d}}=0$.
By (\ref{deltaxyb}), the coefficient of $x\otimes xy^{b+b'+d}$ in $\{\Delta'(xy^b), xy^{b'}\}$ is
$C'^{xy^b}_{y^{-b'}, xy^{b+b'+d}}i(y^{-b'}, xy^{b'})+C'^{xy^b}_{x, y^{b+d}}i(y^{b+d}, xy^{b'})$.
Since $b+d>d$, we obtain $C'^x_{x, y^{b+d}}=0$.
If we take $b'>0$ sufficiently large so that $-b'<\mathrm{min}_{C'^x_{u, v}\neq 0}\mathrm{deg}_yu$, we obtain $C'^x_{y^{-b'}, xy^{b+b'+d}}=0$.
Similarly the coefficient of $x\otimes xy^{b+b'+d}$ in $\{xy^b, \Delta'(xy^{b'})\}$ is $0$ if $-b<\mathrm{min}_{C'^x_{u, v}\neq 0}\mathrm{deg}_yu.$
Hence the coefficient of $x\otimes xy^{b+b'+d}$ in (\ref{eq:coefxxy}) is
$\frac{b'-b}{b+b'}C'^x_{x, y^d}i(y^d, xy^{b+b'})=0$
when
$-b, -b'<\mathrm{min}_{C'^x_{u, v}\neq 0}\mathrm{deg}_yu.$
This contradicts $d\neq 0$.
Consequently $\Delta'(x)=0$ and $\Delta'(y)=0$.

\end{proof}

\begin{lemm}\label{lem:xy}
For $\Delta\in Z^1(W_1(1), W_1(1)'\otimes W_1(1)')$, there exists $\alpha\in W_1(1)'\otimes W_1(1)'$ such that
$(\Delta-d\alpha)(y)=0$.
\end{lemm}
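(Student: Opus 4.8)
The plan is to realize $\Delta(y)$ as $\{y,\alpha\}$ for some $\alpha\in W_1(1)'\otimes W_1(1)'$; since $d\alpha(y)=\{y,\alpha\}$, such an $\alpha$ proves the lemma. Write $\Delta(y)=\sum_{u,v\neq 1}C^y_{u,v}\,u\otimes v$, a finite sum. Because $\{y,x^ay^b\otimes x^cy^d\}=-a\,x^ay^{b+1}\otimes x^cy^d-c\,x^ay^b\otimes x^cy^{d+1}$, the operator $D:=\{y,-\}$ preserves the bigrading of $W_1(1)\otimes W_1(1)$ by $(\deg_x u,\deg_x v)$ and on the block indexed by $(a,c)$ it is multiplication by $-a(y\otimes 1)-c(1\otimes y)$. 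Thus $D$ is invertible on each block with exactly one of $a,c$ nonzero, it is zero on the block $(0,0)$, and on a block with $a,c$ both nonzero its image is a proper principal ideal in the corresponding two-variable Laurent polynomial ring. In particular a purely block-by-block argument cannot succeed, and the cocycle identity has to be used to deal with the blocks where $D$ has infinite-dimensional cokernel.

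First I would use the cocycle identity (\ref{compatible_condition}) to restrict which pairs $(u,v)$ can satisfy $C^y_{u,v}\neq 0$. The useful substitutions are $Z_1=x^n$, $Z_2=y$, which relates $\Delta(x^ny)$, $\Delta(x^n)$ and $\Delta(y)$ via $n\,\Delta(x^ny)=\{\Delta(x^n),y\}+\{x^n,\Delta(y)\}$, and — more importantly — $Z_1=x^py^q$, $Z_2=x^{-p}y^{1-q}$ with $p\neq 0$, for which $\{Z_1,Z_2\}=p\,y$, so that $\Delta(y)$ gets re-expressed through the values of $\Delta$ on monomials of larger word length. Reading the resulting relations among the $C^y_{u,v}$ in the lexicographic order used in the proof of Lemma~\ref{lem:y0} and isolating a maximal surviving term, an asymptotic growth estimate of the same type as in that proof forces the leading coefficient to vanish; iterating this pins down the $x$-bidegrees that can occur in $\Delta(y)$. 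Having controlled the shape of $\Delta(y)$ in this way, I would subtract an explicit coboundary $d\alpha$ with $\alpha\in W_1(1)'\otimes W_1(1)'$ that cancels the dominant terms, and then repeat (or run one further asymptotic argument of the same flavour) to reach $(\Delta-d\alpha)(y)=0$. Together with Lemma~\ref{lem:y0} and Proposition~\ref{thm0}, this yields $H^1(W_1(1),W_1(1)'\otimes W_1(1)')\cong 0$.

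I expect the main obstacle to be the passage from the infinite family of cocycle relations — in particular those coming from substitutions with $\{Z_1,Z_2\}$ a scalar multiple of $y$ — to the vanishing of the individual coefficients $C^y_{u,v}$ in the blocks where $D$ has infinite-dimensional cokernel, namely the $(0,0)$ block and the blocks in which both $\deg_x u$ and $\deg_x v$ are nonzero. Since $\Delta$ is a finitely supported cochain, it is exactly the growth/asymptotic device of Lemma~\ref{lem:y0} that makes this reduction possible; carefully tracking which coefficients are forced to be zero, and choosing $\alpha$ with the correct support so that the reduction terminates, is where the technical work lies.
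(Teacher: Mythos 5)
Your plan is essentially the paper's proof: the paper likewise first subtracts the coboundaries $d(y^{-1}u\otimes v)$ and $d(u\otimes y^{-1}v)$ to dispose of the blocks on which $\{y,-\}$ is invertible, and then uses the cocycle identity together with a lexicographic maximal-term and asymptotic-growth argument (the $b'=\lambda b$ device of Lemma~\ref{lem:y0}) to force the remaining coefficients to vanish --- first those with $i(y,u)\neq 0$ and $i(y,v)\neq 0$, after pushing each orbit's coefficients onto a single far-off representative $y^Ru_0\otimes y^{-R}v_0$, and then those of the form $y^k\otimes y^l$ lying in your $(0,0)$ block, where no coboundary can help. The only difference is that you leave the decisive computations (which coefficients of which $\Delta(x^ay^b)$ and $\Delta(x^2y^{b+b'})$ to compare, and the auxiliary normalization $C^x_{xy^s,y^t}=0$ needed to make them close up) as acknowledged ``technical work,'' and that is precisely the bulk of the paper's argument.
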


\begin{proof}
Set
$\Delta(Z)=\sum_{u, v\in\pi_1(S)^{\mathrm{Ab}}}C^Z_{u, v}u\otimes v$
for every $Z\in\pi_1(S)^{\mathrm{Ab}}$.
We have
\begin{align*}
\Delta(y)&=\sum_{i(y, u)\neq 0, i(y, v)=0}\frac{C^y_{u, v}}{i(y, u)}d(y^{-1}u\otimes v)(y)+\sum_{i(y, u)=0, i(y, v)\neq 0}\frac{C^y_{u, v}}{i(y, v)}d(u\otimes y^{-1}v)(y)\\
&+\sum_{i(y, u)\neq 0, i(y, v)\neq 0}C^y_{u, v}u\otimes v+\sum_{i(y, u)=0, i(y, v)=0}C^y_{u, v}u\otimes v.
\end{align*}
Replacing $\Delta$ with 
\[
\Delta-\sum_{i(y, u)\neq 0, i(y, v)=0}\frac{C^y_{u, v}}{i(y, u)}d(y^{-1}u\otimes v)-\sum_{i(y, u)=0, i(y, v)\neq 0}\frac{C^y_{u, v}}{i(y, v)}d(u\otimes y^{-1}v),
\]
we may assume $C^y_{u, v}=0$ if $i(y, u)\neq 0$ and $i(y, v)=0$, or $i(y, u)=0$ and $i(y, v)\neq 0$.

If $i(y, u_0)\neq 0$ and $i(y, v_0)\neq 0$, consider the sequence
$\{C^y_{y^{-n}u_0, y^nv_0}\}_{n\in\mathbb{Z}}$.
We only have to consider one representative $(u_0, v_0)$ of each orbit for $\mathbb{Z}$.
Let $N\in\mathbb{Z}$ be the minimum integer satisfies $C^y_{y^{-n}u_0, y^nv_0}=0$ for all $n>N$.
Replace $\Delta$ with $\overline\Delta:=\Delta-\frac{C^y_{y^{-N}u_0, y^Nv_0}}{i(y, v_0)}d(y^{-N}u_0\otimes y^{N-1}v_0)$, and set $\overline\Delta(Z)=\sum_{u, v\in\pi_1(S)^{\mathrm{Ab}}}\overline C^Z_{u, v}u\otimes v$ for all $Z\in\pi_1(S)^{\mathrm{Ab}}$.
Then $\overline C_{y^{-N}u_0, y^Nv_0}^y=0$ and $\overline C^y_{y^{-N+1}u_0, y^{N-1}v_0}=C^y_{y^{-N+1}u_0, y^{N-1}v_0}-\frac{i(y, u_0)}{i(y, v_0)}C^y_{y^{-N}u_0, y^Nv_0}$.
Repeat this operation $R+N$ times.
For a sufficiently large $R>0$, by replacing $\Delta$ with
\begin{align*}
\Delta-\sum_{n=-N}^{R-1}\frac{1}{i(y, v_0)}(C^y_{y^nu_0, y^{-n}v_0}-\frac{i(y, u_0)}{i(y, v_0)}C^y_{y^{n-1}u_0, y^{-n-1}v_0}+\cdots \\+\left(-\frac{i(y, u_0)}{i(y, v_0)}\right)^{n+N}C^y_{y^{-N}u_0, y^Nv_0})d(y^nu_0\otimes y^{-n-1}v_0),
\end{align*}
we can assume $C^y_{y^{n}u_0, y^{-n} v_0}=0$ for $n\neq R$.
We will show $C^y_{y^Ru_0, y^{-R}v_0}=0$.
Suppose $C^y_{y^Ru_0, y^{-R}v_0}\neq 0$.
Let us define a lexicographic order on $\{x^ay^b\otimes x^cy^d \ |\ a, b, c, d\in\mathbb{Z}\}$ as follows.
$x^ay^b\otimes x^cy^d <x^{a'}y^{b'}\otimes x^{c'}y^{d'}$ if and only if
\begin{itemize}
\item$b<b'\ \text{or}$
\item $b=b'\ \text{and}\ a<a',\ \text{or}$
\item $b=b'\ \text{and}\ a=a'\ \text{and}\ d<d',\ \text{or}$
\item $b=b'\ \text{and}\ a=a'\ \text{and}\ d=d'\ \text{and}\ c<c'.$\end{itemize}

Take $R$ sufficiently large so that $R>\max_{C^y_{u, v}\neq 0}\deg_y u-\deg_y u_0, \max_{C^x_{u, v}\neq 0}\deg_y u +1-\deg_y u_0$.
Then the maximum term in $\Delta(y)$ is $y^Ru_0\otimes y^{-R}v_0$ and that in
$\Delta(xy)=\{\Delta(x), y\}+\{x, \Delta(y)\}$
is $xy^Ru_0\otimes y^{-R}v_0$.
The coefficient of $xy^Ru_0\otimes y^{-R}v_0$ in $\Delta(xy)$ is
$i(x, y^Ru_0)C^y_{y^Ru_0, y^{-R}v_0}$.
By induction on $a>0$, the maximum term in
$\Delta(x^ay)=\{\Delta(x), x^{a-1}y\}+\{x, \Delta(x^{a-1}y)\}$
is $x^ay^Ru_0\otimes y^{-R}v_0$, and its coefficient is
$i(x, y^Ru_0)^a C^y_{y^Ru_0, y^{-R}v_0}$.
The maximum term in
$a\Delta(x^ay^2)=\{\Delta(x^ay), y\}+\{x^ay, \Delta(y)\}$
is $x^ay^{R+1}u_0\otimes y^{-R}v_0$ and its coefficient is
$(i(x^ay^Ru_0, y)i(x, y^Ru_0)^a+i(x^ay, y^Ru_0))C^y_{y^Ru_0, y^{-R}v_0}$.
For $a, a'>0$, the coefficient of $x^{a+a'}y^{R+1}u_0\otimes y^{-R}v_0$ in
$i(x^ay, x^{a'}y)\Delta(x^{a+a'}y^2)=\{\Delta(x^ay), x^{a'}y\}+\{x^ay, \Delta(x^{a'}y)\}$
is
\begin{eqnarray}
&&\frac{a-a'}{a+a'}(i(x^{a+a'}y^Ru_0, y)i(x, y^Ru_0)^{a+a'}+i(x^{a+a'}y, y^Ru_0))C^y_{y^Ru_0, y^{-R}v_0}\label{eq:maximal_term_y}\\
&=&i(x^ay^Ru_0 x^{a'}y)i(x, y^Ru_0)^aC^y_{y^Ru_0, y^{-R}v_0}+i(x^ay, x^{a'}y^Ru_0)i(x, y^Ru_0)^{a'}C^y_{y^Ru_0, y^{-R}v_0}.\nonumber
\end{eqnarray}
We can choose $\lambda\in\mathbb{Z}$ satisfying $\lambda>1$ and $R+i(x, u_0)-\lambda\neq 0$.
Since we can assume $|i(x, y^Ru_0)|>1$, substituting $a'=\lambda a$, for sufficiently large $a>0$,
\begin{eqnarray*}
&&\frac{a-a'}{a+a'}(i(x^{a+a'}y^Ru_0, y)i(x, y^Ru_0)^a+i(x^{a+a'}y, y^Ru_0))C^y_{y^Ru_0, y^{-R}v_0}\\&=&\mathcal{O}(a\cdot i(x, y^Ru_0)^{(1+\lambda)a}),\\
&&i(x^ay^Ru_0, x^{a'}y)i(x, y^Ru_0)^aC^y_{y^Ru_0, y^{-R}v}+i(x^ay, x^{a'}y^Ru_0)i(x, y^Ru_0)^{a'}C^y_{y^Ru_0, y^{-R}v_0}\\&=&\mathcal{O}(a\cdot i(x, y^Ru_0)^{\lambda a}).
\end{eqnarray*}
This contradicts (\ref{eq:maximal_term_y}).
Hence $C^y_{y^Ru_0, y^{-R}v_0}=0$.
So we can assume
$\Delta(y)=\sum_{k, l\in\mathbb{Z}, k, l\neq 0}C^y_{y^k, y^l}y^k\otimes y^l$.

Let $y^k\otimes y^l$ be the maximum term satisfying $C^y_{y^k, y^l}\neq 0$.
If we replace $\Delta$ with 
$\overline\Delta:=\Delta-\sum_{s, t\in\mathbb{Z}, s\neq 0}\frac{C^x_{xy^s, y^t}}{i(x, y^s)}d(y^s\otimes y^t)$, then $\overline C^x_{xy^s, y^t}=0$ for all $s\neq 0$ and $t\in\mathbb{Z}$, where $\overline\Delta(x)=\sum_{u, v\in\pi_1(S)^{\mathrm{Ab}}}\overline C^x_{u, v}u\otimes v$.
Since $\overline\Delta(y)=\Delta(y)$, we can assume $C^x_{xy^s, y^t}=0$ for all $s\neq 0$ and $t\in\mathbb{Z}$.
If $k\neq 1$, then the coefficient of $xy^k\otimes y^l$ in
$\Delta(xy)=\{\Delta(x), y\}+\{x, \Delta(y)\}$
is $kC^y_{y^k, y^l}$.
By induction on $b>0$, the coefficient of $xy^{k+b-1}\otimes y^l$ in
$\Delta(xy^b)=\{\Delta(xy^{b-1}), y\}+\{xy^{b-1}, \Delta(y)\}$
is $bkC^y_{y^k, y^l}$.
Substituting $Z_1=y^{-(k-1)}, Z_2=xy^{k+b-1}$ into (\ref{compatible_condition}), the coefficient of $xy^{k+b-1}\otimes y^l$ in
$(k-1)\Delta(xy^b)=\{\Delta(y^{-(k-1)}), xy^{k+b-1}\}+\{y^{-(k-1)}, \Delta(xy^{k+b-1})\}$
is 
\begin{eqnarray*}
&&(k-1)\cdot bk C^y_{y^k, y^l}\\
&=&C^{y^{-(k-1)}}_{1, y^l}i(1, xy^{k+b-1})+C^{y^{-(k-1)}}_{xy^{k+b-1}, x^{-1}y^{l-k-b+1}}i(x^{-1}y^{l-k-b+1}, xy^{k+b-1})\\&&\ +i(y^{-(k-1)}, xy^{2k+b-2})\cdot (k+b-1)k C^y_{y^k, y^l}+i(y^{-(k-1)}, y^{k+l-1})C^{xy^{k+b-1}}_{xy^{k+b-1}, y^{k+l-1}}\\
&=& (k-1)\cdot (k+b-1)kC^y_{y^k, y^l}
\end{eqnarray*}
for $b\gg 0$.
Since $k\neq 0$ and $C^y_{y^k, y^l}\neq 0$, this contradicts the assumption $k\neq 1$.
Therefore $k=1$.
Hence the maximum term in $\Delta(xy^b)$ is $xy^b\otimes y^l$ for $b>0$.
Substituting $Z_1=x, Z_2=xy^b$ into (\ref{compatible_condition}), the coefficient of $xy^b\otimes xy^l$ in
$b\Delta(x^2y^b)=\{\Delta(x), xy^b\}+\{x, \Delta(xy^b)\}$
is
\[
i(1, xy^b)C^x_{1, xy^l}+i(y^{l-b}, xy^b)C^x_{xy^b, y^{l-b}}+i(x, y^b)C^{xy^b}_{y^b, xy^l}+i(x, y^l)C^{xy^b}_{xy^b, y^l}.
\]
Since $C^x_{xy^s, y^t}=0$ for all $s, t\neq 0$, we have $C^x_{xy^b, y^{l-b}}=0$.
We compute the value $C^{xy^b}_{y^b, xy^l}$.
Take $b>\mathrm{max}_{C^x_{u, v}\neq 0}\{\mathrm{deg}_yu\}, \mathrm{max}_{C^y_{u, v}\neq 0}\{\mathrm{deg}_yu\}$.
From the equation $\Delta(xy^b)=\{\Delta(xy^{b-1}), y\}+\{xy^{b-1}, \Delta(y)\}$, we have
\begin{align*}
&C^{xy^b}_{y^b, xy^l}\\
&= i(y^b, y)C^{xy^{b-1}}_{y^{b-1}, xy^l}+i(xy^l, y)C^{xy^{b-1}}_{y^b, xy^{l-1}}+i(xy^{b-1}, y^b)C^y_{x^{-1}y, xy^l}+i(xy^{b-1}, xy^l)C^y_{y^b, y^{l-b+1}}\\
&=i(xy^l, y)C^{xy^{b-1}}_{y^b, xy^{l-1}}+i(xy^{b-1}, y^b)C^y_{x^{-1}y, xy^l}.
\end{align*}
Since $\Delta(y)=\sum_{k, l\in\mathbb{Z}}C^y_{y^k, y^l}y^k\otimes y^l$, we obtain
\begin{eqnarray}
C^{xy^b}_{y^b, xy^l}
=C^{xy^{b-1}}_{y^b, xy^{l-1}}
=\cdots =C^x_{y^b, xy^{l-b}}=0.\label{eq:yxy0}
\end{eqnarray}
Hence the coefficient of the term $xy^b\otimes xy^l$ in $\Delta(x^2y^b)$ is
$\frac{l}{b}C^{xy^b}_{xy^b, y^l}=\frac{l}{b}(C^x_{x, y^l}+bC^y_{y, y^l})$.
Substituting $Z_1=xy^b, Z_2=xy^{b'}$ into (\ref{compatible_condition}), 
$(b'-b)\Delta(x^2y^{b+b'})=\{\Delta(xy^b), xy^{b'}\}+\{xy^b, \Delta(xy^{b'})\}$.
For $b, b'>0$, the coefficient of $xy^{b+b'}\otimes xy^l$ is
\begin{eqnarray*}
\frac{b'-b}{b+b'}l(C^x_{x, y^l}+bC^y_{y, y^l})&=&i(y^b, xy^{b'})C^{xy^b}_{y^b, xy^l}+i(y^{l-b'}, xy^{b'})C^{xy^b}_{xy^{b+b'}, y^{l-b'}}\\&&+i(xy^b, y^{b'})C^{xy^{b'}}_{y^{b', xy^l}}+i(xy^b, y^{l-b})C^{xy^{b'}}_{xy^{b+b'}, y^{l-b}}.
\end{eqnarray*}
Since $b, b' < b+b'$, we have $C^{xy^b}_{xy^{b+b'}, y^{l-b'}}=0$ and $C^{xy^{b'}}_{xy^{b+b'}, y^{l-b}}=0$.
If we take sufficiently large $b$ and $b'$, then from a similar argument to (\ref{eq:yxy0}), we obtain $C^{xy^b}_{y^b, xy^l}=0$ and $C^{xy^{b'}}_{y^{b'}, xy^l}=0$.
This contradicts $l\neq 0$ and $C^y_{y, y^l}\neq 0$.
Therefore $\Delta(y)=0$.
This proves Lemma \ref{lem:xy}.
\end{proof}

By Lemma \ref{lem:y0} and Lemma \ref{lem:xy}, we obtain Theorem \ref{thm:iso_g} for $g=1$. Now we prove Theorem \ref{thm:iso_g} for any $g$.

\begin{lemm}\label{lem:gdeltay}
Let $i\in\{1, \cdots, g\}$ and $\Delta\in Z^1(W_1(g), W_1(g)'\otimes W_1(g)')$ satisfy $\Delta(x_{j})=0$ and $\Delta(y_{j})=0$ for all $j>i$.
Then there exists some element $\alpha\in W_1(g)'\otimes W_1(g)'$ such that the difference $\Delta'=\Delta-d\alpha$ satisfies $\Delta'(y_i)=0$, $\Delta'(x_j)=0$ and $\Delta'(y_j)=0$ for all $j>i$.
\end{lemm}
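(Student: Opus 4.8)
The plan is to carry over, step by step, the argument of Lemma \ref{lem:xy}, with the pair $(x,y)$ replaced by $(x_i,y_i)$ and the remaining generators $x_1,y_1,\dots,x_{i-1},y_{i-1}$ treated as inert \emph{spectators}. The underlying observations are that, for a monomial $w\in\pi_1(S)^{\mathrm{Ab}}$, one has $i(x_i,w)=\deg_{y_i}w$ and $i(y_i,w)=-\deg_{x_i}w$, so bracketing with $x_i$ or $y_i$ merely shifts the $x_i$- and $y_i$-exponents of $w$ and leaves the spectator exponents untouched; and, for $j>i$, $i(x_j,w)=i(y_j,w)=0$ as soon as $w$ involves none of $x_j,y_j$, so bracketing with $x_j$ or $y_j$ kills every monomial in $x_1,\dots,y_i$.

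The first step is a support reduction. Write $\Delta(Z)=\sum_{u,v}C^Z_{u,v}\,u\otimes v$. For all $a,b\in\mathbb{Z}$, every $j>i$, and $\zeta\in\{x_j,y_j\}$ we have $i(x_i^ay_i^b,\zeta)=0$ and $\Delta(\zeta)=0$, so taking $(Z_1,Z_2)=(x_i^ay_i^b,\zeta)$ in (\ref{compatible_condition}) yields $0=i(u,\zeta)C^{x_i^ay_i^b}_{\zeta^{-1}u,v}+i(v,\zeta)C^{x_i^ay_i^b}_{u,\zeta^{-1}v}$. Running the sliding argument already used in Proposition \ref{thm0} and Lemma \ref{lem:y0} (distinguishing whether none, one, or both of the two intersection numbers vanish, and using that $\Delta(x_i^ay_i^b)$ has finite support) forces $C^{x_i^ay_i^b}_{u,v}=0$ unless $u$ and $v$ are monomials in $x_1,y_1,\dots,x_i,y_i$ alone, with $u,v\neq 1$ since $\Delta$ is valued in $W_1(g)'\otimes W_1(g)'$. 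Hence every coboundary $d(u\otimes v)$ produced below will have $u,v$ among such monomials, and for these and any $j>i$ we get $d(u\otimes v)(x_j)=d(u\otimes v)(y_j)=0$, so that subtracting these coboundaries changes neither $\Delta(x_j)$ nor $\Delta(y_j)$ for $j>i$, and each lies in $W_1(g)'\otimes W_1(g)'$.

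Before transcribing Lemma \ref{lem:xy}, one preliminary higher-genus cleanup is needed: taking $(Z_1,Z_2)=(x_i^{-1}y_i,x_i)$ in (\ref{compatible_condition}) (so that $Z_1Z_2=y_i$ and $i(Z_1,Z_2)=-1$) and specializing $u,v$ to spectator monomials makes all four terms on the right-hand side vanish, whence $C^{y_i}_{u,v}=0$ whenever $u$ and $v$ involve no $x_i,y_i$. After this I would reproduce the proof of Lemma \ref{lem:xy} line by line, with $x,y$ replaced by $x_i,y_i$ and the two lexicographic orders extended so that the $x_i$- and $y_i$-exponents outrank the spectator exponents. Every cocycle relation, identification of a leading term, and asymptotic estimate in that proof involves only $x_i$- and $y_i$-degrees (because $i(x_i,-)$ and $i(y_i,-)$ depend only on those), while the spectator part of each monomial is carried along passively; so each step survives verbatim, and the output is $\alpha\in W_1(g)'\otimes W_1(g)'$ with $(\Delta-d\alpha)(y_i)=0$ and, by the support reduction, $(\Delta-d\alpha)(x_j)=(\Delta-d\alpha)(y_j)=0$ for $j>i$.

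\textbf{Main obstacle.} The spectators create configurations with no genus-$1$ analogue: a leading monomial of $\Delta(x_i)$ or $\Delta(y_i)$ in one tensor slot may be supported entirely on spectator variables, a situation where the genus-$1$ argument would conclude ``this monomial is $1$, contradiction'' but which is no longer absurd here. I expect this to be resolved by running the machinery symmetrically in the two slots — the module $W_1(g)'\otimes W_1(g)'$ is stable under the interchange $u\otimes v\mapsto v\otimes u$ — and by invoking, on the ``purely block-$i$'' piece, the vanishing $H^1(W_1(1),W_1(1)'\otimes W_1(1)')=0$ already proved in the case $g=1$ (Lemmas \ref{lem:y0} and \ref{lem:xy}); combined, these force the residual spectator-supported terms of $\Delta(y_i)$ to vanish. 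Managing this last bookkeeping cleanly is the crux.
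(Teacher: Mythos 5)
Your overall strategy — reduce the support of $\Delta(y_i)$ to monomials in $x_1,y_1,\dots,x_i,y_i$ via the sliding argument with $\zeta\in\{x_j,y_j\}$, $j>i$, and then transplant the genus-one argument of Lemma \ref{lem:xy} with the lower-index generators as spectators — is exactly the paper's, and your preliminary cleanup killing $C^{y_i}_{u,v}$ for $u,v$ both spectator monomials is correct (the paper gets the same conclusion from the substitution $(Z_1,Z_2)=(x_iy_i,x_i^{-1})$). However, the step you yourself flag as ``the crux'' is a genuine gap, and neither of your two proposed resolutions closes it. After the reductions, the residual terms of $\Delta(y_i)$ that have no genus-one analogue are of the form $u_1\otimes y_i^{l}v_1$ (or its mirror) with $u_1\neq 1$ a pure spectator monomial and $l\neq 0$. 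These are \emph{mixed} terms: they do not lie in the block-$i$ subalgebra, so the vanishing $H^1(W_1(1),W_1(1)'\otimes W_1(1)')=0$ says nothing about them; and interchanging the two tensor slots only trades this case for its mirror, since the obstruction is that one slot is inert under all brackets with $x_i$ and $y_i$ (every intersection number the genus-one argument manipulates vanishes on $u_1$), not that the wrong slot was privileged.

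The missing idea is to bring in a generator from a \emph{lower} block. Since $u_1\neq 1$, there exist $j<i$ and $z_j\in\{x_j,y_j\}$ with $i(z_j,u_1)\neq 0$; because $i(z_j,y_i)=0$, the cocycle relation (\ref{compatible_condition}) for $(Z_1,Z_2)=(z_j,y_i)$ reads
\[
0=i(u, y_i)C^{z_j}_{y_i^{-1}u, v}+i(v, y_i)C^{z_j}_{u, y_i^{-1}v}+i(z_j, u)C^{y_i}_{z_j^{-1}u, v}+i(z_j, v)C^{y_i}_{u, z_j^{-1}v},
\]
and evaluating at $u=z_ju_1$, $v=y_i^{l}v_1$ kills the first two terms (both arguments have vanishing $x_i$-degree), leaving a relation that, combined with the maximality of $u_1\otimes y_i^{l}v_1$ in the lexicographic order, forces $C^{y_i}_{u_1,y_i^{l}v_1}=0$. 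Without this (or an equivalent device) your transcription of Lemma \ref{lem:xy} does not terminate, so the proof as proposed is incomplete.
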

\begin{proof}
For $j>i$ and $z_j\in\{x_j, y_j\}$, substituting $Z_1=y_i, Z_2=z_j$ into (\ref{compatible_condition}), we have
$0=i(u, z_j)C^{y_i}_{z_j^{-1}u, v}+i(v, z_j)C^{y_i}_{u, z_j^{-1}v}$ from $\Delta(z_j)=0$.
Since $C^{y_i}_{z_j^{n-1}u, z_j^n v}=0$ for sufficiently large $n$, we have $C^{y_i}_{u, v}=0$ if $i(u, z_j)\neq 0$ or $i(v, z_j)\neq 0$.
So we may write
$\Delta(y_i)=\sum_{u, v\in \langle x_1, y_1, \cdots, x_i, y_i\rangle}C^{y_i}_{u, v}u\otimes v$,
where $\langle x_1, y_1, \cdots, x_i, y_i\rangle$ is the subgroup of $\pi_1(S)^{\mathrm{Ab}}$ which is generated by $x_1, y_1, \cdots, x_i, y_i$.
By a similar argument to the proof of Lemma \ref{lem:xy}, we can assume
\[
\Delta(y_i)=\sum_{u, v\neq 1\in\langle x_1, y_1, \cdots, x_i, y_i\rangle, i(y_i, u)=0\ \text{and}\ i(y_i, v)=0}C^{y_i}_{u, v}u\otimes v.
\]
In other words, let us define a set $T:=\{(y_i^ku_1, v_1)\ |\ k\in\mathbb{Z}, u_1, v_1\in\langle x_1, y_1, \cdots, x_{i-1}, y_{i-1}, x_i\rangle\}$ and take $R\in\mathbb{Z}$ sufficiently large so that \[R>\max_{C^{y_i}_{u, v}\neq 0}\{\deg_y u, 0\}-\min_{C^{y_i}_{u, v}\neq 0}\deg_y u, \max_{C^{x_i}_{u, v}\neq 0}\deg_y u +1-\min_{C^{y_i}_{u, v}\neq 0}\deg_y u.\]
Replacing $\Delta$ with 
\begin{align*}
\Delta-\sum_{i(y_i, u)\neq 0, i(y_i, v)=0}\frac{C^{y_i}_{u, v}}{i(y_i, u)}d(y_i^{-1}u\otimes v)-\sum_{i(y_i, u)=0, i(y_i, v)\neq 0}\frac{C^{y_i}_{u, v}}{i(y_i, v)}d(u\otimes y_i^{-1}v)\\
-\sum_{i(y_i, u)\neq 0, i(y_i, v)\neq 0, (u, v)\in T}\sum_{n=-N}^{R-1}\frac{1}{i(y_i, v)}(C^y_{y_i^nu, y_i^{-n}v}-\frac{i(y_i, u)}{i(y_i, v)}C^{y_i}_{y_i^{n-1}u, y_i^{-n-1}v}+\cdots \\
+ \left(-\frac{i(y_i, u)}{i(y_i, v)}\right)^{n+N}C^{y_i}_{y_i^{-N}u, y_i^Nv})d(y_i^nu\otimes y_i^{-n-1}v),
\end{align*}
we have $C^{y_i}_{u, v}=0$ when $i(y_i, u)\neq 0$ or $i(y_i, v)\neq 0$.
We introduce a lexicographic order on $\{x_1^{a_1}y_1^{b_1}\cdots x_g^{a_g}y_g^{b_g}\otimes x_1^{c_1}y_1^{d_1}\cdots x_g^{c_g}y_g^{d_g}|\ a_i, b_i, c_i , d_i\in\mathbb{Z}\}$ as follows.
$x_1^{a_g}y_1^{b_g}\cdots x_g^{a_1}y_g^{b_1}\otimes x_1^{a_{2g}}y_1^{b_{2g}}\cdots x_g^{a_{g+1}}y_g^{b_{g+1}}<x_1^{a'_g}y_1^{b'_g}\cdots x_g^{a'_1}y_g^{b'_1}\otimes x_1^{a'_{2g}}y_1^{b'_{2g}}\cdots x_g^{a'_{g+1}}y_g^{b'_{g+1}}$ if and only if there exists $j\in\{1, \cdots, 2g\}$ such that
$b_k=b'_k$ and $a_k=a'_k$ for all $k<j$ and 
\begin{itemize}\item $b_j<b'_j\ \text{or}$ \item $b_j=b'_j\ \text{and}\ a_j<a'_j$.\end{itemize}

Substituting $Z_1=x_iy_i, Z_2=x_i^{-1}$ into (\ref{compatible_condition}), we obtain
\begin{eqnarray*}
C^{y_i}_{u, v}&=&i(u, x_i^{-1})C^{x_iy_i}_{x_iu, v}+i(v, x_i^{-1})C^{x_iy_i}_{u, x_iv}\\&&+i(x_iy_i, u)C^{x_i^{-1}}_{(x_iy_i)^{-1}u, v}+i(x_iy_i, v)C^{x_i^{-1}}_{u, (x_iy_i)^{-1}v}.
\end{eqnarray*}
Hence we have $i(x_i, u)\neq 0$ or $i(x_i, v)\neq 0$ if $C^{y_i}_{u, v}\neq 0$.

Let $u_0\otimes v_0$ be the maximum term satisfying $C^{y_i}_{u_0, v_0}\neq 0$.
Then we have $i(x_i, u_0)\neq 0$ or $i(x_i, v_0)\neq 0$.
If $i(x_i, u_0)\neq 0$ and $i(x_i, v_0)\neq 0$, then a similar argument to the proof of Lemma \ref{lem:xy} holds.
That is, we can write $u_0\otimes v_0=y_i^ku_1\otimes y_i^{l}v_1$ for some $u_1, v_1\in\langle x_1, y_1, \cdots, x_{i-1}, y_{i-1}\rangle$ and some $k, l\neq 0$.
Considering the coefficient of $x_iy_i^{k+b-1}u_1\otimes x_iy_i^lv_1$ in $\Delta(x_i^2y_i^b)$ for a sufficiently large $b>0$, we obtain $l=0$.
This contradicts the assumption $l\neq 0$.

If $i(x_i, u_0)=0$ and $i(x_i, v_0)\neq 0$, we can write $u_0\otimes v_0=u_1\otimes y_i^lv_1$ where $u_1, v_1\in\langle x_1, y_1, \cdots, x_{i-1}, y_{i-1}\rangle\subset\pi_1(S)^{\mathrm{Ab}}$ and $l\neq 0$.
From $u_1\neq 1$, there exists $j<i$ and $z_j\in\{x_j, y_j\}$ such that $i(z_j, u_1)\neq 0$.
Substituting $Z_1=z_j$ and $Z_2=y_i$ into (\ref{compatible_condition}), we have
\[0=i(u, y_i)C^{z_j}_{y_i^{-1}u, v}+i(v, y_i)C^{z_j}_{u, y_i^{-1}v}+i(z_j, u)C^{y_i}_{z_j^{-1}u, v}+i(z_j, v)C^{y_i}_{u, z_j^{-1}v}.\]
Substituting $z_j^{-1}u=u_1$ and $v=y_i^lv_1$ into the above equation, we obtain
\[0=i(z_j, u_1)C^{y_i}_{u_1, y_i^lv_1}+i(z_j, v_1)C^{y_i}_{z_ju_1, z_j^{-1}y_i^lv_1}.\]
Since $i(z_j, u_1)\neq 0$, we have $C^{y_i}_{u_1, y_i^lv_1}=0$.
This contradicts the assumption $C^{y_i}_{u_1, y_i^lv_1}\neq 0$.
Similar argument holds for the case $i(x_i, u_0)\neq 0$ and $i(x_i, v_0)=0$.
Hence $\Delta(y_i)=0$.
This proves Lemma \ref{lem:gdeltay}.

\end{proof}

\begin{lemm}\label{lem:12}
Let $i\in\{1, \cdots, g\}$ and $\Delta\in Z^1(W_1(g), W_1(g)'\otimes W_1(g)')$ satisfy $\Delta(y_i)=0$, $\Delta(x_{j})=0$ and $\Delta(y_{j})=0$ for all $j>i$.
Then there exists $\alpha\in W_1(g)'\otimes W_1(g)'$ such that the difference $\Delta'=\Delta-d\alpha$ satisfies
$\Delta'(x_{j})=0$ and $\Delta'(y_{j})=0$ for all $j\geq i$.
\end{lemm}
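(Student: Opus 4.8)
The plan is to carry out the proof of Lemma~\ref{lem:y0} with $x_i,y_i$ in the roles of $x,y$, after first cutting the support of $\Delta(x_i)$ down to the part of $W_1(g)$ built from $x_1,y_1,\dots,x_i,y_i$. Write $\Delta(Z)=\sum_{u,v}C^Z_{u,v}\,u\otimes v$. Since $\Delta(z_j)=0$ for every $j>i$ and $z_j\in\{x_j^{\pm1},y_j^{\pm1}\}$, substituting $Z_1=x_i,\ Z_2=z_j$ into (\ref{compatible_condition}) gives $0=i(u,z_j)C^{x_i}_{z_j^{-1}u,v}+i(v,z_j)C^{x_i}_{u,z_j^{-1}v}$, and the shift argument of Lemma~\ref{lem:gdeltay} then forces $C^{x_i}_{u,v}=0$ unless $u,v\in H:=\langle x_1,y_1,\dots,x_i,y_i\rangle$. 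From now on $\Delta(x_i)$ is supported on $H\otimes H$; the generators $x_j,y_j$ with $j<i$ have zero intersection with $x_i$ and $y_i$, so they ride along throughout as inert coefficients under the Poisson bracket.

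Next, order the monomials of $W_1(g)\otimes W_1(g)$ by the $y_i$-exponent, then the $x_i$-exponent, of the left factor, then of the right, as in Lemma~\ref{lem:y0}. Formulas (\ref{deltaxyb}) and (\ref{deltax2y}) hold verbatim with $x,y$ replaced by $x_i,y_i$; comparing leading terms on the two sides of $i(x_iy_i^b,x_iy_i^{b'})\Delta(x_i^2y_i^{b+b'})=\{\Delta(x_iy_i^b),x_iy_i^{b'}\}+\{x_iy_i^b,\Delta(x_iy_i^{b'})\}$, the same trichotomy ($|i(u_0,y_i)|>1$, $i(u_0,y_i)=0$, $i(u_0,y_i)=-1$ are all impossible, the middle case because $\Delta(x_i)\in W_1(g)'\otimes W_1(g)'$) shows the leading term $u_0\otimes v_0$ of $\Delta(x_i)$ has $i(u_0,y_i)=1$, hence $\deg_{x_i}u\le1$ on the whole support. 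Now feed in $\Delta(y_i)=0$: the substitutions $Z_1=y_i,\ Z_2=y_i^{-1}$ and $Z_1=y_i^{-1},\ Z_2=x_iy_i$, together with the ensuing recursion on $C^{x_i}_{y_i^{n}u,\,y_i^{-n}v}$, show, exactly as in Lemma~\ref{lem:y0}, that each surviving $(u,v)$ with $\deg_{x_i}u=1$ has the form $(x_iy_i^{b}w,\ y_i^{d}w')$ with $w,w'\in\langle x_1,y_1,\dots,x_{i-1},y_{i-1}\rangle$.

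Put
\[
\Delta':=\Delta-\sum\frac{C^{x_i}_{u,v}}{i(x_i,\,x_i^{-1}u)}\,d(x_i^{-1}u\otimes v),
\]
the sum over the support terms of $\Delta(x_i)$ with $\deg_{x_i}u=1$ and $u\neq x_i$. Each $x_i^{-1}u=y_i^{b}w$ and each $v=y_i^{d}w'$ lies in $W_1(g)'$ (the former because $u\neq x_i$, the latter because $\Delta(x_i)\in W_1(g)'\otimes W_1(g)'$), so the cochain subtracted is $d\alpha$ for an $\alpha\in W_1(g)'\otimes W_1(g)'$; moreover $\alpha$ is supported on $H\otimes H$ with no $x_i$ in either tensor slot, so $\{y_i,\alpha\}=0$ and $\{x_j,\alpha\}=\{y_j,\alpha\}=0$ for all $j>i$, whence $\Delta'(y_i)=0$ and $\Delta'(x_j)=\Delta'(y_j)=0$ for $j>i$ persist. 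After the subtraction $\Delta'(x_i)$ is supported on terms $x_i\otimes y_i^{d}w'$. To kill these, repeat the last paragraph of Lemma~\ref{lem:y0}: if $x_i\otimes y_i^{d}w'$ is a nonzero leading term of $\Delta'(x_i)$, then comparing the coefficient of $x_i\otimes x_iy_i^{b+b'+d}w'$ on the two sides of $i(x_iy_i^b,x_iy_i^{b'})\Delta'(x_i^2y_i^{b+b'})=\{\Delta'(x_iy_i^b),x_iy_i^{b'}\}+\{x_iy_i^b,\Delta'(x_iy_i^{b'})\}$ for large $b\neq b'$ leaves only $\tfrac{b'-b}{b+b'}C'^{x_i}_{x_i,\,y_i^{d}w'}\,i(y_i^{d}w',x_iy_i^{b+b'})=0$, forcing $d=0$, so $w'\neq1$; choosing $j<i$ and $z_j\in\{x_j,y_j\}$ with $i(z_j,w')\neq0$ and substituting $Z_1=z_j,\ Z_2=x_i,\ u=x_i,\ v=z_jw'$ into (\ref{compatible_condition}) collapses everything to $i(z_j,w')C'^{x_i}_{x_i,w'}=0$, a contradiction. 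Hence $\Delta'(x_i)=0$, and with the vanishing preserved above we get $\Delta'(x_j)=\Delta'(y_j)=0$ for all $j\ge i$, with $\alpha$ as constructed.

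The main obstacle is the middle step: transporting the asymptotic estimates of Lemma~\ref{lem:y0} into the larger algebra --- that is, checking that the inert low-index generators genuinely do not interfere with the leading-term bookkeeping --- and verifying at the single coboundary subtraction that $\alpha$ stays inside $W_1(g)'\otimes W_1(g)'$ and does not reactivate $\Delta$ on $y_i$ or on the generators of index $>i$. Everything else is a transcription of Lemmas~\ref{lem:y0} and~\ref{lem:gdeltay}.
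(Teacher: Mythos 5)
Your overall strategy (restrict the support of $\Delta(x_i)$ to $\langle x_1,y_1,\dots,x_i,y_i\rangle$, rerun the leading-term analysis of Lemma~\ref{lem:y0}, subtract one coboundary, and finish with a low-index generator $z_j$) is the paper's strategy, but the transcription breaks exactly at the point you flag as the "main obstacle": the inert generators of index $<i$ \emph{do} interfere, in two places. First, your trichotomy dismisses the case $i(u_0,y_i)=0$ "because $\Delta(x_i)\in W_1(g)'\otimes W_1(g)'$". In Lemma~\ref{lem:y0} that works because $i(u_0,y)=0$ together with the vanishing of the leading coefficient forces $i(u_0,x)=0$, hence $u_0=1$ in genus one. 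Here the same reasoning only forces $u_0\in\langle x_1,y_1,\dots,x_{i-1},y_{i-1}\rangle$, which for $i\ge 2$ is far from trivial. This case is genuinely possible a priori and the paper spends a substantial part of the proof on it: writing $u_0\otimes v_0=u_1\otimes x_i^sy_i^tv_1$, forcing $s=1$ and $t=0$ by a separate coefficient computation (ruling out $|s|>1$, $s=0$, $s=-1$ one by one), and only then deriving a contradiction from a $z_j$ with $i(z_j,u_1)\neq 0$. Your proposal has no argument for this case.

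Second, your single coboundary subtraction
$\Delta'=\Delta-\sum C^{x_i}_{u,v}\,i(x_i,x_i^{-1}u)^{-1}\,d(x_i^{-1}u\otimes v)$
over all support terms with $\deg_{x_i}u=1$, $u\neq x_i$, divides by zero whenever $x_i^{-1}u\in\langle x_1,\dots,y_{i-1}\rangle$ is nontrivial (then $i(x_i,x_i^{-1}u)=0$, and indeed $d(x_i^{-1}u\otimes v)(x_i)$ cannot produce the term $u\otimes v$ at all, since $\{x_i,x_i^{-1}u\}=0$). So terms $x_iu_1\otimes v$ with $u_1\neq 1$ of lower index cannot be removed this way; the paper's definition of $\Delta'$ deliberately subtracts only over $u=x_iy_i^ku_1$ with $k\neq 0$ (and symmetrically on the right factor), and consequently its surviving leading term is $x_iu_1\otimes x_i^sy_i^tv_1$ with $u_1$ possibly nontrivial, which again has to be killed by the $z_j$ argument rather than by a coboundary. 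Your final paragraph, which assumes the surviving terms are exactly $x_i\otimes y_i^dw'$, therefore starts from an unproved normal form. (A minor additional slip: with the order you chose, which compares the $y_i$-exponent of the left factor first, maximality of $u_0$ does not control $\deg_{x_i}u_0=i(u_0,y_i)$, so the deduction "$i(u_0,y_i)=1$ hence $\deg_{x_i}u\le 1$ on the whole support" needs the order of Lemma~\ref{lem:12}, which compares $x$-exponents first.)
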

\begin{proof}
For $j>i$ and $z_j\in\{x_j, y_j\}$, substituting $Z_1=x_i, Z_2=z_j$ into (\ref{compatible_condition}), we have
$0=i(u, z_j)C^{x_i}_{z_j^{-1}u, v}+i(v, z_j)C^{x_i}_{u, z_j^{-1}v}$ from $\Delta(z_j)=0$.
Therefore $C^{x_i}_{u, v}=0$ if $i(u, z_j)\neq 0$ or $i(v, z_j)\neq 0$.
So we may write
$\Delta(x_i)=\sum_{u, v\in\langle x_1, y_1, \cdots, x_i, y_i\rangle}C^{x_i}_{u, v}u\otimes v$.

Define $\Delta'$ by
\begin{eqnarray*}
\Delta'&=&\Delta-(\sum_{k, l\in\mathbb{Z}, k\neq 0, u_1, v_1\in\langle x_1, y_1, \cdots, x_{i-1}, y_{i-1}\rangle}\frac{C^{x_i}_{x_iy_i^ku_1, y_i^lv_1}}{i(x_i, y_i^ku_1)}d(y_i^ku_1\otimes y_i^lv_1)\\&&+\sum_{l\neq 0\in\mathbb{Z}, u_1, v_1\in\langle x_1, y_1, \cdots, x_{i-1}, y_{i-1}\rangle}\frac{C^{x_i}_{u_1, x_iy_i^lv_1}}{i(x_i, y_i^lv_1)}d(u_1\otimes y_i^lv_1)).
\end{eqnarray*}
Set
$\Delta'(Z)=\sum_{u, v\in\pi_1(S)^{\mathrm{Ab}}}C'^Z_{u, v}u\otimes v$
for every $Z\in\pi_1(S)^{\mathrm{Ab}}$.
Let $u_0\otimes v_0$ be the maximum element satisfying $C'^{x_i}_{u_0, v_0}\neq 0$ in a lexicographic order, that is, $x_1^{a_g}y_1^{b_g}\cdots x_g^{a_1}y_g^{b_1}\otimes x_1^{a_{2g}}y_1^{b_{2g}}\cdots x_g^{a_{g+1}}y_g^{b_{g+1}}<x_1^{a'_g}y_1^{b'_g}\cdots x_g^{a'_1}y_g^{b'_1}\otimes x_1^{a'_{2g}}y_1^{b'_{2g}}\cdots x_g^{a'_{g+1}}y_g^{b'_{g+1}}$ if and only if there exists $j\in\{1, \cdots, 2g\}$ such that $a_k=a'_k$ and $b_k=b'_k$ for all $k<j$ and
\begin{itemize}\item $a_j<a'_j\ \text{or}$\item $a_j=a'_j\ \text{and}\ b_j<b'_j.$\end{itemize}
For $b\neq b'$, considering the maximum term in $i(x_iy_i^b, x_iy_i^{b'})\Delta'(x_i^2y_i^{b+b'})=\{\Delta'(x_iy_i^b), x_iy_i^{b'}\}+\{x_iy_i^b, \Delta'(x_iy_i^{b'})\}$, we have
\begin{eqnarray}
&&\frac{b'-b}{b+b'}C'^{x_i}_{u_0, v_0}(i(u_0, x_iy_i^{b+b'})+i(u_0, y_i)^{b+b'}i(x_i, y_i^{b+b'}u_0))\nonumber\\
&=&C'^{x_i}_{u_0, v_0}(i(u_0, y_i)^bi(y_i^bu_0, x_iy_i^{b'})+i(u_0, y_i)^{b'}i(x_iy_i^b, y_i^{b'}u_0)).\label{eq:maximal_term_g}
\end{eqnarray}
By a similar argument to a proof of Lemma \ref{lem:y0}, we obtain $i(u_0, y_i)=0$ or $i(u_0, y_i)=1$.

If $i(u_0, y_i)=0$, we have $i(u_0, x_i)=0$ by (\ref{eq:maximal_term_g}).
So we can write $u_0\otimes v_0=u_1\otimes x_i^sy_i^tv_1$, where $u_1, v_1\in\langle x_1, y_1, \cdots, x_{i-1}, y_{i-1}\rangle\subset\pi_1(S)^{\mathrm{Ab}}$, $s, t\in\mathbb{Z}$.

Considering the coefficient of $u_1\otimes x_i^{s+1}y_i^{t+b+b'}v_1$ in
$i(x_iy_i^{b}, x_iy_i^{b'})\Delta'(x_i^2y_i^{b+b'})=\{\Delta'(x_iy_i^{b}), x_iy_i^{b'}\}+\{x_iy_i^{b}, \Delta'(x_iy_i^{b'})\}$,
we obtain $s=1$.
Namely, for $b>0$, the maximum term in
\begin{eqnarray*}
\Delta'(x_iy_i^{b})&=&\{\cdots\{\{\Delta'(x_i), \underbrace{y_i\}, y_i\}, \cdots, y_i}_{b}\}\\
&=&\sum_{u, v\in\pi_1(S)^{\mathrm{Ab}}}C'^{x_i}_{u, v}\sum_{k, l\geq 0, k+l=b}i(u, y_i)^ki(v, y_i)^l\begin{pmatrix}b\\ k\end{pmatrix}y_i^ku\otimes y_i^lv
\end{eqnarray*}
is $u_1\otimes x_i^sy_i^{t+b}v_1$ and its coefficient is
$s^{b}C'^{x_i}_{u_1, x_i^sy_i^tv_1}$.
The maximum term in
$b\Delta'(x_i^2y_i^{b})=\{\Delta'(x_i), x_iy_i^{b}\}+\{x_i, \Delta'(x_iy_i^{b})\}$
is $u_1\otimes x_i^{s+1}y_i^tv_1$ and its coefficient is
$(i(x_i^sy_i^tv_1, x_iy_i^{b})+s^{b}i(x_i, y_i^t))C'^{x_i}_{u_1, x_1^sy_i^tv_1}$.
Hence the coefficient of $u_1\otimes x_i^{s+1}y_i^{t+b+b'}v_1$ in
$i(x_iy_i^{b}, x_iy_i^{b'})\Delta'(x_i^2y_i^{b+b'})=\{\Delta'(x_iy_i^{b}), x_iy_i^{b'}\}+\{x_iy_i^{b}, \Delta'(x_iy_i^{b'})\}$
is
\[
\frac{b'-b}{b+b'}C'^{x_i}_{u_1, x_i^sy_i^tv_1}(i(x_i^sy_i^t, x_iy_i^{b+b'})+s^{b+b'}t)
=C'^{x_i}_{u_1, x_i^sy_i^tv_1}(s^{b}i(x_i^sy_i^t, x_iy_i^{b'})+s^{b'}i(x_iy_i^{b}, x_i^sy_i^t))
\]
for $b, b'>0$.
Therefore $|s|<1$.
If $s=0$, we have $t=0$ from the above equation.
This contradicts the assumption $C'^{x_i}_{u_0, v_0}\neq 0$.
If $s=-1$, when $b$ is odd and $b'$ is even,
\begin{eqnarray*}
(i(x_i^sy_i^t, x_iy_i^{b+b'})+s^{b+b'}t)&=&-b-b'-2t,\\
s^{b}i(x_i^sy_i^t, x_iy_i^{b'})+s^{b'}i(x_iy_i^{b}, x_i^sy_i^t)&=&b+b'+2t.
\end{eqnarray*}
This contradicts the above equation.
Hence $s=1$.
Since $C'^{x_i}_{u_1, x_iy_i^tv_1}\neq 0$, we have $t=0$ by the definition of $\Delta'$.
Since $u_1\neq 1$, there exist $j<i$ and $z_j\in\{x_j, y_j\}$ such that $i(z_j, u_1)\neq 0$.
Substituting $Z_1=z_j$, $Z_2=x_i$, $u=z_ju_1$ and $v=x_iv_1$ into (\ref{compatible_condition}), we have
\begin{align*}
0&=i(z_ju_1, x_i)C'^{z_j}_{x_i^{-1}z_ju_1, x_iv_1}+i(x_iv_1, x_i)C'^{z_j}_{z_ju_1, v_1}+i(z_j, z_ju_1)C'^{x_i}_{u_1, x_iv_1}+i(z_j, x_iv_1)C'^{x_i}_{z_ju_1, z_j^{-1}x_iv_1}\\
&=i(z_j, u_1)C'^{x_i}_{u_1, x_iv_1}+i(z_j, v_1)C'^{x_i}_{z_ju_1, z_j^{-1}x_iv_1}.
\end{align*}
Since $i(z_j, u_1)\neq 0$, we obtain $C'^{x_i}_{u_1, x_iv_1}=0$. This contradicts the assumption $C'^{x_i}_{u_1, x_iv_1}=C'^{x_i}_{u_0, v_0}\neq 0$.

If $i(u_0, y_i)=1$, we can assume
$u_0\otimes v_0=x_iy_i^lu_1\otimes x_i^sy_i^tv_1$
for some $u_1, v_1\in\langle x_1, y_1, \cdots, x_{i-1}, y_{i-1}\rangle\subset\pi_1(S)^{\mathrm{Ab}}$ and $l, s, t\in\mathbb{Z}$. By a similar argument to the proof of Lemma \ref{lem:y0},
we have $C'^{x_i}_{u, v}=0$ if $i(y_i, u)\neq 0$ and $i(y_i, v)\neq 0$.
Hence $s=0$. By the definition of $\Delta'$, we have $l=0$.
If $t=0$, there exist $j<i$ and $z_j\in\{x_j, y_j\}$ such that $i(z_j, v_1)\neq 0$.
Substituting $Z_1=z_j$, $Z_2=x_i$, $u=x_iu_1$ and $v=z_jv_1$ into (\ref{compatible_condition}), we have
\begin{align*}
0&=i(x_iu_1, x_i)C'^{z_j}_{u_1, z_jv_1}+i(z_jv_1 x_i)C'^{z_j}_{x_iu_1, x_i^{-1}z_jv_1}+i(z_j, x_iu_1)C'^{x_i}_{z_j^{-1}x_iu_1, z_jv_1}+i(z_j, z_jv_1)C'^{x_i}_{x_iu_1, v_1}\\
&=i(z_j, u_1)C'^{x_i}_{z_j^{-1}x_iu_1, z_jv_1}+i(z_j, v_1)C'^{x_i}_{x_iu_1, v_1}.
\end{align*}
Since $i(z_j, v_1)\neq 0$, we obtain $C'^{x_i}_{x_iu_1, v_1}=0$. Therefore $t\neq 0$.

For a sufficiently large $b>0$, the coefficient of $x_iu_1\otimes x_iy_i^{t+b}v_1$ in the right-hand side of
$b\Delta'(x_i^2y_i^{b})=\{\Delta'(x_i), x_iy_i^{b}\}+\{x_i, \Delta'(x_iy_i^{b})\}$
is 
\begin{align*}
C'^{x_i}_{y_i^{-b}u_1, x_iy_i^{t+b}v_1}i(y_i^{-b}u_1, x_iy_i^b)+C'^{x_i}_{x_iu_1, y_i^tv_1}i(y_i^tv_1, x_iy_i^{b})+C'^{x_iy_i^b}_{u_1, x_iy_i^{t+b}v_1}i(x_i, u_1)\\+C'^{x_iy_i^b}_{x_iu_1, y_i^{t+b}v_1}i(x_i, y_i^{t+b}v_1)=C'^{x_i}_{x_iu_1, y_i^tv_1}i(y_i^tv_1, x_iy_i^{b}).
\end{align*}
For $b, b'>0$, the coefficient of $x_iu_1\otimes x_iy_i^{t+b+b'}v_1$ in $\{\Delta'(x_iy_i^{b}), x_iy_i^{b'}\}$ is 
\[
C'^{x_iy_i^{b}}_{y_i^{-b'}u_1, x_iy_i^{t+b+b'}v_1}i(y_i^{-b'}u_1, x_i y_i^{b'})+C'^{x_iy_i^b}_{x_iu_1, y_i^{t+b}v_1}i(y_i^{t+b}v_1, x_iy_i^{b'}).
\]
If $-b'<\min_{C'^x_{u, v}\neq 0}\deg_y u, \min_{C'^y_{u, v}\neq 0}\deg_y u$, we have $C'^{x_iy_i^b}_{y_i^{-b'}u_1, _iy_i^{t+b+b'}v_1}=0$.
Since $u_0\otimes v_0=x_iu_1\otimes y_i^t v_1$ is the maximal $u\otimes v$ satisfying $C'^{x_i}_{u, v}$ 
and $C'^{x_i}_{x_iy_i^pu_1, y_i^qv_1}=0$ for all $p\neq 0$, we have $C'^{x_iy_i^b}_{x_iu_1, y_i^{t+b}v_1}=0$.
Therefore, for $b, b'\gg 0$, the coefficient of $x_iu_1\otimes x_iy_i^{t+b+b'}v_1$ in 
$i(x_iy_i^{b}, x_iy_i^{b'})\Delta'(x_i^2y_i^{b+b'})=\{\Delta'(x_iy_i^{b}), x_iy_i^{b'}\}+\{x_iy_i^{b}, \Delta'(x_iy_i^{b'})\}$
is
$\frac{b'-b}{b+b'}C'^{x_i}_{x_iu_1, y_i^tv_1}i(y_i^tv_1, x_iy_i^{b+b'})=0$.
This contradicts $t\neq 0$.
Consequently $\Delta'(x_i)=0$.
This proves Lemma \ref{lem:12}.
\end{proof}
\end{subsection}
\end{section}

\begin{section}{Appendix: The Turaev cobracket represents a non-trivial cohomology class}
In this section we show the Turaev cobracket represents a non-trivial cohomology class in $H^1(\mathbb{Z}[\hat\pi]_0, \mathbb{Z}[\hat\pi]_0\wedge \mathbb{Z}[\hat\pi]_0)$.
\begin{prop}\label{prop:turaev_cobracket}
If $S$ is a closed oriented surface of genus $g\geq 2$, then
$[\Delta_T]\neq 0\in H^1(\mathbb{Z}[\hat\pi]_0, \mathbb{Z}[\hat\pi]_0\wedge\mathbb{Z}[\hat\pi]_0)$, where $\Delta_T$ is the Turaev cobracket on $\mathbb{Z}[\hat\pi]_0$.
\end{prop}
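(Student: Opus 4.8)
The plan is to prove directly that $\Delta_T$ is not a coboundary. By the remark of Drinfel'd recalled in Section 2, $\Delta_T$ is a $1$-cocycle, so it suffices to show $\Delta_T\neq d\xi$ for every $\xi\in\mathbb{Z}[\hat\pi]_0\wedge\mathbb{Z}[\hat\pi]_0$. Fix once and for all a hyperbolic metric on $S$; each nontrivial $\alpha\in\hat\pi$ then has a geodesic length $\ell(\alpha)>0$, and below any given bound only finitely many classes occur.

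The first step is to record the \emph{shape of a coboundary}. Given $\xi$, write it as a finite sum $\xi=\sum_{(a,b)}c_{a,b}\,a\wedge b$ with $a,b\in\hat\pi$, let $B\subset\hat\pi$ be the finite set of all classes occurring among these $a$'s and $b$'s, and set $L=\max\{\ell(\alpha):\alpha\in B,\ \alpha\neq 1\}$. Since the module structure is given by the Goldman bracket,
\[
d\xi(\gamma)=\gamma\cdot\xi=\sum_{(a,b)}c_{a,b}\bigl(\{\gamma,a\}\wedge b+a\wedge\{\gamma,b\}\bigr)
\]
for every $\gamma\in\hat\pi$, and since each $\{\gamma,a\}$ is a $\mathbb{Z}$-linear combination of classes $[\gamma_pa_p]$, the right-hand side is a combination of wedges $u\wedge v$ each of which has $u\in B$ or $v\in B$. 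In particular, in a basis of $\mathbb{Z}[\hat\pi]_0\wedge\mathbb{Z}[\hat\pi]_0$ consisting of wedges of distinct nontrivial classes, the element $d\xi(\gamma)$ has coefficient $0$ on every basis wedge $u\wedge v$ with $\ell(u)>L$ and $\ell(v)>L$.

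The second step is to exhibit a test loop that defeats this. Because $g\geq 2$, the Turaev cobracket is not identically zero, so fix $\gamma_0\in\hat\pi$ with $\Delta_T(\gamma_0)\neq 0$; computing $\Delta_T(\gamma_0)$ from a generic immersed representative, only finitely many (nontrivial) classes appear in its support. Choose a mapping class $\phi$ stretching the lengths of all of them — for instance a pseudo-Anosov, or a power of the Dehn twist along a filling simple closed curve, since such a curve meets every nontrivial class essentially — so that $\ell(\phi^n\alpha)$ grows without bound as $n\to\infty$ for each of these finitely many classes $\alpha$. By naturality of the Turaev cobracket under homeomorphisms, $\Delta_T(\phi^n\gamma_0)=(\phi^n\wedge\phi^n)\Delta_T(\gamma_0)$; this is nonzero because $\phi^n\wedge\phi^n$ is invertible, and it is supported on the basis wedges $\phi^n u\wedge\phi^n v$ for $u\wedge v$ in the support of $\Delta_T(\gamma_0)$. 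For $n$ sufficiently large every such $\phi^n u$ and $\phi^n v$ has length $>L$, so $\Delta_T(\phi^n\gamma_0)$ has a nonzero coefficient on a basis wedge both of whose factors have length $>L$. By the previous step this is impossible for $d\xi(\phi^n\gamma_0)$, so $\Delta_T(\phi^n\gamma_0)\neq d\xi(\phi^n\gamma_0)$, contradicting $\Delta_T=d\xi$. Hence $[\Delta_T]\neq 0$.

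The only delicate point is the construction of the test family: one must arrange that a single mapping class simultaneously lengthens all of the finitely many classes occurring in $\Delta_T(\gamma_0)$ (a filling curve does this), and invoke the equivariance of $\Delta_T$ so that passing to $\phi^n\gamma_0$ introduces no new cancellation. This is also precisely where the hypothesis $g\geq 2$ is used, through the non-vanishing of $\Delta_T$ — on the torus $\Delta_T$ is zero and the statement fails.
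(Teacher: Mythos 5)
Your argument is correct in outline but takes a genuinely different route from the paper. The paper's proof is elementary and local: it tests a putative coboundary $\sum_i C_i\,d(\alpha_i\wedge\beta_i)$ against the single loop $\gamma=x_1x_2^{-1}$, pushes everything down to $W_1(g)\wedge W_1(g)$ via the abelianization map $p\wedge p$, and observes that the coefficient of $\overline{x}_1\wedge\overline{x}_2^{-1}$ coming from a coboundary must vanish because the relevant algebraic intersection numbers $i(x_1x_2^{-1},\gamma\alpha_i)$ are zero, whereas $p\wedge p(\Delta_T(\gamma))=\overline{x}_1\wedge\overline{x}_2^{-1}\neq 0$. Your proof instead exploits the global ``finite support'' constraint on coboundaries (every wedge in $d\xi(\gamma)$ has one leg in the fixed finite set $B$) and defeats it by transporting the support of $\Delta_T(\gamma_0)$ out to infinity with a pseudo-Anosov, using equivariance of $\Delta_T$. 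What your approach buys is independence from the $W_1(g)$ machinery and from any explicit computation of a cobracket beyond knowing $\Delta_T\not\equiv 0$; what it costs is heavier imported machinery (hyperbolic length spectrum, pseudo-Anosov dynamics, naturality of $\Delta_T$), whereas the paper's argument is self-contained and two lines long once $p$ is in hand.

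Three points need tightening. First, ``Dehn twist along a filling simple closed curve'' is vacuous: no simple closed curve fills a closed surface of genus $\geq 2$; drop that alternative and keep the pseudo-Anosov. Second, you use that $\ell(\phi^n\alpha)\to\infty$ for \emph{arbitrary} (possibly non-simple) essential classes $\alpha$ in the support of $\Delta_T(\gamma_0)$; this is true for pseudo-Anosovs (no power fixes any nontrivial conjugacy class, plus discreteness of the length spectrum) but is not the textbook statement for simple closed curves, so either justify it or sidestep it by choosing $\gamma_0=x_1x_2^{-1}$ explicitly, whose cobracket is $\pm\,x_1\wedge x_2^{-1}$ and is supported on simple classes. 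Third, the non-vanishing $\Delta_T(\gamma_0)\neq 0$ is asserted rather than verified; the cleanest verification is exactly the paper's, namely that $p\wedge p(\Delta_T(x_1x_2^{-1}))=\overline{x}_1\wedge\overline{x}_2^{-1}\neq 0$, so your argument does not entirely escape that computation.
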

\begin{proof}
Assume $\Delta_T$ is a coboundary, that is, 
$\Delta_T=\sum_iC_i d(\alpha_i\wedge\beta_i)$, for some $\alpha_i, \beta_i\in\hat\pi$ and $C_i\in\mathbb{Z}$.
Let us define $\gamma:=[x_1x_2^{-1}]\in\hat\pi$ as in Figure \ref{fig:example_x1x2}.
\begin{figure}[htbp]
  \begin{center}
    \includegraphics[width=7cm,clip]{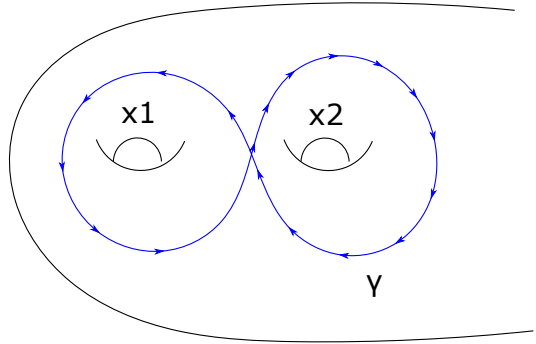}
    \caption{$\gamma=x_1x_2^{-1}$.}
    \label{fig:example_x1x2}
  \end{center}
\end{figure}
We have
\begin{align}
&(\sum_i C_id(\alpha_i\wedge\beta_i))(\gamma)=\sum_iC_i([\gamma, \alpha_i]\wedge\beta_i+\alpha_i\wedge [\gamma, \beta_i])\nonumber\\
&=\sum_iC_i(\sum_{p\in\gamma\cap\alpha_i}\epsilon(p; \gamma, \alpha_i)\gamma_p(\alpha_i)_p\wedge\beta_i+\sum_{w\in\gamma\cap\beta_i}\epsilon(p; \gamma, \beta_i)\alpha_i\wedge\gamma_q(\beta_i)_q),\label{eq:d}
\end{align}
where $[\ ,\ ]$ is the Goldman bracket.
From a natural surjection $\hat\pi\rightarrow\pi_1(S)^{\mathrm{Ab}}, \alpha\mapsto\overline\alpha$, we have a Lie algebra homomorphism
$p:\mathbb{Z}[\hat\pi]_0\rightarrow W_1(g)$.
Applying $p\wedge p$ for (\ref{eq:d}), we have
\begin{align*}
p\wedge p((\sum_i C_id(\alpha_i\wedge\beta_i))(\gamma))
=\sum_iC_i(i(\gamma, \alpha_i)\overline{\gamma\alpha_i}\wedge\overline\beta_i+i(\gamma, \beta_i)\overline \alpha_i\wedge\overline{\gamma\beta_i}).
\end{align*}
If $\overline {\gamma\alpha_i}=\overline x_1$ or $\overline x_2^{-1}$, then we have $i(\gamma, \alpha_i)=i(x_1x_2^{-1}, \gamma\alpha_i)=0$.
Therefore the coefficient of $x_1\wedge x_2^{-1}$ is $0$.
This contradicts
$p\wedge p(\Delta_T(\gamma))=\overline x_1\wedge\overline x_2^{-1}$.
Therefore the Turaev cobracket $\Delta_T$ represents a non-trivial cohomology class.
\end{proof}

\end{section}

\begin{bibdiv}
\begin{biblist}

\bib{AKKN}{article}{
   author={Alekseev, Anton},
   author={Kawazumi, Nariya},
   author={Kuno, Yusuke},
   author={Naef, Florian},
   title={Higher genus Kashiwara-Vergne problems and the Goldman-Turaev Lie
   bialgebra},
   language={English, with English and French summaries},
   journal={C. R. Math. Acad. Sci. Paris},
   volume={355},
   date={2017},
   number={2},
   pages={123--127},
   issn={1631-073X},
   review={\MR{3612698}},
   doi={10.1016/j.crma.2016.12.007},
}

\bib{AB83}{article}{
   author={Atiyah, M. F.},
   author={Bott, R.},
   title={The Yang-Mills equations over Riemann surfaces},
   journal={Philos. Trans. Roy. Soc. London Ser. A},
   volume={308},
   date={1983},
   number={1505},
   pages={523--615},
   issn={0080-4614},
   review={\MR{702806}},
   doi={10.1098/rsta.1983.0017},
}

\bib{CK15}{article}{
   author={Chas, Moira},
   author={Krongold, Fabiana},
   title={Algebraic characterization of simple closed curves via Turaev's
   cobracket},
   journal={J. Topol.},
   volume={9},
   date={2016},
   number={1},
   pages={91--104},
   issn={1753-8416},
   review={\MR{3465841}},
   doi={10.1112/jtopol/jtv036},
}

\bib{Dri}{article}{
   author={Drinfel\cprime d, V. G.},
   title={Hamiltonian structures on Lie groups, Lie bialgebras and the
   geometric meaning of classical Yang-Baxter equations},
   language={Russian},
   journal={Dokl. Akad. Nauk SSSR},
   volume={268},
   date={1983},
   number={2},
   pages={285--287},
   issn={0002-3264},
   review={\MR{688240}},
}

\bib{Gol86}{article}{
   author={Goldman, William M.},
   title={Invariant functions on Lie groups and Hamiltonian flows of surface
   group representations},
   journal={Invent. Math.},
   volume={85},
   date={1986},
   number={2},
   pages={263--302},
   issn={0020-9910},
   review={\MR{846929}},
   doi={10.1007/BF01389091},
}

\bib{Gol84}{article}{
   author={Goldman, William M.},
   title={The symplectic nature of fundamental groups of surfaces},
   journal={Adv. in Math.},
   volume={54},
   date={1984},
   number={2},
   pages={200--225},
   issn={0001-8708},
   review={\MR{762512}},
   doi={10.1016/0001-8708(84)90040-9},
}	

\bib{Tur91}{article}{
   author={Turaev, Vladimir G.},
   title={Skein quantization of Poisson algebras of loops on surfaces},
   journal={Ann. Sci. \'Ecole Norm. Sup. (4)},
   volume={24},
   date={1991},
   number={6},
   pages={635--704},
   issn={0012-9593},
   review={\MR{1142906}},
}
\end{biblist}
\end{bibdiv}

\end{document}